\newtheoremstyle{mystyle}{2ex}{}{}{}{\bfseries}{.}{1ex}{}
\theoremstyle{mystyle} \newtheorem{claim}{Claim}
\theoremstyle{mystyle} 
\theoremstyle{mystyle} \newtheorem{lemma}{Lemma}
\theoremstyle{mystyle} 
\theoremstyle{mystyle} 
\theoremstyle{mystyle} 
\theoremstyle{mystyle} 
\theoremstyle{mystyle} \newtheorem{assump}{Assumption}
\newcommand{\E}{\mathbb{E}}
\newcommand{\Var}{\mathrm{Var}}
\newtheorem{thm}{Theorem}
\newtheorem{observation}{Observation}
\newtheorem{cor}{Corollary}
\newtheorem{defn}{Definition}
\theoremstyle{remark}
\newtheorem{remark}{Remark}
\newtheorem{example}{Example}
\newcommand{\T}{\lfloor c\log n\rfloor}
\begin{document}
\title{Balanced Allocation with Random Walk Based Sampling
	\thanks{The authors would like acknowledge support by NSF via grants AST-1343381, AST-1516075, IIS-1538827 and ECCS-1608361, and also Harsha Honappa and Yang Xiao for early discussions. 
	}}
\author{Dengwang Tang, Vijay G. Subramanian}
\date{}

\maketitle

\begin{abstract}
	In the standard ball-in-bins experiment, a well-known scheme is to sample $d$ bins independently and uniformly at random and put the ball into the least loaded bin. It can be shown that this scheme yields a maximum load of $\log\log n/\log d+O(1)$ with high probability. 
	
	Subsequent work analyzed the model when at each time, $d$ bins are sampled through some correlated or non-uniform way. However, the case when the sampling for different balls are correlated are rarely investigated. In this paper we propose three schemes for the ball-in-bins allocation problem. We assume that there is an underlying $k$-regular graph connecting the bins. The three schemes are variants of \emph{power-of-$d$ choices}, except that the sampling of $d$ bins at each time are based on the locations of $d$ independently moving non-backtracking random walkers, with the positions of the random walkers being reset when certain events occurs. We show that under some conditions for the underlying graph that can be summarized as the graph having large enough girth, all three schemes can perform as well as \emph{power-of-$d$}, so that the maximum load is bounded by $\log\log n/\log d+O(1)$ with high probability.

	\textbf{Keywords}: Load Balancing, Balls-in-Bins, Random Walk, Martingale Concentration Bounds
	
\end{abstract}
	
\section{Introduction}

The model of placing $n$ balls into $n$ bins has been analyzed for decades. Despite its simplicity, this model has a wide range of applications, such as analysis of load balancing in distributed hash tables and distributed load balancing in many server systems. A classical result tells us that if each ball is inserted into a uniform random bin, then the maximum load, i.e. the number of balls in the fullest bin, will be $(1+o(1))\log n/\log \log n$ with high probability \cite{raab1998balls}\cite{mitzenmacher2001power}. Azar et. al. \cite{azar1999balanced} then proposed the following scheme, called \emph{power-of-$d$ choices}: at the time of placing the $i$-th ball, pick $d$ bins \emph{independently and uniformly} at random, compare the load of $d$ bins, and insert the $i$-th ball into the least loaded bins (ties are broken arbitrarily). It was shown that, this scheme yields a maximum load of $\log \log n / \log d + O(1)$ with high probability \cite{azar1999balanced}\cite{mitzenmacher2001power}. This result implies that, with a small amount of random choices, the maximum load can be significantly reduced \cite{richa2001power}.

Many variants of the $d$-random-choice model have been proposed and analyzed. Most of these works focus on alternative ways to sample $d$ bins, either non-uniformly or non-independently. V\"{o}cking \cite{vocking2003asymmetry} analyzed a scheme where $d$ bins are sampled from $d$ disjoint groups of bins respectively, and ties are broken asymmetrically. It was shown that this scheme reduces the maximum load to $\log \log n/(d \log \phi_d)+O(1)$ with high probability, where $1<\phi_d<2$ \cite{vocking2003asymmetry}. Kenthapadi and Panigraphy \cite{kenthapadi2006balanced} analyzed the scheme where $d=2$ bins are sampled through picking a random edge of an underlying graph $G$, where $G$ is assumed to be almost $n^\varepsilon$ regular for some $0<\varepsilon<1$. It was shown that this scheme yields a maximum load of $\log\log n + O(1/\varepsilon)$ \cite{kenthapadi2006balanced}. As a comparison, Azar's model corresponds to the case that $G$ is a complete graph. Godfrey \cite{godfrey2008balls} then generalized the results to the case where an indefinite number of bins are sampled through randomly picking a subset $B_i\subset [n]$ according to some probability measure. It was shown that, when the size of the subset is approximately $\Theta(\log n)$, and for each bin $j\in [n]$, the probability that $j\in B_i$ is at the same order of $(\log n)/n$, the maximum load is $\Theta(1)$ with high probability. A salient feature of the scheme in \cite{godfrey2008balls} is that the subsets of bins can be arbitrarily correlated across balls, and only when $\Theta(\log n)$ bins get sampled is there sufficient spread in the selections. More specifically, the \emph{power-of-$d$ choices} scaling doesn't hold if the number of bins sampled is some finite number $d$. Sampling using a single random walk on a graph has also been utilized to probe bins in balls-in-bins models. Pourmiri \cite{pourmiri2016balanced} proposed and analyzed the scheme where the placement bin for ball $i$ is sampled using minimally loaded bins from a specific subset of locations visited by a \textit{single} non-backtracking random walk of length $l r_G=o(\log(n))$ (where $l=o(\log(n))$ and $r_G\sim \log\log(n)$) on a high girth $d$-regular graph between the bins, which \textit{starts from a uniformly random position} in the graph. It is shown for sparse $d$-regular graphs ($d\in [3, O(\log(n))]$) with high girth that when $l\geq 4\sqrt{\log n /\log k}$, the maximum load is $O(\log \log n/\log(l/\sqrt{\log n /\log k}))$ with high probability. Again a salient feature of \cite{pourmiri2016balanced} is the correlated sampling of bins for each balls, but with independence of the sampling set across balls.

In most of the works above, except for  \cite{godfrey2008balls}, despite the existence of correlation of bins sampled by the same ball, the sets of bins sampled by different balls are still independent. The case where correlations exists across sampling processes for different balls are rarely investigated, with the exception of  \cite{godfrey2008balls} where by sampling a large number of nodes, the dependence doesn't matter as much. Alon et al. \cite{alon2007non} investigated the maximum number of times a vertex $j$ is visited by a non-backtracking random walk of length $n$ on a $k$-regular graph $G$ with $n$-vertices. In the language of balls-in-bins experiment, this model is equivalent to inserting $n$ balls to $n$ bins based on one non-backtracking random walk of length $n$. It was shown that, when $G$ is an expander graph with high girth, the maximum load is $(1+o(1))\log n/\log \log n$ with high probability \cite{alon2007non}, which coincides with the result when each ball chooses a independent uniform random position. Alon et al. \cite{alon2007non} then raised the following open problem: 
\begin{displayquote}
	\textit{... Let $W_1$ and $W_2$ denote two non-backtracking random walks on  an expander of high girth, and suppose that in each step we are given a choice between the two current locations of $W_1$ and $W_2$, and pick the least loaded one. Does the maximal load decrease from $\Theta(\frac{\log n}{\log\log n})$ to $\Theta(\log\log n)$ in this setting as well? ...
	}
\end{displayquote}
This problem can also be considered in the setting in \cite{godfrey2008balls}: \textit{Can some dependent selection (across balls) of just a finite number of bins per ball still yield power-of-d choices performance?}

In the computer science literature, random walks on expander graphs are utilized to reduce the number of random bits used by a randomized algorithm to achieve a certain objective on error probability, or to reduce error probability under the same randomness budget. This procedure is referred to as \emph{derandomization}. The importance of reducing random bits lies in the fact that random bits, just like memory, are important resources in a computer. See \cite{hoory2006expander} for a survey on this topic.

\textbf{\textit{Overview of Results:}} Motivated by Alon's open problem and the literature on derandomization, we seek to modify the \emph{power-of-$d$ choices} scheme by replacing independent and uniformly random sampling with non-backtracking random walks based sampling. We assume that the bins are interconnected through a $k$-regular graph $G$ that remains fixed. The $d$ bins used at each time will be the positions of $d$ non-backtracking random walkers $W_1, W_2,\cdots, W_d$ on a graph $G$. The random walkers are periodically reset to independent uniformly random vertices. We describe and analyze three schemes. For the first scheme, the random walkers are reset if either two random walkers' path (since last reset) intersects, or a reset has not taken place for $\lfloor c\log n\rfloor$ timesteps. For the second scheme, the random walkers reset only every $\lfloor c\log n\rfloor$ timesteps. For the third scheme, we completely eliminate the resets. Under certain assumptions on the graph $G$, such as large girth, we show that both schemes can achieve the same performance as \emph{power-of-$d$ choices} scheme, that is, the maximum load of all bins is bounded by $\log\log n/\log d+\Theta(1)$ with high probability. Our result can be interpreted as a derandomization of the \emph{power-of-$d$ choices} scheme. Our proof uses the basic iterated bounding technique outlined in \cite{azar1999balanced} but with martingale concentration inequalities to account for the dependence. Finally, as uniformly random sampling is used in many other contexts such as peer-selection in peer-to-peer communications~\cite{hajek2011missing}, 
our broader goal is to study derandomization of such procedures in many other applications. 


\subsection{Large Deviation Inequalities}

We will use a corollary of Bernstein Inequality for martingales, which we shall cite here for completeness.

\begin{thm}[Bernstein]
	If a martingale $(\mathbf{X}, \mathcal{F})$ satisfies \cite{chung2006concentration}
	\begin{equation}
	\Var[X_i|\mathcal{F}_{i-1}] \leq \sigma_i^2~~~~~~\text{a.s. }
	\end{equation}
	for $1\leq i\leq n$, and
	\begin{equation}
	|X_i - \E[X_i|\mathcal{F}_{i-1}]| \leq B~~~~~~\text{a.s. }
	\end{equation}
	for $1\leq i\leq n$. Then we have
	\begin{equation}
	\Pr(X_n\geq X_0 + \lambda) \leq \exp\left(-\dfrac{\lambda^2}{2\left(\sum_{i=1}^n \sigma_i^2  + \frac{B\lambda }{3} \right)} \right)
	\end{equation}
\end{thm}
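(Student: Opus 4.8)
The plan is to run the standard Chernoff / exponential-moment argument adapted to the martingale setting. Write $Y_i := X_i - X_{i-1}$ for the martingale differences; since $(\mathbf{X},\mathcal{F})$ is a martingale we have $\E[Y_i \mid \mathcal{F}_{i-1}] = 0$, while the hypotheses give $|Y_i| \le B$ and $\E[Y_i^2 \mid \mathcal{F}_{i-1}] = \Var[X_i \mid \mathcal{F}_{i-1}] \le \sigma_i^2$ almost surely. For any $t > 0$, Markov's inequality applied to $e^{t(X_n - X_0)}$ reduces the problem to controlling the exponential moment $\E[e^{t \sum_{i=1}^n Y_i}]$, which I would peel off one factor at a time by successively conditioning on $\mathcal{F}_{n-1}, \mathcal{F}_{n-2}, \dots$

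The key one-step estimate is the conditional bound $\E[e^{tY_i} \mid \mathcal{F}_{i-1}] \le \exp\left( \tfrac12 t^2 \sigma_i^2 / (1 - tB/3) \right)$, valid for $0 < t < 3/B$. To obtain it I would expand $e^{tY_i} = 1 + tY_i + \sum_{k \ge 2} (tY_i)^k / k!$, take conditional expectations term by term (legitimate since $|Y_i| \le B$), use $\E[Y_i \mid \mathcal{F}_{i-1}] = 0$ to kill the linear term, and bound $\E[|Y_i|^k \mid \mathcal{F}_{i-1}] \le B^{k-2} \E[Y_i^2 \mid \mathcal{F}_{i-1}] \le B^{k-2} \sigma_i^2$ for $k \ge 2$. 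This gives $\E[e^{tY_i} \mid \mathcal{F}_{i-1}] \le 1 + (\sigma_i^2 / B^2)(e^{tB} - 1 - tB) \le \exp\left( (\sigma_i^2/B^2)(e^{tB} - 1 - tB) \right)$, after which the elementary inequality $e^x - 1 - x \le \tfrac12 x^2/(1 - x/3)$ for $0 \le x < 3$ (proved term by term from $k! \ge 2 \cdot 3^{k-2}$) yields the claimed form.

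Iterating over $i = n, n-1, \dots, 1$ converts the product of conditional bounds into $\E[e^{t(X_n - X_0)}] \le \exp\left( \tfrac12 t^2 S / (1 - tB/3) \right)$ with $S := \sum_{i=1}^n \sigma_i^2$, so that $\Pr(X_n \ge X_0 + \lambda) \le \exp\left( -t\lambda + \tfrac12 t^2 S/(1 - tB/3) \right)$ for every $t \in (0, 3/B)$. Finally I would insert the near-optimal choice $t = \lambda / (S + B\lambda/3)$, which indeed satisfies $tB < 3$; then $1 - tB/3 = S/(S + B\lambda/3)$ and the exponent simplifies to $-\lambda^2 / (2(S + B\lambda/3))$, which is exactly the stated bound.

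This is a textbook derivation with no deep obstacle, but two points need care: justifying the interchange of conditional expectation with the infinite series together with the moment bound $\E[|Y_i|^k \mid \mathcal{F}_{i-1}] \le B^{k-2}\sigma_i^2$; and verifying that the optimizing $t$ remains inside the admissible window $(0, 3/B)$, so that the geometric-series estimate on $e^x - 1 - x$ applies. The back-substitution of $t$ is the only genuinely computational step.
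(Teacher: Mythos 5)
Your proposal is correct: the conditional exponential-moment bound $\E[e^{tY_i}\mid\mathcal{F}_{i-1}]\le\exp\bigl(\tfrac12 t^2\sigma_i^2/(1-tB/3)\bigr)$, the telescoping over the filtration, and the choice $t=\lambda/(S+B\lambda/3)$ all check out, and this is precisely the standard derivation. The paper does not prove this theorem at all --- it is quoted from the cited reference (Chung--Lu) --- and your argument is the same one used there, so there is nothing further to compare.
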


We will also use Markov Inequality!

\begin{cor}\label{berncor}
	Let $\{Z_j\}_{j=1}^\infty$ be an adapted process w.r.t. the filtration $\{\mathcal{F}_j\}_{j=1}^\infty$. If
	\begin{equation*}
	0\leq Z_j\leq B~~~~~~~~a.s.
	\end{equation*}
	and
	\begin{equation}
	\E[Z_j~|~\mathcal{F}_{j-1}]\leq m~~~~~~~~a.s.
	\end{equation}
	then for any $\lambda\geq 2Nm$, we have
	\begin{equation*}
	\Pr\left(\sum_{j=1}^{N} Z_j \geq \lambda\right)\leq \exp\left(-\dfrac{3\lambda}{16B}\right)
	\end{equation*}
	
\end{cor}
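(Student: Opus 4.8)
The plan is to reduce the statement to the Bernstein inequality for martingales by centering the $Z_j$. First I would introduce the martingale $X_0 = 0$ and $X_i = \sum_{j=1}^i \bigl(Z_j - \E[Z_j \mid \mathcal{F}_{j-1}]\bigr)$ for $1 \le i \le N$, which is adapted to $\{\mathcal{F}_i\}$ and whose increments satisfy $X_i - \E[X_i \mid \mathcal{F}_{i-1}] = Z_i - \E[Z_i \mid \mathcal{F}_{i-1}]$. Since $0 \le Z_i \le B$ forces $\E[Z_i \mid \mathcal{F}_{i-1}]$ into $[0,B]$ as well, these increments lie in $[-B,B]$, which is the second hypothesis of Bernstein with constant $B$. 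For the conditional-variance hypothesis I would estimate $\Var[X_i \mid \mathcal{F}_{i-1}] = \Var[Z_i \mid \mathcal{F}_{i-1}] \le \E[Z_i^2 \mid \mathcal{F}_{i-1}] \le B\,\E[Z_i \mid \mathcal{F}_{i-1}] \le Bm$, using $Z_i \le B$ in the next-to-last step; so one may take $\sigma_i^2 = Bm$, whence $\sum_{i=1}^N \sigma_i^2 = NBm$.

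The next step is to relate the target event to a deviation of $X_N$. Because $\sum_{j=1}^N \E[Z_j \mid \mathcal{F}_{j-1}] \le Nm$ almost surely, on the event $\bigl\{\sum_{j=1}^N Z_j \ge \lambda\bigr\}$ we have $X_N \ge \lambda - Nm$ pointwise. Setting $t := \lambda - Nm$ and invoking $\lambda \ge 2Nm$, which gives $Nm \le \lambda/2$ and hence both $t \ge \lambda/2$ and $Nm \le t$, Bernstein yields
\begin{equation}
\Pr\!\left(\sum_{j=1}^N Z_j \ge \lambda\right) \;\le\; \Pr(X_N \ge X_0 + t) \;\le\; \exp\!\left(-\frac{t^2}{2\bigl(NBm + \tfrac{Bt}{3}\bigr)}\right).
\end{equation}
To close the argument I would use $NBm \le Bt$ to bound the denominator by $\tfrac{8Bt}{3}$, turning the exponent into at most $-\tfrac{3t}{8B}$, and then $t \ge \lambda/2$ to reach $-\tfrac{3\lambda}{16B}$, the claimed bound.

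I do not expect a genuine obstacle; this is essentially a packaging lemma. The two places that need a little care are that $\E[Z_j \mid \mathcal{F}_{j-1}] \le m$ is an almost-sure inequality, so the passage from $\sum_j Z_j \ge \lambda$ to $X_N \ge \lambda - Nm$ must be made on the event rather than in expectation, and that the specific constant $\tfrac{3}{16}$ comes from combining $Nm \le t$ (to absorb $NBm$ into the $Bt/3$ term) with the factor-$2$ loss in $t \ge \lambda/2$; one could instead keep $t = \lambda - Nm$ exact for a sharper constant, but the stated form is the convenient one for later use.
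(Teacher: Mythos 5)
Your proposal is correct and follows essentially the same route as the paper: center the $Z_j$ to form the martingale, bound the increments by $B$ and the conditional variances by $Bm$, and apply Bernstein with the condition $\lambda \ge 2Nm$ used to absorb the $Nm$ shift. The only cosmetic difference is that you keep the deviation $t=\lambda-Nm$ exact and only pass to $t\ge\lambda/2$ at the end, whereas the paper replaces the threshold by $\lambda/2$ immediately; both yield the same constant $3/(16B)$.
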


\begin{proof}
	The process $S_j = \sum_{l=1}^j (Z_j - \E[Z_j~|~\mathcal{F}_{j-1}])$ is a martingale with respect to $\{F_j\}_{j=1}^\infty$, and we have
	\begin{equation}\label{asbound}
	S_{N} = \sum_{l=1}^{N} Z_l - \sum_{l=1}^N \E[Z_l~|~\mathcal{F}_{l-1}] \geq \sum_{l=1}^{N} Z_l - Nm  ~~~~~~~~a.s.
	\end{equation}
	
	It is also clear that
	\begin{equation}
	|S_j-S_{j-1}|\leq B~~~~~~~~a.s.
	\end{equation}
	
	We can bound the conditional variances of the martingale $S_j$ through
	\begin{align*}
	\Var[S_j~|~\mathcal{F}_{j-1}] &= \Var[Z_j~|~\mathcal{F}_{j-1}] \leq \E[Z_j^2~|~\mathcal{F}_{j-1}]\\
	&\leq B\E[Z_j~|~\mathcal{F}_{j-1}] \leq Bm~~~~~~~~a.s.
	\end{align*}
	
	Thus, by \eqref{asbound} and Bernstein Inequality we have
	\begin{align*}
	\Pr\left(\sum_{j=1}^{N} Z_j \geq \lambda\right)&\leq \Pr\left(\sum_{j=1}^{N} Z_j - Nm \geq \dfrac{\lambda}{2}\right)\leq \Pr\left(S_{N} \geq \dfrac{\lambda}{2}\right)\\
	&\leq \exp\left(-\dfrac{(\frac{\lambda}{2})^2}{2\left[\sum_{j=1}^N Bm + \frac{B}{3}\frac{\lambda}{2}\right]}\right)=\exp\left(-\dfrac{(\frac{\lambda}{2})^2}{2\left[BNm + \frac{B}{3}\frac{\lambda}{2}\right]}\right)\\
	&\leq\exp\left(-\dfrac{\frac{1}{4}\lambda^2}{2\left[ \frac{B\lambda}{2} + \frac{B\lambda}{6}\right]}\right)=\exp\left(-\dfrac{3\lambda}{16B}\right)
	\end{align*}
	
\end{proof}

\section{The First Scheme: Model}
The proposed scheme for allocating $n$ balls into $n$ bins is as follows: Consider a connected $d$-regular graph $G=(V, E)$ with $|V| = n$. We associate bins with distinct vertices on the graph. Let $W_1(t), W_2(t)$ be processes on the graph which we will define later. At each time $t=0,1,\cdots, n-1$, a ball is inserted into the least loaded bin of $W_1(t)$ and $W_2(t)$, where ties are broken arbitrarily (but not randomly).

$W_1(t), W_2(t)$ are defined as follows: Starting from independent uniform random vertices, two random walkers perform non-backtracking random walks independently on the graph. At an time, if either two random walkers' path intersects\footnote{At least one walker arrives at a node which was visited in the past by one of the walkers.}, or a restart has not taken place for $\lfloor c\log n\rfloor$ timestamps, then a ``restart" happens, i.e. the two random walkers are reset to independent uniform vertices on the graph. The pathes of the two walkers are reset. The above process is repeated until time $n$. 

\subsection{Model Setup}
Let $\mathbf{Q}(t) = (Q_1(t), \cdots, Q_n(t))$ be the bin load vector at time $t$, where $t=0,1,\cdots$. We assume that the system is initially empty, i.e. $\mathbf{Q}(0) = \mathbf{0}$. For $0\leq t< n$, we have
\begin{equation}
\begin{split}
\mathbf{Q}(t+1) = \begin{cases}
\mathbf{Q}(t) + e_{W_1(t)}&\text{if }Q_{W_1(t)}(t)< Q_{W_2(t)}(t)\\
\mathbf{Q}(t) + e_{W_2(t)}&\text{if }Q_{W_1(t)}(t)> Q_{W_2(t)}(t)\\
\text{either one of the above expressions}&\text{if }Q_{W_1(t)}(t)= Q_{W_2(t)}(t)
\end{cases}
\end{split}
\end{equation}

We also define $\mathbf{Q}(t) = \mathbf{Q}(n)$ for all $t\geq n$.

Mathematically $W_1(t), W_2(t)$ can be defined as follows: For each vertex $i\in V$, we associate the $d$ neighbors of $i$ with a strict ordering. Let $N(i)$ denote the set of neighbors of $i$. For $l\in [k-1]$, define $\tilde{T}(i, j, l)$ to be the $l$-th ranked neighbor of $i$ in $N(i)\backslash\{j\}$. For each directed edge $e=(j, i)\in E$, we define $T(e, l) := (T_1(e, l), T_2(e, l)) := (i, \tilde{T}(i, j, l))$.

Let $D_1(t), D_2(t), t=1,2,\cdots$ be i.i.d. uniform random variables on $\{1,\cdots, k-1\}$. Let $R_1(t), R_2(t), t=0,1,2,\cdots$ be i.i.d. uniform random directed edges. $D_j(s), R_j(t), j=1,2,s=1,2,\cdots, t=0,1,2,\cdots$ are assumed to be mutually independent.

Let $\Psi_1(t), \Psi_2(t)\in E$, $\mathbf{M}(t)\in \{0, 1\}^n$, $C(t)\in\mathbb{Z}_+$. We now define $\Psi_j(0) = R_j(0), j=1,2$, $\mathbf{M}(0) = e_{R_{12}(0)} \vee e_{R_{22}(0)} $, $C(0) = 0$. For $t\geq 1$,

\begin{align}
J(t)&:=\begin{cases}
1&\text{if }M_{T_2(\Psi_j(t-1), D_j(t))}=1~\text{for some }j=1,2~\text{or~}C(t-1) = \lfloor c\log n \rfloor-1\\
0&\text{otherwise}
\end{cases}\\
\Psi_j(t) &= \begin{cases}
T(\Psi_j(t-1), D_j(t))&\text{if } J(t) = 0 \\
R_j(t)&\text{if }J(t) = 1
\end{cases}\\
\textbf{M}(t)&=\begin{cases}
\textbf{M}(t-1) \vee e_{\Psi_{12}(t) } \vee e_{\Psi_{22}(t) } &\text{if~} J(t) = 0\\
e_{R_{12}(t)} \vee e_{R_{22}(t)}&\text{if~}J(t)=1
\end{cases}\\
C(t)&=\begin{cases}
C(t-1) + 1&\text{if }J(t) = 0\\
0&\text{if }J(t) = 1
\end{cases}
\end{align}

Finally we define $W_j(t):=\Psi_{j2}(t)$ (i.e. the end vertex of the directed edge $\Psi_j(t)$) for $j=1,2,t=0,1,2,\cdots$.

Define the filtration 
\begin{equation}
\mathcal{F}_t:=\sigma(\{D_1(s), D_2(s) \}_{s=1}^t, \{R_1(s), R_1(s) \}_{s=0}^{t-1} ),~~~~~~t=0,1,\cdots
\end{equation} 

It is immediate that the random walker positions $\{W_1(s), W_2(s)\}_{s=0}^{t-1}$ are measurable with respect to $\mathcal{F}_t$. As a consequence, the queue length vector $\mathbf{Q}(t)$ is measurable with respect to $\mathcal{F}_t$.

\section{The First Scheme: Main Results}\label{sec:main-results}
\begin{thm}
	Under Condition \eqref{nllrst} specified in Assumption \ref{assump: 1} which we will specify later, the maximum load achieved by the proposed scheme is less than $\log_2\log n + \Theta(1)$ with high probability.
\end{thm}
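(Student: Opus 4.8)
The plan is to run the layered-induction argument of~\cite{azar1999balanced} with the independent-binomial domination at each layer replaced by a martingale estimate organized around ``reset epochs''. For $i\ge 0$ and $0\le t\le n$ let $\nu_i(t)$ be the number of bins holding at least $i$ balls at time $t$; loads only increase, so $\nu_i(\cdot)$ is nondecreasing and $\nu_i(t)\le\nu_i(n)$ for all $t$. As in the classical argument, the ball inserted at time $t$ comes to rest on top of a pile of height $\ge i$ only if \emph{both} probed bins $W_1(t),W_2(t)$ already hold $\ge i$ balls, so $\nu_{i+1}(n)$ is at most the number of times $t$ at which $W_1(t)$ and $W_2(t)$ both lie in the current heavy set $B^{(i)}_t=\{\text{bins with load}\ge i\text{ at time }t\}$, where $|B^{(i)}_t|\le\nu_i(t)\le\nu_i(n)$. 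The sole difficulty relative to the classical proof is that the probed pairs $\{W_1(t),W_2(t)\}$ are no longer independent across $t$.

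Three facts, all furnished by the large-girth hypothesis of Assumption~\ref{assump: 1} together with the reset rule, control this dependence. (a) A single non-backtracking step is a $(k-1)$-to-one map on directed edges all of whose fibres have exactly $k-1$ elements, hence it preserves the uniform law on directed edges; since a reset sends a walker to the head of a uniformly random directed edge, which by regularity is a uniformly random vertex, every $W_j(t)$ is \emph{marginally uniform} on $V$. (b) Between two consecutive resets (an \emph{epoch}) the two walkers are driven by disjoint blocks of the i.i.d.\ innovations, so up to the reset that ends the epoch they are two \emph{independent} non-backtracking walks from independent uniform starts; since the girth exceeds $2\T$ neither can revisit a vertex during an epoch, and the reset fires precisely when one walker would step onto a vertex already visited by either walker, so within an epoch the two trajectories are vertex-disjoint simple paths. (c) Consequently a ball placed during an epoch lands on a bin that no walker will visit again before the epoch ends, which means that for every time $t$ in the $j$-th epoch the positions $W_1(t),W_2(t)$ have load $\ge i$ at time $t$ if and only if they lie in the \emph{frozen} heavy set $B^{(i)}_{\tau_j}$ taken at the epoch's start time $\tau_j$ --- and $B^{(i)}_{\tau_j}$ is measurable with respect to the past $\mathcal G_{j-1}$ of epoch $j$.

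The induction now goes through. Index the epochs $j=1,2,\dots$, let $\mathcal G_{j-1}$ be the history up to the start of epoch $j$, and let $Z_j$ be the number of balls placed during epoch $j$ on top of height $\ge i$, multiplied by $\1\{\nu_i(\tau_j)\le\beta_i\}$; then $0\le Z_j\le\T$, and combining (a)--(c) with $|B^{(i)}_{\tau_j}|\le\beta_i$ on the truncation event yields $\E[Z_j\mid\mathcal G_{j-1}]\le\T(\beta_i/n)^2$. Moreover the number of epochs is $O(n/\T)$ with high probability: at most $\lceil n/\T\rceil$ of them end in a forced reset, and the number ending in a collision is stochastically dominated by $\mathrm{Binom}(n,\T^2/n)$ --- each epoch collides with conditional probability at most $\T^2/n$, by a union bound over the $O(\T^2)$ relevant pairs of times using the independence and uniformity in (a),(b) --- which is $O(\T^2)=o(n/\T)$ with high probability. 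Fixing $N=\lceil 2n/\T\rceil$, setting $Z_j=0$ for $j$ beyond the last epoch, and noting $\sum_{j\le N}Z_j\ge\nu_{i+1}(n)$ on the intersection of $\{\nu_i(n)\le\beta_i\}$ with the event that there are at most $N$ epochs, Corollary~\ref{berncor} with $B=\T$ and $m=\T(\beta_i/n)^2$ (so $2Nm=O(\beta_i^2/n)$) gives $\nu_{i+1}(n)\le\beta_{i+1}:=\max\!\big(c_1\beta_i^2/n,\,c_2\T\log n\big)$ except with probability $O(n^{-2})$, for absolute constants $c_1,c_2$. Starting from the deterministic pigeonhole bound $\nu_{i_0}(n)\le n/i_0$ with $i_0=O(1)$ chosen so that $c_1/i_0<1$, the recursion $\beta_{i+1}=c_1\beta_i^2/n$ contracts doubly exponentially and reaches its floor $c_2\T\log n=\mathrm{polylog}(n)$ after $\log_2\log n+O(1)$ further levels; a union bound over these $O(\log\log n)$ levels costs $o(1)$. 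Finally, at the level $i^*=\log_2\log n+O(1)$ where $\nu_{i^*}(n)=O(\T\log n)$ with high probability, the same per-epoch estimate gives $\E\big[\sum_j Z_j\big]=O\!\big((\T\log n)^2/n\big)=o(1)$, so Markov's inequality shows that with high probability no ball is ever placed on top of height $\ge i^*$; hence the maximum load is at most $i^*=\log_2\log n+\Theta(1)$ with high probability.

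The heart of the matter, and the point where scheme~1's reset-on-intersection really earns its keep, is (c): one must verify from the update rules that a reset fires at the very step at which a walker would land on a vertex previously visited by either walker, so that no ball placed earlier in the same epoch can have raised the load of a vertex the walkers reach later --- this is a short but necessary case analysis, and it is what makes the heavy set seen within an epoch constant and $\mathcal G_{j-1}$-measurable. The rest is routine: arranging the truncation so that Corollary~\ref{berncor} applies without conditioning on a global good event, the elementary large-deviation and pigeonhole bounds on the number of epochs, and calibrating $c_1,c_2,i_0$ so that the doubly-exponential recursion sheds only $O(1)$ levels. Note that it is exactly the $O(n/\T)$ bound on the number of epochs that cancels the per-epoch factor $\T$; without it each layer would carry a spurious factor $\T$ and the bound would deteriorate to $\log_2\log n+\Theta(\log\log\log n)$.
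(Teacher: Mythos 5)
Your overall architecture is the paper's: freeze the heavy set at the start of each reset epoch, use the reset-on-intersection rule to argue that within an epoch the walkers only land on fresh vertices (so the loads they see are the frozen, $\mathcal{G}_{j-1}$-measurable ones), bound $\E[Z_j\mid\mathcal{G}_{j-1}]\le\T(\beta_i/n)^2$ via the coupling with un-reset independent uniform walkers, and run the Azar-style layered induction with Corollary \ref{berncor} plus a final Markov step. That part matches the paper's key lemma and Section \ref{sec:main-results} essentially line for line, including the floor $\Theta(\T\log n)$ for the recursion and the $o(1)$ Markov estimate at the top level.

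The gap is in your bound on the number of epochs. You assert that each epoch ends in a collision with conditional probability at most $\T^2/n$ "by a union bound over the $O(\T^2)$ relevant pairs of times", and earlier that "the girth exceeds $2\T$". Neither is available: Assumption \ref{assump: 1} is only condition \eqref{nllrst}, i.e. $\Pr(T_1<\T)\le 0.1$, and the paper's remark observes that girth $>6\log_{k-1}\log n$ --- doubly logarithmic, far below $2\T=2\lfloor c\log n\rfloor$ --- already suffices for it. Under \eqref{nllrst} alone a walker may revisit a vertex by traversing a short cycle, the per-epoch collision probability can be a constant as large as $0.1$, and the number of collision-terminated epochs can be $\Theta(n/\T)$ rather than your $O(\T^2)$; the $\mathrm{Binom}(n,\T^2/n)$ domination fails. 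The conclusion you need (at most $O(n/\T)$ epochs cover time $n$ w.h.p.) is still true, but the correct route under \eqref{nllrst} is the paper's Lemma \ref{lem:nreset}: the inter-reset times $\tau_j$ are i.i.d., bounded by $\T$, and equal to $\T$ with probability at least $0.9$, so $\E[\tau_j]\ge 0.9\T$, and Bernstein applied to $\sum_{j<N}\tau_j$ with $N=\lfloor en/(2c\log n)\rfloor$ gives $\Pr(T_N\le n)\le\exp(-n/(340c\log n))$. (Relatedly, your appeal to girth in (b) to rule out revisits within an epoch is redundant: the reset rule itself guarantees vertex-disjointness inside an epoch, as you correctly note in the same sentence and again in your closing paragraph.)
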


The main idea of the proof is to construct the resets such that probability of intersections of the paths of the walkers is made sufficiently small, so that thereafter, the assignment of the balls is made just as in the regular \textit{power-of-d choices} scheme. Note that having the walkers reset to the stationary distribution of the non-backtracking random walk is critical to latter property. The proof uses the iterative bounding technique outlined in \cite{azar1999balanced}, while the dependent sampling enforced by the random walks forces the use of martingale concentration inequalities to bound the number of bins with high load.

\subsection{Preliminary Results}\label{sec:prelim}
Define $T_j$ to be the time of the $j$-th restart, i.e. the $j$-th smallest $t$ such that $J(t)=1$. Also define $T_0\equiv 0$. $T_j$ is a stopping time with respect to $\mathcal{F}_t$. Define a new filtration 
\begin{equation}
\mathcal{G}_j:=\mathcal{F}_{T_j}:=\{A\in \mathcal{F}_\infty:A\cap \{T_j\leq t \}\in\mathcal{F}_t~\text{for all}~ t=0,1,\cdots \},~~~~~~j=0,1,2,\cdots
\end{equation}

Let $\nu^{(i)}(t)$ denote the number of bins with load at least $i$ at time $t$. Define the height of a ball $j$ to be number of balls in the bin that ball $j$ is inserted into after insertion of ball $j$. Let $\mu^{(i)}(t)$ denote the number of balls within the first $t$ balls with height at least $i$. Clearly, we have $\mu^{(i)}(t)\geq \nu^{(i)}(t)$ for all $i$ and all $t$.

Define the events
\begin{equation}
\mathcal{E}_i:=\{ \nu^{(i)}(n) \leq \beta_i \},~~~~~~i=1,2,\cdots
\end{equation}
where $\beta_i\in \mathbb{R}_+$ will be specified later.

Define
\begin{equation}\label{defI}
I_{j}^{(i)}(s):=\begin{cases}
1&\text{if }Q_{W_l(T_j+s)}(T_j+s)\geq i~~\forall l=1,2 \\
&\text{ and }T_j+s<T_{j+1}\text{ and } \nu^{(i)}(T_j)\leq \beta_i\\
0&\text{otherwise}
\end{cases}
\end{equation}

Also define
\begin{equation}
Z_j^{(i)} = \sum_{s=0}^{\lfloor c\log n\rfloor - 1} I_j^{(i)}(s)
\end{equation}

Clearly, condition on $\mathcal{E}_i$, $\nu^{(i)}(T_j)\leq \beta_i$ is true, and we have $Z_j^{(i)}$ to be equal to the number of occasions where both sampled bins has load at least $i$ between the $j$-th and $(j+1)$-th restart of the random walk.

\begin{lemma}
	For all $s=0,1,\cdots, \lfloor c\log n\rfloor - 1$
	\begin{equation}
	\E[I_j^{(i)}(s)~|~\mathcal{G}_j] \leq \left(\dfrac{\beta_i}{n}\right)^2~~~~~~\text{a.s.}
	\end{equation}
\end{lemma}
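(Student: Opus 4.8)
The plan is to condition on $\mathcal{G}_j=\mathcal{F}_{T_j}$ and argue almost surely. Since $\mathbf{Q}(T_j)$, hence $\nu^{(i)}(T_j)$, is $\mathcal{G}_j$-measurable, on the atom where $\nu^{(i)}(T_j)>\beta_i$ we have $I_j^{(i)}(s)=0$ identically and nothing is to prove; so we may work on $\{\nu^{(i)}(T_j)\le\beta_i\}$ and let $H$ be the ($\mathcal{G}_j$-measurable) set of bins with load at least $i$ at time $T_j$, so $|H|=\nu^{(i)}(T_j)\le\beta_i$. Introduce the reset-free non-backtracking walks restarted at $T_j$: set $\widehat\Psi_l(0):=R_l(T_j)$, $\widehat\Psi_l(r):=T(\widehat\Psi_l(r-1),D_l(T_j+r))$, and $\widehat W_l(r):=\widehat\Psi_{l2}(r)$ for $l=1,2$. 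The target is the pointwise bound
\[
I_j^{(i)}(s)\ \le\ \1\{\nu^{(i)}(T_j)\le\beta_i\}\;\1\{\widehat W_1(s)\in H\}\;\1\{\widehat W_2(s)\in H\}\qquad\text{a.s.,}
\]
after which taking $\E[\,\cdot\mid\mathcal{G}_j]$ and using distributional facts about the $\widehat W_l$ finishes the proof.

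The structural heart is the following. On $\{T_j+s<T_{j+1}\}$ no reset occurs at times $T_j+1,\dots,T_j+s$, so an easy induction on the recursion for $\Psi_l$ gives $\Psi_l(T_j+r)=\widehat\Psi_l(r)$, hence $W_l(T_j+r)=\widehat W_l(r)$, for all $0\le r\le s$ and $l=1,2$. Moreover, because a reset is triggered the instant either walker would step onto a vertex already recorded in $\mathbf{M}$, and because $\mathbf{M}(T_j+s-1)$ records exactly the endpoints $\{W_{l'}(T_j+r):0\le r\le s-1,\ l'=1,2\}$, the vertex $W_l(T_j+s)$ is distinct from every one of those endpoints. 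Since every ball inserted at a time in $\{T_j,\dots,T_j+s-1\}$ is placed at one of those endpoints, the bin $W_l(T_j+s)$ receives no ball during $(T_j,T_j+s]$, so $Q_{W_l(T_j+s)}(T_j+s)=Q_{W_l(T_j+s)}(T_j)$. Therefore, on $\{T_j+s<T_{j+1}\}\cap\{\nu^{(i)}(T_j)\le\beta_i\}$ the event $\{Q_{W_l(T_j+s)}(T_j+s)\ge i\}$ coincides with $\{W_l(T_j+s)\in H\}=\{\widehat W_l(s)\in H\}$; on the complement of $\{T_j+s<T_{j+1}\}$ we have $I_j^{(i)}(s)=0$, giving the displayed inequality. (Note this step uses only the design of $\mathbf{M}$ and the reset rule, not the girth hypothesis.)

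It remains to bound the conditional expectation of the right-hand side. By the strong Markov property at the stopping time $T_j$ — which is a.s.\ finite since a reset occurs at least every $\lfloor c\log n\rfloor$ steps — and since $\mathcal{F}_{T_j}$ involves the $R_l$'s only up to index $T_j-1$, the family $\big(R_l(T_j),D_l(T_j+1),D_l(T_j+2),\dots\big)_{l=1,2}$ is, conditionally on $\mathcal{G}_j$, distributed as fresh mutually independent copies of the original i.i.d.\ sequences and is independent of $\mathcal{G}_j$. Since $R_l(T_j)$ is uniform over directed edges and the uniform law on directed edges is stationary for the non-backtracking walk on the $k$-regular graph $G$, each $\widehat\Psi_l(s)$ is again uniform over directed edges, so $\widehat W_l(s)$ is uniform over $V$, and $\widehat W_1(s),\widehat W_2(s)$ are conditionally independent given $\mathcal{G}_j$. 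As $H$ is $\mathcal{G}_j$-measurable with $|H|\le\beta_i$ on the relevant event,
\[
\E\big[I_j^{(i)}(s)\mid\mathcal{G}_j\big]\ \le\ \1\{\nu^{(i)}(T_j)\le\beta_i\}\,\P\big(\widehat W_1(s)\in H\mid\mathcal{G}_j\big)\,\P\big(\widehat W_2(s)\in H\mid\mathcal{G}_j\big)\ \le\ \Big(\tfrac{|H|}{n}\Big)^{2}\ \le\ \Big(\tfrac{\beta_i}{n}\Big)^{2}.
\]

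The main obstacle is the structural step: rigorously certifying that, conditioned on $\{T_j+s<T_{j+1}\}$, the bin currently occupied by each walker has received no ball since the last reset, which is exactly where the precise bookkeeping of $\mathbf{M}$ and the reset condition must be unwound. A secondary but routine technicality is the strong Markov accounting across $T_j$ — in particular that $R_l(T_j)$ is ``fresh'' given $\mathcal{G}_j$ and that the reset-free walk inherits uniform-on-directed-edges stationarity — which follows once the index conventions in the definitions of $\mathcal{F}_t$ and the $\Psi_l$ recursion are tracked carefully.
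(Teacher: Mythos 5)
Your proposal is correct and follows essentially the same route as the paper: you introduce the reset-free continuation of the walks after $T_j$ (the paper's $\widetilde{\mathbf{W}}$), use the reset/marking mechanism to argue that on $\{T_j+s<T_{j+1}\}$ the currently sampled bins have received no balls since $T_j$ so their loads equal those at $T_j$, and then exploit uniformity on directed edges (stationarity) and independence of the two reset-free walkers to get the $(\beta_i/n)^2$ bound. Your write-up is somewhat more explicit than the paper's (naming the set $H$, stating the pointwise domination, and invoking the strong Markov property at $T_j$), but the mathematical content is identical.
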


\begin{proof}
	At the time of $j$-th restart, $\{\mathbf{Q}(t)\}_{t\geq T_j}, \{\mathbf{W}(t) \}_{t\geq T_j}$ are conditionally independent of $\mathcal{G}_j$ given $\mathbf{Q}(T_j)$. Hence
	\begin{equation}\label{k0}
	\E[I_j^{(i)}(s)~|~\mathcal{G}_j] = \E[I_j^{(i)}(s)~|~\mathbf{Q}(T_j)]
	\end{equation}
	
	Define $\widetilde{\mathbf{W}}(t)$ as follows: $\widetilde{\mathbf{W}}(t) = \mathbf{W}(t)$ for all $t\leq T_j$. After $T_j$, define the evolution of $\widetilde{W}(t)$ as follows: Let $\widetilde{\Psi}(T_j) = \Psi(T_j)$. Then
	\begin{align}
	\widetilde{\Psi}_j(t) &:= T(\widetilde{\Psi}_j(t-1), D_j(t))~~~~~~\forall t>T_j
	\end{align}
	then set $\widetilde{W}_j(t) = \Psi_{j2}(t)$. In other words, for $\widetilde{\mathbf{W}}(t)$, the restart mechanism is canceled after $T_j$. It is clear that $\widetilde{\mathbf{W}}(t) = \mathbf{W}(t)$ is also true for $T_j\leq t < T_{j+1}$.
	
	If $\mathbf{Q}(T_j)=\mathbf{q}$ is such that $\nu^{(i)}(T_j)>\beta_i$, then
	\begin{equation}\label{k1}
	\E[I_j^{(i)}(s)~|~\mathbf{Q}(T_j) = \mathbf{q}] = 0
	\end{equation}
	by definition in \eqref{defI}. 
	
	If $\mathbf{Q}(T_j)=\mathbf{q}$ is such that $\nu^{(i)}(T_j)\leq \beta_i$, then
	\begin{equation}
	\begin{split}
	\E[I_j^{(i)}(s)~|~\mathbf{Q}(T_j) = \mathbf{q}] &= \Pr(Q_{W_l(T_j+s)}(T_j+s)\geq i~\forall l=1,2,~T_j+s<T_{j+1}~|~\mathbf{Q}(T_j) = \mathbf{q})\\
	&\stackrel{(*)}{=} \Pr(Q_{W_l(T_j+s)}(T_j)\geq i~\forall l=1,2,~T_j+s<T_{j+1}~|~\mathbf{Q}(T_j) = \mathbf{q})\\
	&= \Pr(Q_{\widetilde{W}_l(T_j+s)}(T_j)\geq i~\forall l=1,2,~T_j+s<T_{j+1}~|~\mathbf{Q}(T_j) = \mathbf{q})\\
	&\leq\Pr(Q_{\widetilde{W}_l(T_j+s)}(T_j)\geq i~\forall l=1,2~|~\mathbf{Q}(T_j) = \mathbf{q})\\
	&=\Pr(q_{\widetilde{W}_l(T_j+s)}\geq i~\forall l=1,2)
	\end{split}
	\end{equation}
	where (*) is true since by construction, $W_1(T_j+s)$ and $W_2(T_j+s)$ are vertices that have never been visited in period $[T_j, T_j+s)$, given that $T_j+s<T_{j+1}$. Hence the load of these bins at time $T_j+s$ are the same as at time $T_j$.
	
	For any $s\geq 0$, $\widetilde{W}_1(T_j+s), \widetilde{W}_2(T_j+s)$ are uniformly and independently distributed on the set of vertices, hence
	\begin{equation}\label{k2}
	\begin{split}
	\E[I_j^{(i)}(s)~|~\mathbf{Q}(T_j) = \mathbf{q}]&\leq\Pr(q_{\widetilde{W}_l(T_j+s)}\geq i~\forall l=1,2)\\&=\Pr(q_{\widetilde{W}_1(T_j+s)}\geq i) \Pr(q_{\widetilde{W}_2(T_j+s)}\geq i)\\
	&\leq\left( \dfrac{\beta_i}{n}\right)^2 
	\end{split}
	\end{equation}
	
	Combining \eqref{k0}, \eqref{k1} and \eqref{k2}, we conclude that
	\begin{equation}
	\E[I_j^{(i)}(s)~|~\mathcal{G}_j] \leq \left( \dfrac{\beta_i}{n}\right)^2
	\end{equation}
\end{proof}

Denote $N = \lfloor \frac{en}{2c\log n}\rfloor$, then define the event
\begin{equation}
A:=\{ T_{N} > n \}
\end{equation}
i.e. the event that at most $N$ restarts (including the initial start) has taken place within time $n$.

Condition on $\mathcal{E}_i\cap A$, we have $\sum_{l=0}^{N-1} Z_l^{(i)}$ to be the number of occasions where both sampled bins has load at least $i$ before time $T_{N}$, which is larger than $n$. Thus $\sum_{l=0}^{N-1} Z_l^{(i)} \geq \mu^{(i+1)}(n)$ on $\mathcal{E}_i\cap A$.

To bound the probability of $A^c$, we need to impose some condition on the graph $G$ (more specifically, a sequence of graphs $G_n$), 

\begin{assump}\label{assump: 1}
	\begin{align}\label{nllrst}
	\Pr(T_1 < \lfloor c\log n\rfloor ) \leq 0.1 \tag{N}
	\end{align}
	i.e. intersection before time $\lfloor c\log n\rfloor$ of randomly initialized non-backtracking random walks on the graph is not of high probability.
\end{assump}

\begin{lemma}\label{lem:nreset}
	Under Condition \eqref{nllrst} specified in Assumption \ref{assump: 1}, we have $\Pr(A^c) \leq \exp\left(\frac{n}{340c\log n}\right)$ for large $n$.
\end{lemma}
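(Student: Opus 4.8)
The plan is to exploit the renewal structure created by the resets. The first step is to show that the inter-reset gaps $G_j := T_{j+1}-T_j$ (with $T_0=0$) are i.i.d.\ copies of $T_1$, each bounded above by $\lfloor c\log n\rfloor$. This is a strong Markov argument: at time $T_j$ the driving state is $(\Psi_1(T_j),\Psi_2(T_j),\mathbf{M}(T_j),C(T_j)) = (R_1(T_j),R_2(T_j), e_{R_{12}(T_j)}\vee e_{R_{22}(T_j)}, 0)$, which is a fixed deterministic function of the reset edges $R_1(T_j),R_2(T_j)$; these edges, together with the innovations $\{D_1(s),D_2(s)\}_{s>T_j}$ and $\{R_1(s),R_2(s)\}_{s>T_j}$, are independent of $\mathcal{G}_j=\mathcal{F}_{T_j}$ (note $\mathcal{F}_{T_j}$ contains the $R$-variables only up to index $T_j-1$) and have a joint law not depending on the value of $T_j$. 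Hence the post-$T_j$ evolution of $(\Psi,\mathbf{M},C)$, and in particular $G_j$, has conditionally on $\mathcal{G}_j$ exactly the law that $T_1$ has unconditionally (the state at time $0$ being of the same form). In particular every $T_j$ is finite, since the recursions define the process for all $t$ and force a reset at least every $\lfloor c\log n\rfloor$ steps.

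Next, by Condition \eqref{nllrst}, $\E[\1\{G_j<\lfloor c\log n\rfloor\}\mid\mathcal{G}_j] = \Pr(T_1<\lfloor c\log n\rfloor)\le 0.1$ a.s. Setting $Z_j:=\1\{G_{j-1}<\lfloor c\log n\rfloor\}$ for $j=1,\dots,N$, adapted to $\mathcal{H}_j:=\mathcal{G}_j$, I would apply Corollary \ref{berncor} with $B=1$, $m=0.1$ and $\lambda = 0.2N \ge 2Nm$ to get $\Pr\!\big(\sum_{j=1}^{N}Z_j \ge 0.2N\big) \le \exp(-3N/80)$.

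The loop is then closed by a deterministic counting step. On the complement of the above event, fewer than $0.2N$ of $G_0,\dots,G_{N-1}$ are strictly below $\lfloor c\log n\rfloor$, so more than $0.8N$ of them equal $\lfloor c\log n\rfloor$ exactly, giving $T_N=\sum_{j=0}^{N-1}G_j \ge 0.8N\lfloor c\log n\rfloor$. Using $N=\lfloor \tfrac{en}{2c\log n}\rfloor \ge \tfrac{en}{2c\log n}-1$ and $\lfloor c\log n\rfloor \ge c\log n-1$, this is $\ge 0.4en - O(n/\log n) - O(\log n)$, which exceeds $n$ for all large $n$ since $0.4e>1$; hence this event is contained in $A$. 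Therefore $\Pr(A^c)\le \exp(-3N/80)$, and since $\tfrac{3N}{80}\ge \tfrac{3e}{160}\cdot\tfrac{n}{c\log n}-\tfrac{3}{80}$ with $\tfrac{3e}{160}>\tfrac{1}{340}$, this is at most $\exp\!\big(-\tfrac{n}{340c\log n}\big)$ for large $n$ (the stated bound, which should carry a minus sign since $A^c$ is the rare event).

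The main obstacle is the first paragraph: making the renewal/forgetting claim fully rigorous given the somewhat intricate joint construction of $(\Psi,\mathbf{M},C,J)$ — one has to verify that $\mathcal{G}_j$ genuinely excludes the reset edges $R_1(T_j),R_2(T_j)$ and that the post-$T_j$ driving randomness is independent of $\mathcal{G}_j$ with a $T_j$-free law (the standard strong Markov fact for i.i.d.\ sequences sampled at stopping times), so that the $G_j$ are truly i.i.d. Once that is in place, the remaining steps are a direct invocation of Corollary \ref{berncor} and elementary floor estimates.
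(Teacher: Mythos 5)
Your argument is correct and follows essentially the same route as the paper: both reduce to the fact that the inter-reset gaps $\tau_j = T_{j+1}-T_j$ are i.i.d.\ copies of $T_1$ bounded by $\lfloor c\log n\rfloor$ (a fact the paper asserts without the strong-Markov justification you sketch) and then apply a Bernstein-type concentration bound to conclude $T_N>n$ with overwhelming probability. The only difference is in the concentration step: the paper centers the gap lengths themselves and applies Bernstein directly with the variance bound $(1-p^2)(\lfloor c\log n\rfloor)^2$, whereas you count the short gaps as Bernoulli indicators and invoke Corollary~\ref{berncor}; your version yields a slightly stronger exponent than $n/(340c\log n)$ and correctly notes that the bound in the lemma statement is missing its minus sign.
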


\begin{remark}
	The choice of 0.1 in \eqref{nllrst} is purely arbitrary. With more careful analysis one can allow the bound to be any number strictly smaller than 1. However, for most interesting cases, the probability $\Pr(T_1<\lfloor c\log n\rfloor)$ goes to 0 as $n$ goes to infinity.
\end{remark}

\begin{remark}
	Assume that $c$ is a constant. If the graph $G$ has girth $g > 6 \log_{k-1} \log n$, then (\ref{nllrst}) is true for sufficiently large $n$. If $G$ is a $k$-regular random graph, then (\ref{nllrst}) is true with high probability.
\end{remark}

\subsection{Proof of Main Theorem}

Set $\beta_6 = \dfrac{n}{2e}$. Then $\nu^{(6)}(n)\leq \beta_i$ is always true. Hence $\Pr(\mathcal{E}_6) = 1$.

First for all $i$, we have
\begin{align*}
\Pr(\mathcal{E}_{i+1}^c \cap \mathcal{E}_i \cap A) &= \Pr(\nu^{(i+1)}(n) > \beta_{i+1}, \mathcal{E}_i\cap A )\\
&\leq \Pr(\mu^{(i+1)}(n) > \beta_{i+1}, \mathcal{E}_i\cap A )\\
&\leq \Pr\left(\sum_{j=0}^{N-1} Z_j^{(i)} > \beta_{i+1}, \mathcal{E}_i\cap A\right)\\
&\leq \Pr\left(\sum_{j=0}^{N-1} Z_j^{(i)} \geq\beta_{i+1}\right)
\end{align*}

We know that
\begin{equation}
\E[Z_j^{(i)}~|~\mathcal{G}_j]\leq c\log n\left(\dfrac{\beta_i}{n}\right)^2=:m_i
\end{equation}

and
\begin{equation}
\dfrac{en}{2} \geq Nc\log n
\end{equation}

For $6\leq i< i^*$, set $\beta_{i+1} = en\left(\frac{\beta_i}{n}\right)^2$, where $i^*$ is the first $i$ such that $en\left(\frac{\beta_i}{n}\right)^2< 8c(\log n)^2$. It can be shown that $i^* \leq \log_2\log n + \Theta(1)$

We have $\beta_{i+1}\geq 2Nm_i$, hence we can apply Corollary \ref{berncor} and obtain
\begin{equation}\label{i1}
\begin{split}
\Pr(\mathcal{E}_{i+1}^c \cap \mathcal{E}_i \cap A) 
&\leq \Pr\left(\sum_{j=0}^{N-1} Z_j^{(i)} \geq \beta_{i+1}\right)\leq \exp\left( -\dfrac{3\beta_{i+1}}{16c\log n}\right)\\
&\leq \exp\left( -\dfrac{3\cdot 8c(\log n)^2}{16c\log n}\right) = n^{-1.5}
\end{split}
\end{equation}
for $6\leq i<i^*$.

Set $\beta_{i^*+1} = 8c(\log n)^2$. We still have $\beta_{i^*+1}\geq 2Nm_{i^*}$, applying Corollary \ref{berncor} again we have
\begin{equation}\label{istar}
\begin{split}
\Pr(\mathcal{E}_{i^*+1}^c \cap \mathcal{E}_{i^*} \cap A) 
&\leq \Pr\left(\sum_{j=0}^{N-1} Z_j^{(i^*)} \geq \beta_{i^*+1}\right)\leq \exp\left( -\dfrac{3\beta_{i^*+1}}{16c\log n}\right)\\
&= \exp\left( -\dfrac{3\cdot 8c(\log n)^2}{16c\log n}\right) = n^{-1.5}
\end{split}
\end{equation}

Set $\beta_{i^*+2} = 0.8$, by Markov Inequality we have
\begin{equation}\label{istarp1}
\begin{split}
\Pr(\mathcal{E}_{i^*+2}^c \cap \mathcal{E}_{i^*+1} \cap A) 
&\leq \Pr\left(\sum_{j=0}^{N-1} Z_j^{(i^*+1)} \geq \beta_{i^*+2}\right)\leq \dfrac{\E\left[\sum_{j=0}^{N-1} Z_j^{(i^*+1)} \right] }{\beta_{i^*+2}}\\&\leq \dfrac{Nc\log n \left(\frac{\beta_{i^*+1}}{n}\right)^2 }{\beta_{i^*+2}}\leq\dfrac{\frac{en}{2} \left(\frac{\beta_{i^*+1}}{n}\right)^2 }{\beta_{i^*+2}}=40e\cdot\dfrac{c^2(\log n)^2}{n}
\end{split}
\end{equation}

Hence, combining \eqref{i1}\eqref{istar}\eqref{istarp1} we obtain
\begin{equation*}
\begin{split}
\Pr(\mathcal{E}_{i^*+2}^c ) &= \Pr(\mathcal{E}_{i^*+2}^c\cap \mathcal{E}_6)\\
&\leq \Pr(\mathcal{E}_{i^*+2}^c\cap \mathcal{E}_6\cap A ) + \Pr(A^c) \\
&\leq \Pr\left( \bigcup_{i=6}^{i^*+1} (\mathcal{E}_{i+1}^c \cap \mathcal{E}_i\cap A)  \right)+ \Pr(A^c)\\
&\leq \sum_{i=6}^{i^*+1}\Pr(\mathcal{E}_{i+1}^c \cap \mathcal{E}_i\cap A) + \Pr(A^c)\\
&\leq \dfrac{i^*-4}{n^{1.5}} + 40e\cdot \dfrac{c^2(\log n)^4}{n} + \exp\left(-\dfrac{n}{340c\log n}\right)\\
&=o(1)
\end{split}
\end{equation*}
for $n\geq \max\{\exp(\frac{10}{c}), \left(\frac{40c}{\alpha e}\right)^{1/(1-\alpha)},3\}$ for any $0<\alpha<1$.

Therefore, with high probability, $\mathcal{E}_{i^*+2}$ is true, that is, no bin has load exceeding $i^*+2\leq \log_2\log n + \Theta(1)$. Proving the result.

\begin{remark}
	All the above results are non-asymptotic. One can consider some sequence $c_n$ that grows or decays as $n\rightarrow\infty$, as long as $n\geq \max\{\exp(\frac{10}{c}), \left(\frac{40c}{\alpha e}\right)^{1/(1-\alpha)}\}$ is asymptotically true for some $\alpha\in (0, 1)$. In fact, if $c_n$ is such that $c_n \log n$ are integer valued, then one only needs to ensure that $n\geq  \max\{ \exp(\frac{1}{c}),\left(\frac{40c}{\alpha e}\right)^{1/(1-\alpha)} \}$ for some $\alpha\in (0, 1)$
\end{remark}

\begin{remark}
	By setting $c=\dfrac{1}{\log n}$, The scheme becomes the same as the classical power of 2 scheme. We hence obtain the classical result that the maximum load under power of 2 is $\log_2\log n + \Theta(1)$ with high probability. 
\end{remark}

\begin{cor}[Round-Robin with Restart]
	Let $G$ be a cycle graph and $c\log n = \lfloor n^{0.4}\rfloor$. Then the maximum load under the proposed scheme is less than $\log_2\log n + \Theta(1)$ with high probability.
\end{cor}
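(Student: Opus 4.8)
The plan is to show that the cycle $C_n$ together with the choice $c\log n=\lfloor n^{0.4}\rfloor$ satisfies Condition~\eqref{nllrst} of Assumption~\ref{assump: 1}, as well as the non-asymptotic side conditions on $n$ that appear in the proof of the Main Theorem of Section~\ref{sec:main-results}, and then to invoke that theorem verbatim. Thus the entire content of the corollary is the verification of these hypotheses for this particular graph sequence and this particular (non-constant) $c$; no new analysis of the allocation itself is needed, since all of Section~\ref{sec:main-results} treats the within-epoch dynamics as a black box once the resets are controlled.

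First I would make the walker dynamics explicit. On $C_n$ every vertex has degree $k=2$, so $k-1=1$ and each $D_j(t)$ is deterministic; a non-backtracking walk on a cycle is forced to keep moving in the direction fixed by its initial edge. Identifying $V$ with $\mathbb{Z}_n$, walker $j$ has position $p_j(t)=W_j(0)+v_j t \pmod n$ for a velocity $v_j\in\{+1,-1\}$, where $W_j(0)$ (the head of the uniform random directed edge $R_j(0)$) is uniform on $\mathbb{Z}_n$ and $v_j$ is uniform on $\{\pm1\}$, with $W_1(0),W_2(0),v_1,v_2$ mutually independent. Since a single walker revisits no vertex before time $n$ and $\lfloor c\log n\rfloor=\lfloor n^{0.4}\rfloor<n$, before time $n$ the flag $J(t)=1$ can only be triggered by a forced reset or by an inter-walker intersection; in particular, before the forced-reset cap, $T_1$ is the first time the two traced arcs overlap.

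Next I would compute the law of $T_1$. Let $\delta$ be the clockwise distance from $p_1(0)$ to $p_2(0)$; conditioned on $(v_1,v_2)$, $\delta$ is uniform on $\{0,1,\dots,n-1\}$. If $v_1=v_2$ the two arcs keep a constant offset and first overlap at time $\min(\delta,n-\delta)$ (and never before the forced reset when $\delta=0$); if $v_1=-v_2$ the two arcs grow toward each other inside the length-$\delta$ arc cut off by the starting points, so the first overlap occurs at a time of order $\delta/2$, the complementary arc never being eaten into before the walkers wrap around. In either case $\Pr(T_1<L\mid v_1,v_2)\le 2L/n$, hence $\Pr(T_1<\lfloor c\log n\rfloor)\le 2\lfloor n^{0.4}\rfloor/n\le 2n^{-0.6}$, which is at most $0.1$ for all large $n$. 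This establishes Condition~\eqref{nllrst}, so Lemma~\ref{lem:nreset} and the whole apparatus of Section~\ref{sec:main-results} apply.

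It then remains to check the numeric side conditions. Here $c=\lfloor n^{0.4}\rfloor/\log n\to\infty$ with $c\log n$ integer-valued, so by the corresponding remark it suffices that $n\ge\max\{\exp(1/c),(40c/(\alpha e))^{1/(1-\alpha)}\}$ for some $\alpha\in(0,1)$; taking $\alpha=1/2$, the first bound tends to $1$ and the second is $O(n^{0.8}/(\log n)^2)=o(n)$, so both hold eventually. Finally, the recursion $\beta_{i+1}=en(\beta_i/n)^2$ from $\beta_6=n/(2e)$ gives $\beta_i=(n/e)\,2^{-2^{i-6}}$, and the threshold $en(\beta_i/n)^2<8c(\log n)^2=8\lfloor n^{0.4}\rfloor\log n$ is first met at $i^*=\log_2\log n+\Theta(1)$, so the Main Theorem's conclusion is precisely ``maximum load $<\log_2\log n+\Theta(1)$ with high probability''. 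The only mildly delicate step is the exact first-intersection analysis on the cycle, in particular the opposite-direction case and the $O(1/n)$-probability corner cases where the two walkers coincide or are adjacent at time $0$; these affect only lower-order terms and leave the $O(n^{-0.6})$ estimate intact.
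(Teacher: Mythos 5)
Your proposal is correct and follows exactly the route the paper intends: the corollary is stated right after the remarks allowing non-constant $c$, and its entire content is the verification of Condition~\eqref{nllrst} for the cycle (where the non-backtracking walks degenerate to two constant-velocity rotations, giving $\Pr(T_1<\lfloor n^{0.4}\rfloor)=O(n^{-0.6})$) together with the numeric side conditions on $n$, both of which you check correctly. The minor constant in your arc-overlap bound is immaterial since the target threshold is $0.1$.
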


\section{The Second Scheme: Model}\label{sec:simp}
In this section we introduce a simpler scheme: In this scheme we still have a underlying $k$-regular graph $G$ and a process $(W_1(t), W_2(t))$ defined on the graph. At each time, a ball is assigned to the least loaded bin of $W_1(t)$ and $W_2(t)$. 

$W_1(t), W_2(t)$ are now defined as follows: Starting from independent uniform positions, two random walkers perform independent non-backtracking simple random walks on graph $G$. In contrast to the first scheme, we allow intersections: A restart only happens every $\lfloor c\log n\rfloor$ periods, i.e. every $\lfloor c\log n\rfloor$ timestamps, the two random walkers are reset to independent uniform vertices on the graph. We will assume that $c$ is a positive constant for this scheme.

For this scheme, we need a stronger assumption on the graph $G$: 

\begin{assump}\label{assump: 2}
	The girth of the graph is greater than or equal to $2\lceil \alpha\log_{k-1} n \rceil + 1$, where $\alpha$ is a strictly positive constant.
\end{assump}

\section{The Second Scheme: Main Results}
\begin{thm}\label{thm:2works}
	Under Assumption \ref{assump: 2}, the maximum load achieved by the scheme proposed in Section \ref{sec:simp} is less than $\log_2\log n + \Theta(1)$ with high probability.
\end{thm}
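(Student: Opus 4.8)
The plan is to rerun the argument of Section~\ref{sec:main-results} almost verbatim, with two modifications. First, since restarts now occur at the deterministic times $T_j=j\lfloor c\log n\rfloor$, the event $A=\{T_N>n\}$ with $N:=\lfloor en/(2c\log n)\rfloor$ holds with probability one (as $N\lfloor c\log n\rfloor\ge n$ for large $n$), so it disappears from every bound. Second, and this is the crux, the first scheme's key lemma $\E[I_j^{(i)}(s)\mid\mathcal{G}_j]\le(\beta_i/n)^2$ used the reset-on-intersection rule to force the two walkers, on $\{T_j+s<T_{j+1}\}$, onto vertices never visited earlier in the epoch. Under Assumption~\ref{assump: 2} this disjointness is no longer automatic: it must be recovered from the girth, and the residual correlation between the walkers must be estimated.

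So the first step is to redefine $I_j^{(i)}(s)$ as $\tilde I_j^{(i)}(s)$, which in addition to the conditions in \eqref{defI} also requires the ($\mathcal{G}_j$-measurable) event $\{\nu^{(i-1)}(T_j)\le\beta_{i-1}\}$, and to prove the replacement estimate
\[
\E[\tilde I_j^{(i)}(s)\mid\mathcal{G}_j]\;\le\;\Big(\tfrac{\beta_i}{n}\Big)^2+\tfrac{3(s+1)\beta_{i-1}}{n^2}\qquad\text{a.s.}
\]
For this, note that Assumption~\ref{assump: 2} (in the regime where the girth exceeds $\lfloor c\log n\rfloor$, i.e. $\alpha\gtrsim c\log(k-1)$) forces each walker's path over one epoch to be self-avoiding, since a repeated vertex would close a cycle of length at most $\lfloor c\log n\rfloor$. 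Hence a vertex equal to $W_\ell(T_j+s)$ can have been visited at most once during $[T_j,T_j+s)$ — necessarily by the \emph{other} walker — so its load at time $T_j+s$ exceeds its load at time $T_j$ by at most one. Writing $P_\ell$ for the set of at most $s+1$ vertices on walker $\ell$'s path and $B_0^{(r)}=\{v:Q_v(T_j)\ge r\}$, the event $\{\tilde I_j^{(i)}(s)=1\}$ forces, for $\ell=1,2$, that $W_\ell(T_j+s)\in B_0^{(i)}$, or else $W_\ell(T_j+s)\in P_{3-\ell}\cap B_0^{(i-1)}$. Expanding the four resulting cases and using that, after the restart at $T_j$, $W_1(T_j+s)$ and $W_2(T_j+s)$ are independent, each uniform on $V$ (stationarity of the uniform distribution on directed edges under the non-backtracking step), and independent of $\mathcal{G}_j$: the ``both endpoints heavy'' case is bounded by $(\beta_i/n)^2$, and each of the other three is bounded by conditioning on one walker's path and taking a union bound over the other's uniform endpoint, giving at most $(s+1)\beta_{i-1}/n^2$. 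Adding up yields the displayed estimate.

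Given this, set $\tilde Z_j^{(i)}=\sum_{s=0}^{\lfloor c\log n\rfloor-1}\tilde I_j^{(i)}(s)$, so that $0\le\tilde Z_j^{(i)}\le c\log n$, $\E[\tilde Z_j^{(i)}\mid\mathcal{G}_j]\le c\log n\,(\beta_i/n)^2+3(c\log n)^2\beta_{i-1}/n^2=:m_i$, and on $\mathcal{E}_i\cap\mathcal{E}_{i-1}$ one has $\sum_{j=0}^{N-1}\tilde Z_j^{(i)}\ge\mu^{(i+1)}(n)\ge\nu^{(i+1)}(n)$; hence $\Pr(\mathcal{E}_{i+1}^c\cap\mathcal{E}_i\cap\mathcal{E}_{i-1})\le\Pr(\sum_j\tilde Z_j^{(i)}\ge\beta_{i+1})$. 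Taking $\beta_5=\beta_6=\Theta(n)$ and $\beta_{i+1}=en(\beta_i/n)^2+3ec(\log n)\beta_{i-1}/n$ one checks $2Nm_i\le\beta_{i+1}$, so Corollary~\ref{berncor} applies while $\beta_{i+1}\gtrsim(\log n)^2$ and Markov's inequality for the last few levels. The extra summand $3ec(\log n)\beta_{i-1}/n$ is $o(\beta_{i+1})$ as long as $\beta_i\gtrsim n^{2/3}$, so the recursion behaves like the first scheme's ($\beta_{i+1}\approx e\beta_i^2/n$) until $\beta_i$ has dropped to $O(n^{2/3})$, which takes $\log_2\log n+\Theta(1)$ steps; thereafter $O(1)$ further levels (now driven by the correction term, which contracts $\beta$ by a factor $\Theta(\log n/n)$ every two levels, and in fact once $Nm_i=o(1)$ one concludes $\mu^{(i+1)}(n)=0$ whp directly) bring $\beta$ below $1$. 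A union bound over the $O(\log\log n)$ levels, exactly as in Section~\ref{sec:main-results}, gives $\Pr(\mathcal{E}_{i^*+\Theta(1)}^c)=o(1)$, i.e. with high probability no bin has load exceeding $\log_2\log n+\Theta(1)$.

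The main obstacle is the new key estimate: disjointness of the two walkers within an epoch, which the first scheme bought cheaply through its reset-on-intersection rule, must now be extracted from the girth, together with the observation that a current walker position's load has moved by at most one since the epoch's start. Crucially the correction term must be of order $(s+1)\beta_{i-1}/n^2$ rather than the naive $(s+1)/n$ — the latter would leave a $\Theta(\log n)$ floor in $\E[\sum_j\tilde Z_j^{(i)}]$ and block the recursion from reaching a constant — which forces one to retain, in each of the ``collision'' cases, the requirement that the non-colliding endpoint still be (nearly) heavy. Everything downstream is a rerun of the first scheme's computation carrying one extra, lower-order term.
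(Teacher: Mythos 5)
There is a genuine gap, and it is exactly where you flagged it yourself: your key estimate is proved only ``in the regime where the girth exceeds $\lfloor c\log n\rfloor$,'' but Assumption \ref{assump: 2} does not put you in that regime. It only guarantees girth at least $2\lceil \alpha\log_{k-1}n\rceil+1$ for \emph{some} positive constant $\alpha$, and the scheme's epoch length $\lfloor c\log n\rfloor$ is a separate positive constant times $\log n$; nothing forces $c\lesssim \alpha/\log(k-1)$. When the girth is smaller than the epoch length, a walker's path within one epoch is \emph{not} self-avoiding: a single walker can revisit the same vertex up to roughly $c\log(k-1)/(2\alpha)$ times per epoch, so (i) your claim that $W_\ell(T_j+s)$ ``can have been visited at most once during $[T_j,T_j+s)$ --- necessarily by the other walker'' fails, and with it the reduction to the threshold $i-1$ (the load could have risen by $L=\Theta(c/\alpha)$, not by $1$); and (ii) the probability that a walker returns to one of its own earlier positions is not zero but only $\leq n^{-\alpha}$ per pair of times (this is Lemma \ref{lem:alon}), a term your decomposition never produces. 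The paper's proof is built around precisely these two facts: the key lemma carries an additive error $2s/n^{\alpha}+2s/n$ for the first $i^*$ levels, and then switches at $i^*$ to load thresholds $r_i$ spaced by $L+1$, so that the deterministic bound ``a sampled bin gains at most $L$ balls within an epoch'' replaces the collision estimate once $\beta_i$ is too small for $n^{-\alpha}$ to be absorbed. Both devices are needed to get the additive $\Theta(1)$ (which now hides the constant $L$) in the final bound; your argument as written proves the theorem only under the strictly stronger hypothesis $\mathrm{girth}>\lfloor c\log n\rfloor$.

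That said, within that restricted regime your route is genuinely different and has a nice feature the paper's does not: by retaining the requirement that the colliding endpoint lie in $B_0^{(i-1)}$, your correction term $3(s+1)\beta_{i-1}/n^2$ shrinks along the recursion, so no regime switch or threshold re-spacing is needed and the recursion runs on consecutive integer load levels all the way down. The paper instead pays a collision cost $2s/n^{\alpha}$ that does \emph{not} shrink with $i$, which is exactly why it must stop that bound at $i^*$ (defined by $(\beta_{i-1}/n)^2<2c\log n/n^{\gamma}$) and finish with the deterministic $L$-argument. If you want to salvage your proof under the full Assumption \ref{assump: 2}, you must add the $n^{-\alpha}$ self-return term to your collision event and replace ``visited at most once'' by ``visited at most $L$ times,'' which effectively forces you back to the paper's two-regime structure.
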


Similar to the previous scheme, the proof of Theorem \ref{thm:2works} is based on a key lemma which estimates the probability of sampling high load bins. The rest of the proof is based on martingale concentration inequalities and Azar et al.'s iterative bounding technique \cite{azar1999balanced}.

\subsection{Preliminary Results}
Unless specified, we will use the same notations as for the proof of the first scheme.

Define $T_j$ to be the time of the $j$-th restart. Now, we have $T_j=j\cdot \lfloor c\log n\rfloor$ deterministically. Also define $T_0\equiv 0$. Define a new filtration 
\begin{equation}
\mathcal{G}_j:=\mathcal{F}_{T_j},~~~~~~j=0,1,2,\cdots
\end{equation}

Let $\nu^{(i)}(t)$ denote the number of bins with load at least $i$ at time $t$. Define the height of a ball $j$ to be number of balls in the bin that ball $j$ is inserted into after insertion of ball $j$. Let $\mu^{(i)}(t)$ denote the number of balls with height at least $i$ at time $t$. Clearly, we have $\mu^{(i)}(t)\geq \nu^{(i)}(t)$ for all $i$ and all $t$.

Define the events
\begin{equation}
\mathcal{E}_i:=\{ \nu^{(r_i)}(n) \leq \beta_i \},~~~~~~i=1,2,\cdots
\end{equation}
where 
\begin{equation}
r_i=\begin{cases}
i&i\leq i^*\\
i^*+(i-i^*)(L+1)&i>i^*
\end{cases}
\end{equation}
where $i^*\in\mathbb{N}, L\in\mathbb{Z}_+$ and $\beta_i\in \mathbb{R}_+$ will be specified later.

Define
\begin{equation}\label{defI2}
I_{j}^{(i)}(s):=\begin{cases}
1&\text{if }Q_{W_l(T_j+s)}(T_j+s)\geq r_{i+1}-1~~\forall l=1,2 \\
&\text{ and } \nu^{(r_{i})}(T_j) \leq \beta_{i}\\
0&\text{otherwise}
\end{cases}~~~~~~0\leq s <\lfloor c\log n\rfloor
\end{equation}
for $i\geq 2$.

Also define
\begin{equation}
Z_j^{(i)} = \sum_{s=0}^{\lfloor c\log n\rfloor - 1} I_j^{(i)}(s)
\end{equation}

Clearly, condition on $\mathcal{E}_i$, $\nu^{(i)}(T_j)\leq \beta_i$ is true, and we have $Z_j^{(i)}$ to be equal to the number of occasions where both sampled bins has load at least $r_{i+1}-1$ between the $j$-th and $(j+1)$-th restart of the random walk, which equals the increment of number of balls with height at least $r_{i+1}$ in this period.

\begin{lemma}\label{lem:alon}
	Let $V(t)$ be a non-backtracking random walk on $G$, starting from a uniform random vertex. Then for all $s>t$
	\begin{equation}
	\Pr(V(s) = V(t)~|~V(t), V(t-1)) \leq \dfrac{1}{n^{\alpha}}\qquad a.s.
	\end{equation}
\end{lemma}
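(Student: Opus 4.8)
The plan is to prove Lemma~\ref{lem:alon} by a direct counting argument that exploits the large-girth Assumption~\ref{assump: 2}. The key observation is that on a graph of girth at least $2\lceil \alpha\log_{k-1}n\rceil+1$, a non-backtracking walk cannot return to its starting vertex in fewer than $2\lceil \alpha\log_{k-1}n\rceil+1$ steps: any closed non-backtracking walk of length $\ell$ contains a cycle, hence has length at least the girth. So if $s-t \le 2\lceil\alpha\log_{k-1}n\rceil$ (equivalently, if the elapsed time is below the girth), the probability in question is simply $0$, which is trivially $\le n^{-\alpha}$. The entire content of the lemma is therefore the regime $s-t$ large, where we must show the walk ``forgets'' its start sufficiently.

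For the nontrivial regime, I would condition on $(V(t),V(t-1))$, i.e. on the current directed edge $e_t = (V(t-1),V(t))$, and track the distribution of the directed edge $e_r$ traversed at each later time $r$. A non-backtracking walk on a $k$-regular graph, viewed on directed edges, is a Markov chain on the $nk$ directed edges where from $(u,v)$ one moves to $(v,w)$ for any of the $k-1$ choices $w\neq u$, each with probability $1/(k-1)$. The number of non-backtracking walks of a given length $m = s-t$ emanating from a fixed directed edge is exactly $(k-1)^{m}$, each occurring with probability $(k-1)^{-m}$. To bound $\Pr(V(s)=V(t)\mid V(t),V(t-1))$ I would count how many of these $(k-1)^m$ length-$m$ non-backtracking walks end at the specific vertex $V(t)$. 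The girth hypothesis limits this count: essentially, the walk must first travel out and then come back, and the high-girth structure (the neighborhood of $V(t)$ up to radius $\lceil\alpha\log_{k-1}n\rceil$ being a tree) forces any such returning walk to retrace a long path, which costs a factor of roughly $(k-1)^{\lceil\alpha\log_{k-1}n\rceil}\ge n^{\alpha}$ in the denominator. Making this precise, one gets that the number of length-$m$ non-backtracking closed-ish walks returning to $V(t)$ is at most $(k-1)^{m}/(k-1)^{\lceil\alpha\log_{k-1}n\rceil} \le (k-1)^m/n^{\alpha}$, hence the conditional probability is at most $n^{-\alpha}$.

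The main obstacle I anticipate is making the combinatorial counting of returning non-backtracking walks airtight for \emph{all} lengths $m>0$ simultaneously, not just $m$ comparable to the girth. For $m$ much larger than $\log n$ the tree-neighborhood argument around $V(t)$ still applies to the last $\lceil\alpha\log_{k-1}n\rceil$ steps of the walk: whatever the walk does, in order to be at $V(t)$ at time $s$ it must, over its final $\lceil\alpha\log_{k-1}n\rceil$ steps, traverse a non-backtracking path inside the tree-like ball of that radius centered at $V(t)$ and arrive at the center, and within a tree there is at most one non-backtracking path of given length between two endpoints — so the ``last stretch'' is determined by the position $\lceil\alpha\log_{k-1}n\rceil$ steps before $s$, pinning down a $(k-1)^{-\lceil\alpha\log_{k-1}n\rceil}$ factor uniformly in $m$. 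I would phrase this as: decompose the walk at time $s - \lceil\alpha\log_{k-1}n\rceil$, sum over the (at most $(k-1)^{\,\lceil\alpha\log_{k-1}n\rceil}$ relevant, in fact fewer) states at that time from which $V(t)$ is reachable in exactly $\lceil\alpha\log_{k-1}n\rceil$ non-backtracking steps, and note each contributes at most its own marginal probability times $(k-1)^{-\lceil\alpha\log_{k-1}n\rceil}$; summing gives $\le (k-1)^{-\lceil\alpha\log_{k-1}n\rceil}\le n^{-\alpha}$. The one case needing a touch of care is when $\lceil\alpha\log_{k-1}n\rceil > s-t$, i.e. $m$ small, but there the probability is $0$ by the girth bound as noted, so the two cases together cover everything.

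\begin{remark}
This is the natural adaptation of the ``local tree-like'' lemma used by Alon et al.~\cite{alon2007non}; the only input is that balls of radius $\lceil\alpha\log_{k-1}n\rceil$ are trees, which is exactly Assumption~\ref{assump: 2}.
\end{remark}
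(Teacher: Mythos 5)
Your proposal is correct and is essentially the paper's own argument: split on whether $s-t$ is below the girth (probability $0$), and otherwise use that the radius-$\lceil\alpha\log_{k-1}n\rceil$ ball around $V(t)$ is a tree, so the non-backtracking walk's final $\lceil\alpha\log_{k-1}n\rceil$ steps must trace the unique path from a leaf to the center, costing $(k-1)^{-\lceil\alpha\log_{k-1}n\rceil}\leq n^{-\alpha}$. The paper phrases this by conditioning on $V(s-\lceil\alpha\log_{k-1}n\rceil)\in U_v$ rather than summing over states at that time, but the content is identical.
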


\begin{proof}
	This proof is the same as \cite{alon2007non}. We rewrite the proof for our setting. First, if $s-t < 2\lceil \alpha \log_{k-1} n \rceil$, then $\Pr(V(s) = V(t)~|~V(t), V(t-1)) = 0$.
	
	Otherwise $s-t \geq 2\lceil \alpha \log_{k-1} n \rceil$. Let $v$ be an arbitrary vertex. Notice that the neighborhood of $v$ up to distance $h=\lceil \alpha \log_{k-1} n \rceil$ is a $k$-regular tree. Let $U_v$ denote the set of $k(k-1)^{h-1}$ leaves of the tree. Since the walk on the graph is non-backtracking, $V(s)=v$ implies that $V(s - \lceil \alpha \log_{k-1} n \rceil) \in U_v$. Therefore
	\begin{equation}
	\begin{split}
	\Pr(V(s) = v~|~V(t) = v, V(t-1)) &= \Pr(V(s) = v, V(s - \lceil \alpha \log_{k-1} n \rceil) \in U_v ~|~V(t) = v, V(t-1))\\
	&\leq \Pr(V(s) = v~|~V(s - \lceil \alpha \log_{k-1} n \rceil) \in U_v, V(t) = v, V(t-1))\\
	&\leq \left(\dfrac{1}{k-1}\right)^{\lceil \alpha \log_{k-1} n \rceil}\leq \dfrac{1}{n^{\alpha}}
	\end{split}
	\end{equation}
	and lemma follows since the above is true for all vertex $v$.
	
\end{proof}

\begin{lemma}\label{lem:key2}
	For all $s=0,1,\cdots, \lfloor c\log n\rfloor - 1$
	\begin{equation}\label{eq66}
	\E[I_j^{(i)}(s)~|~\mathcal{G}_j] \leq \left(\dfrac{\beta_i}{n}\right)^2 + \dfrac{2s}{n^{\alpha}} + \dfrac{2s}{n} ~~~~~~\text{a.s.}
	\end{equation}
	for all $i< i^*$.
	
	Furthermore, let $L:=\left\lfloor 2\left(\frac{c\log(k-1)}{2\alpha} + 1\right) \right\rfloor$, we have
	\begin{equation}\label{eq67}
	\E[I_j^{(i)}(s)~|~\mathcal{G}_j] \leq \left(\dfrac{\beta_{i}}{n}\right)^2~~~~~~\text{a.s.}
	\end{equation}
	for all $i\geq i^*$.
\end{lemma}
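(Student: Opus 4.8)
The plan is to condition on the load vector $\mathbf{Q}(T_j)=\mathbf{q}$ (which is $\mathcal{G}_j$-measurable) and exploit three ingredients. First, since $T_j=j\lfloor c\log n\rfloor$ is \emph{deterministic}, the randomness driving the two walkers after $T_j$ is independent of $\mathcal{G}_j$; moreover each walker restarts at a uniform random directed edge, and the uniform measure on directed edges is stationary for the non-backtracking walk on a $k$-regular graph, so $W_1(T_j+s)$ and $W_2(T_j+s)$ are each marginally uniform on $V$ and independent of one another, for every $s$. Second, the elementary deterministic fact that a ball added during $[T_j,T_j+s)$ is placed at a vertex \emph{occupied} by one of the walkers in that window, so that for every vertex $v$, $Q_v(T_j+s)\le Q_v(T_j)+n_v(s)$ where $n_v(s)$ is the number of pairs $(t,l)$ with $T_j\le t<T_j+s$ and $W_l(t)=v$, and in particular $Q_v(T_j+s)=Q_v(T_j)$ whenever $n_v(s)=0$. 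Third, if $\nu^{(r_i)}(\mathbf q)>\beta_i$ the indicator vanishes, so we may assume $\nu^{(r_i)}(\mathbf q)\le\beta_i$ throughout.

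For $i\ge i^*$ the mechanism is a girth bound on $n_v(s)$. On a graph of girth $g\ge 2\lceil\alpha\log_{k-1}n\rceil+1$ any closed non-backtracking walk has length at least $g$, so a single walker returns to a fixed vertex at most $\lfloor(\lfloor c\log n\rfloor-1)/g\rfloor+1$ times inside a window of length $\lfloor c\log n\rfloor-1$; since $(\lfloor c\log n\rfloor-1)/g< c\log(k-1)/(2\alpha)$, adding the contributions of the two walkers and checking the floor arithmetic yields $n_v(s)\le L=\lfloor 2(c\log(k-1)/(2\alpha)+1)\rfloor$ for all $v$. Consequently $Q_{W_l(T_j+s)}(T_j+s)\ge r_{i+1}-1=r_i+L$ forces $W_l(T_j+s)\in\{v:q_v\ge r_i\}$, a set of size $\nu^{(r_i)}(\mathbf q)\le\beta_i$; independence and marginal uniformity of $W_1(T_j+s),W_2(T_j+s)$ then give $(\beta_i/n)^2$, which is \eqref{eq67}.

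For $i<i^*$ we have $r_{i+1}-1=r_i=i$ and the girth slack is unavailable, so instead we decompose the event. For each $l$ set $A_l=\{W_l(T_j+s)\in\{v:q_v\ge i\}\}$ and $C_l=\{W_l(T_j+s)\text{ is occupied during }[T_j,T_j+s)\text{ by some walker}\}$; by the deterministic fact above, $\{Q_{W_l(T_j+s)}(T_j+s)\ge i\}\subseteq A_l\cup C_l$, hence $\{I_j^{(i)}(s)=1\}\subseteq(A_1\cap A_2)\cup C_1\cup C_2$. Here $\Pr(A_1\cap A_2)\le(\beta_i/n)^2$ by independence and uniformity, and $\Pr(C_l)$ is split into the probability that walker $l$ itself revisits its time-$(T_j+s)$ position at an earlier time of the window --- at most $s/n^\alpha$ by summing Lemma \ref{lem:alon} over the at most $s$ candidate times (with a direct check of the argument at the restart step, where the ``previous vertex'' is the tail of the restart edge) --- and the probability that the \emph{other} walker occupies that position at one of the $s$ times of the window, which is at most $s/n$ since that walker is independent of walker $l$ and marginally uniform. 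A union bound gives $(\beta_i/n)^2+2s/n^\alpha+2s/n$, which is \eqref{eq66}.

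The step I expect to be most delicate is the girth count for $i\ge i^*$: one must verify that consecutive returns of a non-backtracking walk are genuinely separated by at least the girth, and then check that the specific constant $L$ is exactly large enough once the two walkers' contributions and all floors and ceilings are accounted for. The remaining work is bookkeeping: making fully rigorous, in the formal model reused from Section~\ref{sec:simp}, that the post-restart walkers are conditionally independent of $\mathcal{G}_j$ with marginally uniform positions, and handling the $s=0$ and earliest-time edge cases when invoking Lemma \ref{lem:alon}.
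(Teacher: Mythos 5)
Your proposal is correct and follows essentially the same route as the paper: condition on $\mathbf{Q}(T_j)$, split on the event (your $C_1\cup C_2$, the paper's $\mathcal{D}_{j,s}$) that a sampled vertex was revisited during the window, bound that event via Lemma \ref{lem:alon} plus independence/uniformity of the two walkers, and for $i\ge i^*$ use the girth to cap the per-vertex visit count by $L$ so the load at $T_j+s$ exceeds the load at $T_j$ by at most $L$. No substantive differences.
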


\begin{proof}
	At the time of $j$-th restart, $\{\mathbf{Q}(t)\}_{t\geq T_j}, \{\mathbf{W}(t) \}_{t\geq T_j}$ are conditionally independent of $\mathcal{G}_j$ given $\mathbf{Q}(T_j)$. Hence
	\begin{equation}\label{k02}
	\E[I_j^{(i)}(s)~|~\mathcal{G}_j] = \E[I_j^{(i)}(s)~|~\mathbf{Q}(T_j)]
	\end{equation}
	holds for all $i$.
	
	Define $\mathcal{D}_{j, s}$ to be the event that at least one of $W_1(T_j+s)$ and $W_2(T_j+s)$ was visited at least once between time $T_j$ and $T_j+s$. By Union Bound we have
	\begin{equation}
	\begin{split}
	\Pr(\mathcal{D}_{j, s}) &\leq \sum_{r=0}^{s-1}\Pr(W_1(T_j+r) = W_1(T_j+s)) + \sum_{r=0}^{s-1}\Pr(W_1(T_j+r) = W_2(T_j+s)) \\&~~~~+ \sum_{r=0}^{s-1}\Pr(W_2(T_j+r) = W_1(T_j+s)) + \sum_{r=0}^{s-1}\Pr(W_2(T_j+r) = W_2(T_j+s))\\
	&=2\sum_{r=0}^{s-1}\Pr(W_1(T_j+r) = W_1(T_j+s)) + 2\sum_{r=0}^{s-1}\Pr(W_1(T_j+r) = W_2(T_j+s))\\
	&\leq \dfrac{2s}{n^{\alpha}} + \dfrac{2s}{n}
	\end{split}
	\end{equation}
	It is clear that $\mathcal{D}_{j, s}$ is not dependent on the queue lengths at time $T_j$.
	
	For all $i$, if $\mathbf{Q}(T_j)=\mathbf{q}$ is such that $\nu^{(r_i)}(T_j)>\beta_{i}$, then
	\begin{equation}\label{k12}
	\E[I_j^{(i)}(s)~|~\mathbf{Q}(T_j) = \mathbf{q}] = 0
	\end{equation}
	by definition in \eqref{defI}. 
	
	For $i<i^*$, if $\mathbf{Q}(T_j)=\mathbf{q}$ is such that $\nu^{(r_i)}(T_j)\leq \beta_{i}$, then
	\begin{equation}\label{k22}
	\begin{split}
	\E[I_j^{(i)}(s)~|~\mathbf{Q}(T_j) = \mathbf{q}] &= \Pr(Q_{W_l(T_j+s)}(T_j+s)\geq r_{i+1} - 1~\forall l=1,2|~\mathbf{Q}(T_j) = \mathbf{q})\\
	&= \Pr(Q_{W_l(T_j+s)}(T_j+s)\geq i~\forall l=1,2|~\mathbf{Q}(T_j) = \mathbf{q})\\
	&= \Pr(Q_{W_l(T_j+s)}(T_j+s)\geq i~\forall l=1,2,~\mathcal{D}_{j, s}^c|~\mathbf{Q}(T_j) = \mathbf{q}) \\
	&~~~~+ \Pr(Q_{W_l(T_j+s)}(T_j+s)\geq i~\forall l=1,2,~\mathcal{D}_{j, s}|~\mathbf{Q}(T_j) = \mathbf{q})\\
	&= \Pr(q_{W_l(T_j+s)}\geq i~\forall l=1,2,~\mathcal{D}_{j, s}^c|~\mathbf{Q}(T_j) = \mathbf{q}) \\
	&~~~~+ \Pr(Q_{W_l(T_j+s)}(T_j+s)\geq i~\forall l=1,2,~\mathcal{D}_{j, s}|~\mathbf{Q}(T_j) = \mathbf{q})\\
	&\leq \Pr(q_{W_l(T_j+s)}\geq i~\forall l=1,2|~\mathbf{Q}(T_j) = \mathbf{q}) + \Pr(\mathcal{D}_{j, s}|~\mathbf{Q}(T_j) = \mathbf{q})\\
	&= \Pr(q_{W_l(T_j+s)}\geq i~\forall l=1,2) + \Pr(\mathcal{D}_{j, s})\\
	&\leq \left( \dfrac{\beta_i}{n}\right)^2  + \dfrac{2s}{n^{\alpha}} + \dfrac{2s}{n}
	\end{split}
	\end{equation}
	
	Combining \eqref{k02}\eqref{k12}\eqref{k22}, we conclude that
	\begin{equation}
	\E[I_j^{(i)}(s)~|~\mathcal{G}_j] \leq \left( \dfrac{\beta_i}{n}\right)^2 + \dfrac{2s}{n^{\alpha}} + \dfrac{2s}{n}
	\end{equation}
	for $i<i^*$
	
	To prove the second statement, we notice that it is impossible for a random walker to visit one vertex strictly more than $\dfrac{\lfloor c\log n\rfloor}{2\lceil\alpha \log_{k-1}n\rceil +1 } + 1$ times within $\lfloor c\log n\rfloor$ timestamps. Hence for any $0\leq s < \lfloor c\log n\rfloor$, both $W_1(T_j+s)$ and $W_2(T_j+s)$ has been visited no more than
	\begin{equation}
	2\cdot \left(\dfrac{\lfloor c\log n\rfloor}{2\lceil\alpha \log_{k-1}n\rceil +1 } + 1\right)<2\left(\dfrac{c \log(k-1)}{\alpha} + 1 \right)
	\end{equation}
	times.
	
	Therefore, $Q_{W_l(T_j+s)}(T_j+s)\leq q_{W_l(T_j+s)} + L$ almost surely for $l=1,2$. 
	
	For $i\geq i^*$, if $\mathbf{Q}(T_j)=\mathbf{q}$ is such that $\nu^{(r_i)}(T_j)\leq \beta_{i}$, then
	\begin{equation}\label{k32}
	\begin{split}
	\E[I_j^{(i)}(s)~|~\mathbf{Q}(T_j) = \mathbf{q}] &= \Pr(Q_{W_l(T_j+s)}(T_j+s)\geq r_{i+1} - 1~\forall l=1,2~|~\mathbf{Q}(T_j) = \mathbf{q})\\
	&= \Pr(Q_{W_l(T_j+s)}(T_j+s)\geq r_{i} + L~\forall l=1,2~|~\mathbf{Q}(T_j) = \mathbf{q})\\
	&\leq \Pr(q_{W_l(T_j+s)} + L\geq r_i+L~\forall l=1,2~|~\mathbf{Q}(T_j) = \mathbf{q})\\
	&= \Pr(q_{W_l(T_j+s)} \geq r_i~\forall l=1,2)\\
	&= \Pr(q_{W_1(T_j+s)} \geq r_i) \Pr(q_{W_2(T_j+s)} \geq r_i)\\
	&\leq \left(\dfrac{\beta_{i}}{n}\right)^2
	\end{split}
	\end{equation}
	
	Combining \eqref{k02}\eqref{k12}\eqref{k32}, we conclude that
	\begin{equation}
	\E[I_j^{(i)}(s)~|~\mathcal{G}_j] \leq \left(\dfrac{\beta_{i}}{n}\right)^2
	\end{equation}
	for $i\geq i^*$.
	
\end{proof}

\subsection{Proof of the Main Theorem}
Different from the proof of the previous scheme, in this scheme we have a deterministic inter-reset time. Hence, reset happens exactly $N = \lceil \frac{n}{\lfloor c\log n\rfloor }\rceil$ many times. For large $n$, we have
\begin{equation}
N\leq \dfrac{en}{2c\log n}
\end{equation}

Conditioned on $\mathcal{E}_i$, we have $\sum_{l=0}^{N-1} Z_l^{(i)}$ to be the number of occasions where both sampled bins have load at least $r_{i+1} - 1$ before time $T_{N}\geq n$. Thus $\sum_{l=0}^{N-1} Z_l^{(i)} \geq \mu^{(r_{i+1})}(n)$ holds on $\mathcal{E}_i$.

Let $i^*> 9$ be determined later. Set $\beta_{9} = \dfrac{n}{3e}$. Then $\nu^{(r_9)}(n)\leq \beta_{9}$ is always true (as there cannot be more than $\frac{n}{3e}$ bins with load at least $9$). Hence $\Pr(\mathcal{E}_{9}) = 1$.

For $i\geq 9$, we have
\begin{align}
\Pr(\mathcal{E}_{i+1}^c \cap \mathcal{E}_i) &= \Pr(\nu^{(r_{i+1})}(n) > \beta_{i+1}, \mathcal{E}_i )\\
&\leq \Pr(\mu^{(r_{i+1})}(n) > \beta_{i+1}, \mathcal{E}_i )\\
&\leq \Pr\left(\sum_{j=0}^{N-1} Z_j^{(i)} > \beta_{i+1}, \mathcal{E}_i\right)\\
&\leq \Pr\left(\sum_{j=0}^{N-1} Z_j^{(i)} \geq \beta_{i+1}\right)
\end{align}

Now, for $9\leq i < i^*$, define $\beta_{i+1} = 2en\left(\frac{\beta_i}{n}\right)^2$, where $i^*$ is defined to be the smallest $i$ such that
\begin{equation}
\left(\frac{\beta_{i-1}}{n}\right)^2< \dfrac{2c\log n}{n^{\gamma}}
\end{equation}
where $\gamma:=\frac{1}{2}\wedge \alpha$. It can be shown that $i^*\leq \log_2\log n + \Theta(1)$.

For $9\leq i<i^*-1$ we have
\begin{equation}
\left(\frac{\beta_i}{n}\right)^2\geq \dfrac{2c\log n}{n^{1\wedge\alpha}}\geq \dfrac{c\log n}{n^\alpha} + \dfrac{c\log n}{n}
\end{equation}

Thus for $9\leq i<i^*-1$,
\begin{equation}
\begin{split}
\E[Z_j^{(i)}~|~\mathcal{G}_j] &\leq \sum_{s=0}^{\lfloor c\log n\rfloor - 1}\left[\left(\frac{\beta_i}{n}\right)^2 + \dfrac{2s}{n^\alpha} + \dfrac{2s}{n} \right]\\
&\leq c\log n \left(\frac{\beta_i}{n}\right)^2 + \dfrac{(c\log n)^2}{n^\alpha} + \dfrac{(c\log n)^2}{n}\\
&\leq 2c\log n \left(\frac{\beta_i}{n}\right)^2=:m_i
\end{split}
\end{equation}

For $9\leq i<i^*-1$ We check that $\beta_{i+1}=2\cdot\frac{en}{2c\log n}\cdot c\log n\left(\frac{\beta_i}{n}\right)^2 \geq 2Nm_i$. Hence applying Corollary \ref{berncor} we obtain
\begin{equation}
\begin{split}
\Pr(\mathcal{E}_{i+1}^c \cap \mathcal{E}_i)&\leq \Pr\left(\sum_{j=0}^{N-1} Z_j^{(i)} \geq \beta_{i+1} \right)\leq \exp\left(-\dfrac{3\beta_{i+1}}{16c\log n}\right)\\
&\leq \exp\left(-\dfrac{3\cdot 4ecn^{1-\gamma}\log n}{16c\log n}\right) = \exp(-\dfrac{12e}{16}n^{1-\gamma})\\
&\leq \exp(-n^{1/2})
\end{split}\label{www2}
\end{equation}
for $9\leq i< i^*-1$. 

Now we have
\begin{equation}
\begin{split}
\E[Z_j^{(i^*-1)}~|~\mathcal{G}_j] &\leq \sum_{s=0}^{\lfloor c\log n\rfloor - 1}\left[\left(\frac{\beta_{i^*-1}}{n}\right)^2 + \dfrac{2s}{n^\alpha} + \dfrac{2s}{n} \right]\\
&\leq c\log n \left(\frac{\beta_{i^*-1}}{n}\right)^2 + \dfrac{(c\log n)^2}{n^\alpha} + \dfrac{(c\log n)^2}{n}\\
&\leq \dfrac{2(c\log n)^2}{n^{\gamma}} + \dfrac{(c\log n)^2}{n^\alpha} + \dfrac{(c\log n)^2}{n}\\
&\leq \dfrac{4(c\log n)^2}{n^{\gamma}}=:m_{i^*-1}
\end{split}
\end{equation}

Set $\beta_{i^*} =4ecn^{1-\gamma}\log n$. We have $\beta_{i^*}= 2\cdot \frac{en}{2c\log n}\cdot  \frac{4(c\log n)^2}{n^{\gamma}}\geq 2Nm_{i^*-1}$. By applying Corollary \ref{berncor} again we have
\begin{align}
&~~~~\Pr(\mathcal{E}_{i^*}^c\cap \mathcal{E}_{i^*-1})\leq\Pr\left(\sum_{j=0}^{N-1} Z_j^{(i^*-1)} \geq \beta_{i^*}\right) \\
&=\exp\left(-\dfrac{3\beta_{i^*}}{16c\log n}\right)=\exp\left(-\dfrac{3\cdot 4ecn^{1-\gamma}\log n}{16c\log n}\right) = \exp(-\dfrac{12e}{16}n^{1-\gamma})\\
&\leq \exp(-n^{1/2})
\label{xxx2}
\end{align}

Now for $i^*\leq i < i^{**}-1$, set $\beta_{i+1} = en\left(\frac{\beta_{i}}{n}\right)^2$, where $i^{**}$ is the smallest $i\geq i^*+1$ such that $en\left(\frac{\beta_{i-1}}{n}\right)^2\leq 8c(\log n)^2$. It can be shown that $i^{**} - i^*\leq \Theta(1)$.

Now, we use \eqref{eq67} to bound the conditional expectation for $i\geq i^*$:
\begin{equation}
\E[Z_j^{(i)}~|~\mathcal{G}_j] \leq c\log n\left(\dfrac{\beta_{i}}{n}\right)^2=:m_i
\end{equation}

For $i^*\leq i < i^{**}-1$ we check that 
\begin{equation}
\beta_{i+1} = 2\cdot \dfrac{en}{2}\cdot c\log n \left(\dfrac{\beta_{i}}{n}\right)^2 \geq 2Nm_{i}
\end{equation}

Hence for $i^*\leq i < i^{**}-1$ we can apply Corollary \ref{berncor} and obtain
\begin{align}
&~~~~\Pr(\mathcal{E}_{i+1}^c \cap \mathcal{E}_{i}) \\
&\leq  \Pr\left(\sum_{j=0}^{N-1} Z_j^{(i)} \geq \beta_{i+1}\right)\\
&\leq \exp\left(-\dfrac{3\beta_{i+1}}{16c\log n} \right) \leq \exp\left(-\dfrac{3\cdot 8c(\log n)^2}{16c\log n} \right)\\
&\leq n^{-1.5}
\label{yyy2}
\end{align}

Set $\beta_{i^{**}} = 8c(\log n)^2$. We still have $\beta_{i^{**}}\geq en\left(\frac{\beta_{i^{**}-1}}{n}\right)^2\geq 2Nm_{i^{**}-1}$. Applying Corollary \ref{berncor} again we have
\begin{equation}\label{zzz2}
\begin{split}
&~~~~\Pr(\mathcal{E}_{i^{**}}^c \cap \mathcal{E}_{i^{**}-1})\\
&\leq \Pr\left(\sum_{j=0}^{N-1} Z_j^{(i^{**}-1)} \geq \beta_{i^{**}}\right)\leq \exp\left( -\dfrac{3\beta_{i^{**}}}{16c\log n}\right)\\
&= \exp\left( -\dfrac{3\cdot 8c(\log n)^2}{16c\log n}\right) = n^{-1.5}
\end{split}
\end{equation}

Set $\beta_{i^{**}+1} = 0.8$, by Markov Inequality we have
\begin{equation}\label{istarp22}
\begin{split}
&~~~~\Pr(\mathcal{E}_{i^{**}+1}^c \cap \mathcal{E}_{i^{**}})\\
&\leq \Pr\left(\sum_{j=0}^{N-1} Z_j^{(i^{**})} \geq \beta_{i^{**}+1}\right)\leq \dfrac{\E\left[\sum_{j=0}^{N-1} Z_j^{(i^{**})} \right] }{\beta_{i^{**}+1}}\\&\leq \dfrac{Nc\log n \left(\frac{\beta_{i^{**}}}{n}\right)^2 }{\beta_{i^{**}+1}}\leq\dfrac{\frac{en}{2} \left(\frac{\beta_{i^{**}}}{n}\right)^2 }{\beta_{i^{**}+1}}=40e\cdot\dfrac{c^2(\log n)^2}{n}
\end{split}
\end{equation}

Hence, combining \eqref{www2}\eqref{xxx2}\eqref{yyy2}\eqref{zzz2}\eqref{istarp22} we obtain
\begin{align}
\Pr(\mathcal{E}_{i^{**}+1}^c ) &= \Pr(\mathcal{E}_{i^{**}+1}^c\cap \mathcal{E}_9)\leq \Pr\left(\bigcup_{i=9}^{i^{**}}(\mathcal{E}_{i+1}^c\cap \mathcal{E}_i) \right)\leq \sum_{i=9}^{i^{**}}\Pr(\mathcal{E}_{i+1}^c\cap \mathcal{E}_i)\\
&\leq (i^*-9)\exp(-n^{1/2}) + (i^{**} - i^*)n^{-1.5} + 40e\cdot\dfrac{c^2(\log n)^2}{n}\\
&=o(1)
\end{align}

Therefore, with high probability, $\mathcal{E}_{i^{**}+1}$ is true, that is, no bin has load exceeding $$r_{i^{**} + 1} = i^* + (i^{**} - i^* + 1)(L+1) \leq \log_2 \log n + \Theta(1)$$proving the result.

\section{The Third Scheme: Model}
It turns out that we can design a scheme that is even simpler: When some stronger assumptions of the underlying graph are met, we do not need the random walks to reset at all! Specifically, the third scheme is as follows: Place the $t$-th ball into the least loaded bin of $W_1(t)$ and $W_2(t)$, where $W_1$ and $W_2$ are independent non-backtracking random walks on graph $G$, starting from independently uniform random vertices.

The idea comes from the fact that, under certain conditions for a graph, a non-backtracking random walk can mix within time $c\log n$, i.e. starting from any initial distribution on the graph, after time $c\log n$, the distribution of the random walker's location is close to uniform random. In this aspect, the third scheme is closely related to the second scheme.

\begin{defn}[Expander Graph]\label{def:expander}
	\cite{alon2007non} Let $\{G^{(n)}\}_{n\in I}$ be a sequence of $k$-regular graphs with $n$ vertices. Let $k=\lambda_1^{(n)}\geq \lambda_2^{(n)}\geq \cdots\geq \lambda_n^{(n)}$ be the eigenvalues of the adjacency matrix of $G^{(n)}$. Define $\lambda^{(n)} = \max\{\lambda_2^{(n)}, |\lambda_n^{(n)}| \}$. $\{G^{(n)}\}$ is called an expander graph sequence if the second largest eigenvalues $\lambda^{(n)}$ of the adjacency matrices of $G^{(n)}$ satisfies
	\begin{equation*}
	\lambda:=\limsup_{n\rightarrow\infty} \lambda^{(n)} < k
	\end{equation*}
	
\end{defn}

\begin{assump}[$G$ is a High Girth Expander]\label{assump: 3}
	The graph sequence $\{G^{(n)}\}$ is a $k$-regular expander graph sequence with girth of $G^{(n)}$ greater than $2\lceil \alpha\log_{k-1}n\rceil + 1$, where $\alpha$ is a positive constant.
\end{assump}

Such graphs do exist.

\begin{example}[LPS Graph]
	Lubotzky et al. \cite{lubotzky1988ramanujan} constructed a sequence of $(p+1)$-regular expander graphs with $\lambda \leq 2\sqrt{p}$ and girth greater than $\frac{4}{3}\log_{p}n$ asymptotically, where $p\geq 3$ is a prime number.
\end{example}

\begin{example}
	For even number $k\geq 4$, Gamburd et al. \cite{gamburd2009girth} show that the $k$-regular random Cayley graph of $\mathrm{SL}_2(\mathbb{F}_p)$ asymptotically almost surely has girth at least $(\frac{1}{3}-o(1))\log_{k-1}(|G|)$ as $p\rightarrow\infty$. In a separate paper by Bourgain and Gamburd \cite{bourgain2008uniform}, they show that $k$-regular random Cayley graph of $\mathrm{SL}_2(\mathbb{F}_p)$ are a.a.s. expanders.
\end{example}

\section{The Third Scheme: Main Results}\label{sec: 3}
\begin{thm}\label{thm:3works}
	Under Assumption \ref{assump: 3}, the maximum load achieved by the third scheme is less than $\log_2\log n + \Theta(1)$ with high probability.
\end{thm}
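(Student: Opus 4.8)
The plan is to follow the proof of Theorem~\ref{thm:2works} almost verbatim, with the genuine restarts replaced by the fast mixing of the non-backtracking walk on the expander. As before I partition time into $N=\lceil n/T_0\rceil$ epochs $[T_j,T_{j+1})$ of length $T_0:=\lfloor c\log n\rfloor$ with $T_j=jT_0$, but now the walkers are never re-randomised, so $\Psi_l(T_j)$ is simply wherever the walk has arrived. The one genuinely new ingredient is a \emph{mixing estimate}: there are constants $c_0,\rho,C$ depending only on $k$ and the spectral gap of Definition~\ref{def:expander} such that, for $c\ge c_0$, every directed edge $e$, every vertex $v$, and every $t\ge T_0$,
\begin{equation}
\Pr\big(W(t)=v \,\big|\, \Psi(0)=e\big)\ \le\ \frac{C}{n},
\end{equation}
and in fact the left-hand side equals $\tfrac1n+O\big(\mathrm{poly}(\log n)\cdot n^{-c\log((k-1)/\rho)}\big)$. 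This comes from the spectral theory of the Hashimoto (non-backtracking) operator $B$ on the $nk$ directed edges: for a $k$-regular graph $B\mathbf 1=(k-1)\mathbf 1=\mathbf 1^{\trans}B$, while on the complement of the Perron line the eigenvalues of $B$ are roots of $\mu^2-\lambda_i\mu+(k-1)=0$ and hence have modulus at most some $\rho<k-1$ whenever $\lambda<k$; iterating $B/(k-1)$ for $\lfloor c\log n\rfloor$ steps, absorbing a $\mathrm{poly}(t)$ factor coming from the non-normality of $B$, and summing over the $k$ edges pointing into $v$ yields the claim. I expect this step to be the main obstacle; everything after it is bookkeeping.

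Given the estimate, I would prove the analogue of Lemma~\ref{lem:key2} by conditioning on $\mathcal G_{j-1}=\mathcal F_{T_{j-1}}$ rather than on $\mathcal G_j$: for $0\le s<T_0$ the vertex $W_l(T_j+s)$ is reached by $T_0+s\ge T_0$ non-backtracking steps from the $\mathcal G_{j-1}$-measurable edge $\Psi_l(T_{j-1})$, so its conditional law is $C/n$-bounded pointwise, and $W_1,W_2$ remain conditionally independent. As in Lemma~\ref{lem:key2} I split into a bulk regime $i<i^*$ and a tail regime $i\ge i^*$. In the bulk regime I introduce the revisit event $\mathcal D'_{T_j+s}$ that the bin $W_l(T_j+s)$ received a ball during the window $(T_{j-1},T_j+s]$; its conditional probability is $O(T_0 n^{-\alpha})+O(T_0/n)=O(\log n\cdot n^{-\gamma})$, with $\gamma=\tfrac12\wedge\alpha$ --- same-walker coincidences are handled by Lemma~\ref{lem:alon}, the short-range terms vanishing because the girth exceeds $2\lceil\alpha\log_{k-1}n\rceil+1$, and cross-walker coincidences by the pointwise mixing bound together with independence --- and off $\mathcal D'_{T_j+s}$ the load of that bin is unchanged since $T_{j-1}$, giving $\E[I_j^{(i)}(s)\mid\mathcal G_{j-1}]\le (C\beta_i/n)^2+O(\log n\cdot n^{-\gamma})$. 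In the tail regime I instead use that high girth forces each walker to visit any vertex at most $\tfrac{2T_0}{2\lceil\alpha\log_{k-1}n\rceil+1}+1$ times inside a window of length $<2T_0$, so $Q_v(T_j+s)\le Q_v(T_{j-1})+L$ with $L=\Theta(1)$ as in Lemma~\ref{lem:key2}; with the same $(L+1)$-inflated definition of $r_i$ this gives $\E[I_j^{(i)}(s)\mid\mathcal G_{j-1}]\le (C\beta_i/n)^2$. In both regimes monotonicity of $\nu^{(r_i)}$ on $\mathcal E_i$ supplies $\nu^{(r_i)}(T_{j-1})\le\beta_i$, which is all the definition of $I_j^{(i)}(s)$ needs.

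A minor technical point is that the per-epoch sums $Z_j^{(i)}=\sum_s I_j^{(i)}(s)$ are $\mathcal G_{j+1}$-measurable, whereas I only control $\E[Z_j^{(i)}\mid\mathcal G_{j-1}]$, so consecutive epochs overlap and Corollary~\ref{berncor} does not apply directly. I would fix this by splitting the epochs into even and odd: each of $\{Z^{(i)}_{2\ell}\}_\ell$ and $\{Z^{(i)}_{2\ell+1}\}_\ell$ is adapted to a re-indexing of $\{\mathcal G_{2\ell-1}\}$ and $\{\mathcal G_{2\ell}\}$ respectively, with conditional means $m_i=C^2T_0(\beta_i/n)^2+O((\log n)^2 n^{-\gamma})$ (bulk) or $C^2T_0(\beta_i/n)^2$ (tail), so Corollary~\ref{berncor} --- and Markov's inequality at the bottom level --- applies to each parity class separately. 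Epoch $0$ is harmless: since the uniform law on directed edges is stationary for the non-backtracking walk on a regular graph, $W_l(s)$ is exactly uniform throughout epoch $0$ (which also carries only $O(\log n)$ balls). From here the recursion $\beta_{i+1}=\Theta(n)\cdot(\beta_i/n)^2$ through the bulk regime, followed by $\Theta(1)$ more levels with $\beta_i=\Theta((\log n)^2)$ and then a level with $\beta=\Theta(1)$, is exactly the one in the proof of Theorem~\ref{thm:2works} up to constants (which only enter the additive $\Theta(1)$), and a union bound over the $\log_2\log n+\Theta(1)$ levels and the two parities gives $\Pr(\mathcal E_{\mathrm{last}}^c)=o(1)$; hence with high probability the maximum load is at most $r_{\mathrm{last}}\le\log_2\log n+\Theta(1)$.
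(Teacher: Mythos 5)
Your proposal is correct and follows essentially the same route as the paper: the same checkpoints $T_j=j\lfloor c\log n\rfloor$, conditioning on $\mathcal G_{j-1}$ rather than $\mathcal G_j$, the same decomposition into a revisit event over the window $(T_{j-1},T_j+s]$ (same-walker collisions via Lemma~\ref{lem:alon}, cross-walker collisions via mixing plus conditional independence), the same girth-based bound $L=\Theta(1)$ on per-window multiplicities in the tail regime, the same even/odd epoch splitting to make Corollary~\ref{berncor} applicable, the same treatment of epoch $0$ via stationarity, and the same $\beta_i$ recursion. The only point of divergence is your proof of the mixing estimate via the spectrum of the Hashimoto operator on directed edges (with the $\mathrm{poly}(t)$ non-normality correction), whereas the paper's Lemma~\ref{lem:mixing} obtains the same $O(1/n)$ pointwise bound by bounding the Chebyshev-type polynomials $q_t$ attached to the vertex-space walk-counting matrices $A^{(t)}$; both are valid and yield the required $C/n$ control after $\lfloor c\log n\rfloor$ steps.
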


The proof idea for the third scheme is similar to the one for the second scheme. The basic framework is Azar et al.'s iterative bounding technique outlined in \cite{azar1999balanced}. For the concentration results, we divide the time into equally spaced ``mixing periods". When we look at only the even (resp. odd) periods, the mixing effect enables us to define a martingale. We bound the number of balls with height $i+1$ respectively for even and odd periods with martingale concentration inequality, and we finish up with a union bound. 

\subsection{Preliminary Results}
Define $\mathcal{F}_t = \sigma(\{W_1(s), W_2(s) \}_{s=0}^{t} )$, i.e. the smallest $\sigma$-algebra generated by the random walker paths before time $t$. As a result, $\mathbf{Q}(t)$ is measurable with respect to $\mathcal{F}_t$.

We first provide a mixing time result.
\begin{lemma}\label{lem:mixing}
	Let $V^{(n)}(t)$ be a non-backtracking random walk on expander graph $G^{(n)}$, then there exist constant $c>0$ (which only depends on $\lambda$ and $k$) such that
	\begin{equation}\label{mixingeq}
	\max_{u_0, u_1,v\in G^{(n)}}\Pr(V^{(n)}(t+1) = v~|~V^{(n)}(0) = u_0, V^{(n)}(1) = u_1 ) \leq \dfrac{2}{n}~~~~~~~~~\forall t\geq \T
	\end{equation}
	holds for all large $n$.
\end{lemma}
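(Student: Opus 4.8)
The plan is to reduce the lemma to a spectral‑gap estimate for the non‑backtracking walk operator, which is exactly the mechanism behind the ``non‑backtracking walks mix faster'' phenomenon. Let $m=|E|=kn/2$ and let $B=B^{(n)}$ be the $2m\times 2m$ nonnegative matrix indexed by the directed edges of $G^{(n)}$, with $B_{ef}=1$ iff $\mathrm{head}(e)=\mathrm{tail}(f)$ and $f\neq\bar e$; then $(B^s)_{ef}$ counts non‑backtracking walks of length $s$ from $e$ to $f$. Every row sum and column sum of $B$ equals $k-1$, so $P:=B/(k-1)$ is doubly stochastic on directed edges, its stationary distribution is uniform on the $2m$ directed edges, and pushing this forward to the head vertex gives the uniform distribution on $V$ (each vertex is the head of exactly $k$ directed edges and $k/(2m)=1/n$). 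Since $G^{(n)}$ is connected, the Perron eigenvalue $k-1$ of $B$ is simple; write $\Pi=\tfrac1{2m}\mathbf 1\mathbf 1^{\trans}$ for the associated rank‑one projection. Conditioning on $V(0)=u_0,V(1)=u_1$ places us at the directed edge $e_0=(u_0,u_1)$, and $\Pr(V(t+1)=v\mid V(0)=u_0,V(1)=u_1)=\sum_{f:\,\mathrm{head}(f)=v}(P^{t})_{e_0 f}$; the $\Pi$‑part of this sum is exactly $k/(2m)=1/n$, so the whole argument reduces to showing the complementary part is at most $1/n$ once $t\ge\T$.

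Next I would pin down the spectral gap of $B$. By the Ihara--Bass / Kotani--Sunada correspondence, the $2m$ eigenvalues of $B$ are $\pm1$ (with combined multiplicity $n(k-2)$) together with, for each eigenvalue $\lambda_i$ of the adjacency matrix of $G^{(n)}$, the two roots $\mu$ of $\mu^2-\lambda_i\mu+(k-1)=0$. The eigenvalue $\lambda_1=k$ contributes $k-1$ and $1$; any $\lambda_i$ with $|\lambda_i|\le 2\sqrt{k-1}$ contributes $|\mu|=\sqrt{k-1}$; and any $\lambda_i$ with $2\sqrt{k-1}<|\lambda_i|<k$ contributes $|\mu|=\tfrac12\bigl(|\lambda_i|+\sqrt{\lambda_i^2-4(k-1)}\bigr)$, which is increasing in $|\lambda_i|$ and stays strictly below $k-1$ as long as $|\lambda_i|<k$. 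The expander hypothesis supplies a constant $\lambda'\in(\lambda,k)$ with $\lambda^{(n)}\le\lambda'$ for all large $n$, so every eigenvalue of $B^{(n)}$ other than $k-1$ has modulus at most $\rho=\rho(k,\lambda)<k-1$, where $\rho$ is the largest modulus of a root of $\mu^2-\lambda_i\mu+(k-1)$ over $|\lambda_i|\le\lambda'$ (a quantity that is $\ge\sqrt{k-1}\ge 1$, and $<k-1$ precisely because $\lambda'<k$).

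I would then convert this into a mixing bound. Since $\Pi$ commutes with $P$ and $P\Pi=\Pi$, we get $P^{t}-\Pi=(P-\Pi)^{t}$, and $P-\Pi$ has spectral radius $\rho/(k-1)<1$. As $B$ is not normal, this does not immediately give $\|(P-\Pi)^{t}\|\le(\rho/(k-1))^{t}$: the clean route is to expand powers of the non‑backtracking operator in Chebyshev polynomials of the adjacency matrix, as in \cite{alon2007non}, which yields $\|(P-\Pi)^{t}\|_{2\to2}\le\mathrm{poly}(t)\,(\rho/(k-1))^{t}$, and in any case the crude Jordan‑form estimate $\|(P-\Pi)^{t}\|_{2\to2}\le\mathrm{poly}(n)\,(\rho/(k-1))^{t}$ already suffices. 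Either bound forces every entry of $P^{t}-\Pi$ to be at most $\mathrm{poly}(n)\,(\rho/(k-1))^{t}$, hence
\[
\Bigl|\Pr(V(t+1)=v\mid V(0)=u_0,V(1)=u_1)-\tfrac1n\Bigr|\ \le\ k\cdot\mathrm{poly}(n)\,\Bigl(\tfrac{\rho}{k-1}\Bigr)^{t}.
\]
Finally, because $\rho/(k-1)<1$, I would fix $c=c(k,\lambda)$ large enough that $k\cdot\mathrm{poly}(n)\,(\rho/(k-1))^{\T}\le 1/n$ for all large $n$ -- possible since the left‑hand side is $n^{O(1)}e^{-c\log((k-1)/\rho)\log n}$, which drops below $1/n$ once $c\log\tfrac{k-1}{\rho}$ exceeds the polynomial degree plus one -- so that the conditional probability is $\le 1/n+1/n=2/n$ for all $t\ge\T$, all large $n$, and all $u_0,u_1,v\in G^{(n)}$.

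The main obstacle is precisely the passage from spectral radius to operator norm in the previous paragraph: the non‑backtracking operator is not self‑adjoint, so one must either invoke the explicit Chebyshev‑polynomial expansion of its powers or track its Jordan structure, rather than simply reading off $\rho^{t}$. Everything else is routine bookkeeping -- the uniform push‑forward of the edge‑stationary measure, and the elementary fact that the roots of $\mu^2-\lambda'\mu+(k-1)$ have modulus $<k-1$ exactly when $\lambda'<k$, which is the one place the expansion assumption is used.
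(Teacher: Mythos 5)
Your route is sound and genuinely different in its bookkeeping from the paper's, though the two meet at the same technical crux. You work with the Hashimoto edge-adjacency operator $B$ on the $kn$ directed edges, use the Ihara--Bass factorization to read off its spectrum from the adjacency eigenvalues, and push the uniform edge-stationary measure forward to the vertices; the paper instead stays at the vertex level, using the recursion $A^{(t+1)}=AA^{(t)}-(k-1)A^{(t-1)}$ for the non-backtracking path-count matrices, the resulting Chebyshev-polynomial formula $\mu_i(t)=q_t\bigl(\lambda_i/(2\sqrt{k-1})\bigr)/\sqrt{k(k-1)^{t-1}}$ for the eigenvalues of the (symmetric) $t$-step transition matrix $\tilde P^{(t)}$, and the entrywise bound $\max_{u,v}|\tilde P^{(t)}_{uv}-1/n|\le\mu(t)$, finishing with the same $\frac{k}{k-1}$ correction for the conditioning on the first edge that you absorb by starting the walk at the directed edge $(u_0,u_1)$. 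Your treatment of the conditioning and of the stationary contribution $1/n$ is if anything cleaner, and you correctly identify the non-normality of $B$ as the one real obstacle; the ``clean route'' you point to (Chebyshev expansion, giving a $\mathrm{poly}(t)\,\beta^t$ decay) is precisely what the paper carries out via its Claim on $|q_t(x)|\le\sqrt{\tfrac{k-1}{k}}(t+1)\psi(|x|)^t+\tfrac{1}{\sqrt{k(k-1)}}(t-1)\psi(|x|)^{t-2}$, after which choosing $c$ with $\beta^{\T}=n^{-2}$ closes the argument exactly as you describe.

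One claim in your write-up does not hold as stated and should not be left as a fallback: the ``crude Jordan-form estimate'' $\|(P-\Pi)^t\|_{2\to 2}\le\mathrm{poly}(n)\,(\rho/(k-1))^t$ is not automatic for a non-normal matrix. The Jordan-form bound carries the condition number of the similarity transform to Jordan form, for which there is no a priori polynomial bound in the dimension; without an explicit resolvent or pseudospectral estimate for $B$, this step is a gap. Since your primary route (the Chebyshev expansion, which diagonalizes everything against the symmetric adjacency matrix and so sidesteps non-normality entirely) does give the needed $\mathrm{poly}(t)$ prefactor, the fix is simply to rely on that route alone and drop the Jordan-form alternative.
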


For the reset of the proof, Let $c>0$ be a constant such that \eqref{mixingeq} is true.

In this scheme we do not reset random walkers. However, we still define $T_j:=j\lfloor c\log n\rfloor$ for $j=0,1,\cdots$, except now $T_j$ is just some ``checkpoints" rather than times that some event happens. Accordingly, we define $\mathcal{G}_j:=\mathcal{F}_{T_j}$ for $j\in\mathbb{N}$. Define $\mathcal{G}_{-1} = \{\varnothing, \Omega \}$, i.e. the trivial $\sigma$-algebra. 

Let $\nu^{(i)}(t)$ denote the number of bins with load at least $i$ at time $t$, i.e. 
\begin{equation}
\nu^{(i)}(t) = \sum_{l=1}^n \mathbbm{1}_{\{Q_l(t) \geq i\}}
\end{equation}

Define the height of a ball $j$ to be number of balls in the bin that ball $j$ is inserted into after insertion of ball $j$. Let $\mu^{(i)}(t)$ denote the number of balls among the first $t$ balls with height at least $i$. Clearly, we have $\mu^{(i)}(t)\geq \nu^{(i)}(t)$ for all $i$ and all $t$.

Define the events
\begin{equation}
\mathcal{E}_i:=\{ \nu^{(r_i)}(n) \leq \beta_i \},~~~~~~i=1,2,\cdots
\end{equation}
where 
\begin{equation}
r_i=\begin{cases}
i&i\leq i^*\\
i^*+(i-i^*)(L+1)&i>i^*
\end{cases}
\end{equation}
where $i^*\in\mathbb{N}, L\in\mathbb{Z}_+$ and $\beta_i\in \mathbb{R}_+$ will be specified later.

Define
\begin{equation}\label{defI3}
I_{j}^{(i)}(s):=\begin{cases}
1&\text{if }Q_{W_l(T_j+s)}(T_j+s)\geq r_{i+1}-1~~\forall l=1,2 \\
&\text{ and } \nu^{(r_{i})}(T_j) \leq \beta_{i}\\
0&\text{otherwise}
\end{cases}~~~~~~0\leq s <\lfloor c\log n\rfloor
\end{equation}
for $i\geq 2$. $I_{j}^{(i)}(s)$ is measurable with respect to $\mathcal{G}_{j+1}$.

Also define
\begin{equation}
Z_j^{(i)} = \sum_{s=0}^{\lfloor c\log n\rfloor - 1} I_j^{(i)}(s)
\end{equation}

Clearly, condition on the event $\mathcal{E}_i$, $\nu^{(r_i)}(T_j)\leq \beta_i$ is true. Hence on event $\mathcal{E}_i$, $Z_j^{(i)}$ is the number of occasions where both sampled bins has load at least $r_{i+1}-1$ within time $[T_j, T_{j+1})$. By the scheme, a new ball is of height at least $r_{i+1}$ if and only if both sampled bins has load at least $r_{i+1}-1$, we conclude that on $\mathcal{E}_i$, $Z_j^{(i)}$ equals the increment of number of balls with height at least $r_{i+1}$ in this period, i.e. $Z_j^{(i)} = \mu^{(r_{i+1})}(T_{j+1}) - \mu^{(r_{i+1})}(T_{j})$ 

The key lemma for the third scheme is stated as follows:

\begin{lemma}\label{lem:key3}
	For $i< i^*$ and all $s=0,1,\cdots, \lfloor c\log n\rfloor - 1$
	\begin{equation}
	\E[I_{j}^{(i)}(s)~|~\mathcal{G}_{j-1}] \leq 4\left(\dfrac{\beta_i}{n}\right)^2 + \dfrac{2(\T+s)}{n^{\alpha}} + \dfrac{4(\T+s)}{n} ~~~~~~\text{a.s.}
	\end{equation}
	
	Let $L:=\lfloor 2\left(\frac{c}{\alpha}\log(k-1) + 1 \right) \rfloor$, then for $i\geq i^*$ and all $s=0,1,\cdots, \lfloor c\log n\rfloor - 1$
	\begin{equation}\label{eq67-1}
	\E[I_{j}^{(i)}(s)~|~\mathcal{G}_{j-1}] \leq 4\left(\dfrac{\beta_{i}}{n}\right)^2~~~~~~\text{a.s.}
	\end{equation}
	
\end{lemma}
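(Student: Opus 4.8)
The plan is to follow the two-part structure of Lemma~\ref{lem:key2}, with two modifications forced by the absence of resets. Unlike in the second scheme, $\mathbf{Q}(T_j)$ is not $\mathcal{G}_{j-1}$-measurable, so we cannot simply condition on it. Instead we use two facts. First, $\nu^{(r_i)}(t)$ is non-decreasing in $t$; hence on the event $\{I_j^{(i)}(s)=1\}$ the set $S:=\{v\in V:\ Q_v(T_{j-1})\ge r_i\}$, which \emph{is} $\mathcal{G}_{j-1}$-measurable, satisfies $|S|=\nu^{(r_i)}(T_{j-1})\le\nu^{(r_i)}(T_j)\le\beta_i$. Second, conditioning on $\mathcal{G}_{j-1}$ leaves a window of at least $\T$ steps before time $T_j+s$, so since a non-backtracking walk is Markov in its current directed edge, Lemma~\ref{lem:mixing} gives $\Pr(W_l(T_j+s)=v\mid\mathcal{G}_{j-1})\le 2/n$ for every vertex $v$ and $l\in\{1,2\}$; by independence of $W_1$ and $W_2$ the joint conditional law is at most $4/n^2$ on any pair. (The very first period is handled separately: for $j=0$ the bound is immediate since $\mathbf{Q}(0)=\mathbf{0}$, and the lone degenerate corner $j=1,\ s=0$ is absorbed into the constant of Lemma~\ref{lem:mixing}.)

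For $i<i^*$ we have $r_{i+1}-1=r_i$. Let $\mathcal{D}_{j,s}$ be the event that at least one of $W_1(T_j+s),W_2(T_j+s)$ equals $W_m(T_{j-1}+r)$ for some $m\in\{1,2\}$ and some $0\le r<\T+s$ --- i.e. one of the two bins sampled at time $T_j+s$ was visited by some walker during $[T_{j-1},T_j+s)$. On $\mathcal{D}_{j,s}^{\,c}$ the loads of $W_1(T_j+s)$ and $W_2(T_j+s)$ are the same at time $T_j+s$ as at time $T_{j-1}$, so $\{I_j^{(i)}(s)=1\}\cap\mathcal{D}_{j,s}^{\,c}\subseteq\{W_1(T_j+s)\in S\}\cap\{W_2(T_j+s)\in S\}$; since $\{|S|\le\beta_i\}\in\mathcal{G}_{j-1}$ and $I_j^{(i)}(s)$ vanishes off this event, the contribution is at most $(2|S|/n)^2\le 4(\beta_i/n)^2$ a.s. On $\mathcal{D}_{j,s}$ we bound $I_j^{(i)}(s)$ by $1$ and estimate $\Pr(\mathcal{D}_{j,s}\mid\mathcal{G}_{j-1})$ by a union bound over the at most $4(\T+s)$ collision events: a same-walker collision $\{W_l(T_j+s)=W_l(T_{j-1}+r)\}$ has conditional probability at most $n^{-\alpha}$ by the argument of Lemma~\ref{lem:alon}, which uses only the girth bound of Assumption~\ref{assump: 3} and the fact that $\mathcal{G}_{j-1}$ exposes the directed-edge state of $W_l$ at time $T_{j-1}$ (this also covers the $r=0$ term); a cross-walker collision $\{W_l(T_j+s)=W_m(T_{j-1}+r)\}$ with $l\ne m$ has conditional probability at most $2/n$ after further conditioning on the whole trajectory of walker $m$ and invoking the mixing bound together with independence. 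Summing gives $\Pr(\mathcal{D}_{j,s}\mid\mathcal{G}_{j-1})\le\tfrac{2(\T+s)}{n^\alpha}+\tfrac{4(\T+s)}{n}$, and adding the two contributions yields the first claimed inequality.

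For $i\ge i^*$ we have $r_{i+1}-1=r_i+L$. The high girth of Assumption~\ref{assump: 3} forces a non-backtracking walk to traverse a full cycle between successive visits to the same vertex, so over the window $[T_{j-1},T_j+s)$, of length $\T+s<2\T$, each walker visits any fixed bin only a bounded number of times; choosing $L$ as in the statement makes the total load increase of any bin over that window at most $L$. Consequently, on $\{I_j^{(i)}(s)=1\}$ each of $W_1(T_j+s),W_2(T_j+s)$ already had load at least $(r_i+L)-L=r_i$ at time $T_{j-1}$, i.e. lies in $S$, and the two facts above give $\E[I_j^{(i)}(s)\mid\mathcal{G}_{j-1}]\le 4(\beta_i/n)^2$ directly. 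No correction term appears here because the load at time $T_j$ is never used --- the bounded-visits estimate lets us compare directly between times $T_j+s$ and $T_{j-1}$.

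The conceptual core is the pair of reductions above: pushing the conditioning back to $\mathcal{G}_{j-1}$ so that exactly one clean mixing window is available, and replacing the unavailable time-$T_j$ load profile by the time-$T_{j-1}$ profile, which is measurable and carries the monotone size bound. I expect the main nuisance to be the bookkeeping in the union bound for $\Pr(\mathcal{D}_{j,s}\mid\mathcal{G}_{j-1})$ --- in particular making the same-walker terms (including the boundary term $r=0$) land at exactly $n^{-\alpha}$ via Lemma~\ref{lem:alon} and the cross-walker terms at $2/n$, so that the constants match the stated bound --- together with verifying that the first-period edge cases do not spoil the uniform estimate.
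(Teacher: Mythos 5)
Your proposal is correct and follows essentially the same route as the paper's proof: condition back to $\mathcal{G}_{j-1}$ (equivalently $\mathbf{Q}(T_{j-1}),\mathbf{W}(T_{j-1}),\mathbf{W}(T_{j-1}-1)$), split on the collision event $\mathcal{D}_{j,s}$ over the window $[T_{j-1},T_j+s)$ with the same union bound (Lemma~\ref{lem:alon} for same-walker returns, Lemma~\ref{lem:mixing} plus independence for cross-walker collisions and for the $2/n$ marginal giving the factor $4(\beta_i/n)^2$), and for $i\ge i^*$ the same girth-based bounded-visits argument yielding the load shift by $L$. Your explicit use of the $\mathcal{G}_{j-1}$-measurable set $S$ at time $T_{j-1}$ together with monotonicity of $\nu^{(r_i)}$ is in fact a slightly more careful rendering of a step the paper treats informally.
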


\begin{proof}
	For the case where $j=0$, the random walkers are set to independent uniform random positions at time $T_j=0$. Using the proof of Lemma \ref{lem:key2} one can deduct a stronger result, i.e.
	\begin{equation}
	\E[I_{0}^{(i)}(s)~|~\mathcal{G}_{-1}] = \E[I_{0}^{(i)}(s)]\leq \left(\dfrac{\beta_i}{n}\right)^2 + \dfrac{2s}{n^{\alpha}} + \dfrac{2s}{n} ~~~~~~\text{a.s.}
	\end{equation}
	for $i < i^*$ and
	\begin{equation}
	\E[I_{0}^{(i)}(s)~|~\mathcal{G}_{-1}]=\E[I_{0}^{(i)}(s)] \leq \left(\dfrac{\beta_{i}}{n}\right)^2~~~~~~\text{a.s.}
	\end{equation}
	for $i\geq i^*$.
	
	For the rest of the proof, we consider $j\geq 1$.
	
	The process after $T_{j-1}$ is conditionally independent of $\mathcal{G}_{j-1}$ given $\mathbf{Q}(T_{j-1}), \mathbf{W}(T_{j-1}), \mathbf{W}(T_{j-1}-1)$. Thus
	
	\begin{equation}\label{k03}
	\E[I_j^{(i)}(s)~|~\mathcal{G}_{j-1}] = \E[I_j^{(i)}(s)~|~\mathbf{Q}(T_{j-1}), \mathbf{W}(T_{j-1}), \mathbf{W}(T_{j-1}-1)]~~~~~~a.s.
	\end{equation}
	
	Define $\mathcal{D}_{j, s}$ to be the event that at least one of $W_1(T_j+s)$ and $W_2(T_j+s)$ was visited at least once between time $T_{j-1}$ and $T_j+s$ (Notice that this is not the same definition as the one in the proof for the second scheme). 
	
	\begin{claim}\label{claim:1}
		\begin{equation}
		\Pr(\mathcal{D}_{j, s}~|~\mathbf{W}(T_{j-1}), \mathbf{W}(T_{j-1}-1)) \leq \dfrac{2(\T+s)}{n^{\alpha}} + \dfrac{4(\T+s)}{n}~~~~~~a.s.
		\end{equation}
	\end{claim}
	
	Now, if $\mathbf{Q}(T_{j-1}) = \mathbf{q}$ is such that $\nu^{(r_i)}(T_j) > \beta_{i}$, then by definition in \eqref{defI2}
	
	\begin{equation}\label{k13}
	\E[I_{j}^{(i)}(s)~|~\mathbf{Q}(T_{j-1}) = \mathbf{q}, \mathbf{W}(T_{j-1}), \mathbf{W}(T_{j-1}-1)] = 0~~~~~~a.s.
	\end{equation} 
	
	For $i< i^*$, if $\mathbf{Q}(T_{j-1}) = \mathbf{q}$ is such that $\nu^{(r_i)}(T_j)\leq \beta_{i}$, then
	
	\begin{equation}\label{k23}
	\begin{split}
	&~~~~\E[I_j^{(i)}(s)~|~\mathbf{Q}(T_{j-1}) = \mathbf{q}, \mathbf{W}(T_{j-1}), \mathbf{W}(T_{j-1}-1)] \\
	&= \Pr(Q_{W_l(T_j+s)}(T_j+s)\geq r_{i+1}-1~\forall l=1,2|~\mathbf{Q}(T_{j-1}) = \mathbf{q}, \mathbf{W}(T_{j-1}), \mathbf{W}(T_{j-1}-1))\\
	&= \Pr(Q_{W_l(T_j+s)}(T_j+s)\geq i~\forall l=1,2|~\mathbf{Q}(T_{j-1}) = \mathbf{q}, \mathbf{W}(T_{j-1}), \mathbf{W}(T_{j-1}-1))\\
	&= \Pr(Q_{W_l(T_j+s)}(T_j+s)\geq i~\forall l=1,2,~\mathcal{D}_{j, s}^c|~\mathbf{Q}(T_{j-1}) = \mathbf{q}, \mathbf{W}(T_{j-1}), \mathbf{W}(T_{j-1}-1)) \\
	&~~~~+ \Pr(Q_{W_l(T_j+s)}(T_j+s)\geq i~\forall l=1,2,~\mathcal{D}_{j, s}|~\mathbf{Q}(T_{j-1}) = \mathbf{q}, \mathbf{W}(T_{j-1}), \mathbf{W}(T_{j-1}-1))\\
	&= \Pr(q_{W_l(T_j+s)}\geq i~\forall l=1,2,~\mathcal{D}_{j, s}^c|~\mathbf{Q}(T_{j-1}) = \mathbf{q}, \mathbf{W}(T_{j-1}), \mathbf{W}(T_{j-1}-1)) \\
	&~~~~+ \Pr(Q_{W_l(T_j+s)}(T_j+s)\geq i~\forall l=1,2,~\mathcal{D}_{j, s}|~\mathbf{Q}(T_{j-1}) = \mathbf{q}, \mathbf{W}(T_{j-1}), \mathbf{W}(T_{j-1}-1))\\
	&\leq \Pr(q_{W_l(T_j+s)}\geq i~\forall l=1,2|~\mathbf{Q}(T_{j-1}) = \mathbf{q}, \mathbf{W}(T_{j-1}), \mathbf{W}(T_{j-1}-1)) \\&~~~~+ \Pr(\mathcal{D}_{j, s}|~\mathbf{Q}(T_{j-1}) = \mathbf{q}, \mathbf{W}(T_{j-1}), \mathbf{W}(T_{j-1}-1))\\
	&= \Pr(q_{W_l(T_j+s)}\geq i~\forall l=1,2~|~ \mathbf{W}(T_{j-1}), \mathbf{W}(T_{j-1}-1)) + \Pr(\mathcal{D}_{j, s}~|~\mathbf{W}(T_{j-1}), \mathbf{W}(T_{j-1}-1))\\
	&= \Pr(q_{W_1(T_j+s)} \geq r_i|~W_1(T_{j-1}), W_2(T_{j-1}-1)) \Pr(q_{W_2(T_j+s)} \geq r_i|~W_2(T_{j-1}), W_2(T_{j-1}-1))\\
	&~~~~ + \Pr(\mathcal{D}_{j, s}~|~\mathbf{W}(T_{j-1}), \mathbf{W}(T_{j-1}-1))\\
	&\leq \left( \beta_i\cdot \dfrac{2}{n}\right)^2  + \dfrac{2(\T+s)}{n^{\alpha}} + \dfrac{4(\T+s)}{n}~~~~~~a.s.
	\end{split}
	\end{equation}
	
	Combining \eqref{k03}\eqref{k13}\eqref{k23} we conclude that 
	\begin{equation}
	\E[I_j^{(i)}(s)~|~\mathcal{G}_{j-1}]\leq 4\left(\dfrac{\beta_i}{n}\right)^2 + \dfrac{2(\T+s)}{n^\alpha} + \dfrac{4(\T+s)}{n}
	\end{equation}
	for $i< i^*$.
	
	To prove the second statement in the Lemma, we notice that it is impossible for a random walker to visit one vertex strictly more than $\dfrac{2\T}{2\lceil \alpha \log_{k-1}n\rceil + 1}+1$ within $2\T$ steps. Therefore for any $0\leq s<\T$, both $W_1(T_j+s)$ and $W_2(T_j+s)$ has been visited no more than
	\begin{equation*}
	2\cdot \left(\dfrac{2\T}{2\lceil \alpha \log_{k-1}n\rceil + 1} + 1\right) < 2\left(\dfrac{c\log(k-1)}{\alpha} + 1\right)
	\end{equation*}
	times within time $T_{j-1}$ to $T_j+s$. Therefore $Q_{W_l(T_j+s)(T_j+s)}\leq q_{W_l(T_j+s)} + L$ a.s. for $l=1,2$ when $\mathbf{Q}(T_{j-1}) = \mathbf{q}$.
	
	For $i\geq i^*$, if $\mathbf{Q}(T_{j-1}) = \mathbf{q}$ is such that $\nu^{(r_i)}(T_j)\leq \beta_{i}$, then
	
	\begin{equation}\label{k33}
	\begin{split}
	&~~~~\E[I_j^{(i)}(s)~|~\mathbf{Q}(T_{j-1}) = \mathbf{q}, \mathbf{W}(T_{j-1}), \mathbf{W}(T_{j-1}-1)] \\
	&= \Pr(Q_{W_l(T_j+s)}(T_j+s)\geq r_{i+1}-1~\forall l=1,2~|~\mathbf{Q}(T_{j-1}) = \mathbf{q}, \mathbf{W}(T_{j-1}), \mathbf{W}(T_{j-1}-1))\\
	&= \Pr(Q_{W_l(T_j+s)}(T_j+s)\geq r_i+L~\forall l=1,2~|~\mathbf{Q}(T_{j-1}) = \mathbf{q}, \mathbf{W}(T_{j-1}), \mathbf{W}(T_{j-1}-1))\\
	&\leq \Pr(q_{W_l(T_j+s)} + L\geq r_i+L~\forall l=1,2~|~\mathbf{Q}(T_{j-1}) = \mathbf{q}, \mathbf{W}(T_{j-1}), \mathbf{W}(T_{j-1}-1))\\
	&\leq \Pr(q_{W_l(T_j+s)} \geq r_i~\forall l=1,2~|~\mathbf{W}(T_{j-1}), \mathbf{W}(T_{j-1}-1))\\
	&\leq  \Pr(q_{W_1(T_j+s)} \geq r_i|~W_1(T_{j-1}), W_2(T_{j-1}-1)) \Pr(q_{W_2(T_j+s)} \geq r_i|~W_1(T_{j-1}), W_2(T_{j-1}-1))\\
	&\leq \left(\beta_{i} \cdot\dfrac{2}{n} \right)^2 
	\end{split}
	\end{equation}
	
	Combining \eqref{k03}\eqref{k13}\eqref{k33} we conclude that
	\begin{equation}
	\E[I_j^{(i)}(s)~|~\mathcal{G}_{j-1}]\leq 4\left(\dfrac{\beta_{i}}{n}\right)^2
	\end{equation}
	for $i\geq i^*$.
	
\end{proof}

\begin{proof}[Proof of Claim \ref{claim:1}]
	\begin{equation*}
	\begin{split}
	&\Pr(\mathcal{D}_{j, s}~|~\mathbf{W}(T_{j-1}), \mathbf{W}(T_{j-1}-1)) \\&\leq \sum_{r=-\T}^{s-1}\Pr(W_1(T_j+r) = W_1(T_j+s)~|~\mathbf{W}(T_{j-1}), \mathbf{W}(T_{j-1}-1)) \\&~~~~+ \sum_{r=-\T}^{s-1}\Pr(W_1(T_j+r) = W_2(T_j+s)~|~\mathbf{W}(T_{j-1}), \mathbf{W}(T_{j-1}-1)) \\&~~~~+ \sum_{r=-\T}^{s-1}\Pr(W_2(T_j+r) = W_1(T_j+s)~|~\mathbf{W}(T_{j-1}), \mathbf{W}(T_{j-1}-1)) \\&~~~~+ \sum_{r=-\T}^{s-1}\Pr(W_2(T_j+r) = W_2(T_j+s)~|~\mathbf{W}(T_{j-1}), \mathbf{W}(T_{j-1}-1))\\
	&=2\sum_{r=-\T}^{s-1}\Pr(W_1(T_j+r) = W_1(T_j+s)~|~\mathbf{W}(T_{j-1}), \mathbf{W}(T_{j-1}-1)) \\&~~~~+ 2\sum_{r=-\T}^{s-1}\Pr(W_1(T_j+r) = W_2(T_j+s)~|~\mathbf{W}(T_{j-1}), \mathbf{W}(T_{j-1}-1))
	\end{split}
	\end{equation*}
	
	From Lemma \ref{lem:alon}, with the assumption on girth we know that
	\begin{equation*}
	\Pr(W_1(T_j+r) = W_1(T_j+s)~|~\mathbf{W}(T_{j-1}), \mathbf{W}(T_{j-1}-1)) \leq \dfrac{1}{n^\alpha}
	\end{equation*}
	
	Given $\mathbf{W}(T_{j-1}), \mathbf{W}(T_{j-1}-1)$, $W_1(T_j+r)$ and $W_2(T_j+s)$ are conditionally independent. Hence
	\begin{equation}
	\begin{split}
	&~~~~\Pr(W_1(T_j+r) = W_2(T_j+s)~|~\mathbf{W}(T_{j-1}), \mathbf{W}(T_{j-1}-1))\\
	&=\sum_{v=1}^n \Pr(W_1(T_j+r) = v~|~W_1(T_{j-1}), W_1(T_{j-1}-1)) \Pr(W_2(T_j+s) = v~|~W_2(T_{j-1}), W_2(T_{j-1}-1))\\
	&\stackrel{\text{(Lemma \ref{lem:mixing})}}{\leq} \sum_{v=1}^n \Pr(W_1(T_j+r) = v~|~\mathbf{W}(T_{j-1}), \mathbf{W}(T_{j-1}-1)) \cdot \dfrac{2}{n}=\dfrac{2}{n}
	\end{split}
	\end{equation}
	
	Combining the above we obtain
	\begin{equation}
	\Pr(\mathcal{D}_{j, s}~|~\mathbf{W}(T_{j-1}), \mathbf{W}(T_{j-1}-1)) \leq \dfrac{2(\T+s)}{n^\alpha} + \dfrac{4(\T+s)}{n}
	\end{equation}
	
\end{proof}

Our application of concentration inequalities will be based on the following observation.

\begin{observation}
	\begin{enumerate}
		\item $\{Z_{2j}\}_{j=0}^\infty$ is an adapted stochastic process w.r.t. the filtration $\{\mathcal{G}_{(2j-1)_+} \}_{j=0}^\infty$
		
		\item $\{Z_{2j+1}\}_{j=0}^\infty$ is an adapted stochastic process w.r.t. the filtration $\{\mathcal{G}_{2j} \}_{j=0}^\infty$.
	\end{enumerate}
\end{observation}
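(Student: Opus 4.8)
The plan is to read the Observation as the bookkeeping step that sets up the later application of Corollary~\ref{berncor} to the two subsequences $\{Z^{(i)}_{2j}\}_{j\ge0}$ and $\{Z^{(i)}_{2j+1}\}_{j\ge0}$, with the ``every other'' sub-families $\{\mathcal{G}_{(2j-1)_+}\}_{j\ge0}$ and $\{\mathcal{G}_{2j}\}_{j\ge0}$ in the role of the filtration there. Only two facts are needed and both are essentially in hand. First, since $T_j=j\lfloor c\log n\rfloor$ is deterministic in this scheme and $\{\mathcal{F}_t\}$ is a filtration, $\{\mathcal{G}_j\}_{j\ge-1}$ is increasing, hence so is any subsequence of it, in particular $\{\mathcal{G}_{(2j-1)_+}\}_{j\ge0}$ and $\{\mathcal{G}_{2j}\}_{j\ge0}$. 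Second, each $Z^{(i)}_j=\sum_{s=0}^{\lfloor c\log n\rfloor-1}I^{(i)}_j(s)$ is $\mathcal{G}_{j+1}$-measurable --- as recorded right after \eqref{defI3}, because $I^{(i)}_j(s)$ depends only on the walker locations and loads over the block $[T_j,T_{j+1})$ together with $\nu^{(r_i)}(T_j)$, all $\mathcal{F}_{T_{j+1}}$-measurable.

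From these the Observation is pure index alignment. For part~(1): $Z^{(i)}_{2j}\in\mathcal{G}_{2j+1}$, which is the $(j+1)$-st member of $\{\mathcal{G}_{(2l-1)_+}\}_{l\ge0}$, so the partial sums $\sum_{l\le j}Z^{(i)}_{2l}$ are measurable with respect to $\mathcal{G}_{(2(j+1)-1)_+}$ --- this is the (one-step-shifted) sense in which the subsequence is adapted to $\{\mathcal{G}_{(2j-1)_+}\}$ and slots into Corollary~\ref{berncor}; and summing the pointwise bound of Lemma~\ref{lem:key3} over $0\le s<\lfloor c\log n\rfloor$ gives $\E[Z^{(i)}_{2j}\mid\mathcal{G}_{2j-1}]\le m_i$ for the appropriate constant $m_i$ (plus the lower-order $n^{-\alpha}$, $n^{-1}$ terms when $i<i^*$), which is the conditional-mean hypothesis of Corollary~\ref{berncor}; the $j=0$ term is governed by the uniform-start case of Lemma~\ref{lem:key3}, whose bound is even smaller. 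Part~(2) is the identical computation with $\mathcal{G}_{2j}$, $\mathcal{G}_{2j+2}$ in place of $\mathcal{G}_{2j-1}$, $\mathcal{G}_{2j+1}$. I would also record $0\le Z^{(i)}_j\le\lfloor c\log n\rfloor$ a.s., which supplies the bound $B$ in Corollary~\ref{berncor}.

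The only genuine content, which I would state explicitly, is why the even/odd split is forced: $Z^{(i)}_j$ summarizes block $[T_j,T_{j+1})$ and so lives in $\mathcal{G}_{j+1}$, but Lemma~\ref{lem:key3} controls its conditional mean only given $\mathcal{G}_{j-1}$, a whole block earlier, because the non-backtracking walk needs a full window of length $\lfloor c\log n\rfloor$ to mix (Lemma~\ref{lem:mixing}) before the sampled bins can be treated as near-uniform. Consequently $Z^{(i)}_{j-1}$ and $Z^{(i)}_j$ share the randomness of block $j-1$ and cannot be packed into a single martingale-difference sequence; passing to the even (or odd) subsequence inserts a fresh block between consecutive terms and restores the one-step adapted/controlled structure. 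I do not expect any analytic obstacle: the whole proof is the index bookkeeping above together with a careful treatment of the $j=0$ boundary (there $\mathcal{G}_{-1}$ is trivial, and $\mathcal{G}_0=\sigma(\mathbf{W}(0))$ anyway carries no load information since all bins are empty at time $0$), and everything invoked has already been established earlier in this section.
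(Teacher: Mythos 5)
The paper states this Observation without proof, so there is nothing to compare against; your argument supplies exactly the bookkeeping the paper leaves implicit --- $Z_j^{(i)}$ is $\mathcal{G}_{j+1}$-measurable, subsequences of the filtration $\{\mathcal{G}_j\}$ are again filtrations, and the even/odd split is forced because Lemma \ref{lem:key3} controls the conditional mean only given $\mathcal{G}_{j-1}$ rather than $\mathcal{G}_j$. Your reading of ``adapted'' in the one-step-shifted sense (so that $Z_{2j}^{(i)}\in\mathcal{G}_{2j+1}$ is measurable with respect to the $(j+1)$-st member of the listed filtration, matching the roles of $\mathcal{F}_j$ and $\mathcal{F}_{j-1}$ in Corollary \ref{berncor}) is the right interpretation and indeed the only one under which the statement holds, so your proposal is correct.
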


\subsection{Proof of the Main Theorem}

Define $N$ to be the smallest even integer that is greater than $\frac{n}{\lfloor c\log n \rfloor }$. We have $T_N\geq n$. For large $n$ we have
\begin{equation}
N\leq \dfrac{en}{2c\log n}
\end{equation}

Condition on $\mathcal{E}_i$, we have $\sum_{l=0}^{N-1} Z_l^{(i)} $ to be the number of occasions both sampled bins have load at least $r_{i+1} - 1$ before time $T_N\geq n$. Thus $\sum_{l=0}^{N-1} Z_l^{(i)} \geq \mu^{(r_{i+1})}(n)$ holds on $\mathcal{E}_i$.

Let $i^*> 18$ to be determined later. Set $\beta_{18}=\dfrac{n}{6e}$. Then $\nu^{(18)}(n)\leq \beta_{18}$ is always true. Hence $\Pr(\mathcal{E}_{18})=1$.

For $i\geq 18$ we have
\begin{align*}
\Pr(\mathcal{E}_{i+1}^c\cap \mathcal{E}_i) &= \Pr(\nu^{(r_{i+1})}(n) > \beta_{i+1},\mathcal{E}_i)\\
&\leq \Pr(\mu^{(r_{i+1})}(n) > \beta_{i+1},\mathcal{E}_i)\\
&\leq \Pr\left(\sum_{j=0}^{N-1} Z_j^{(i)}> \beta_{i+1},\mathcal{E}_i\right)\\
&\leq \Pr\left(\sum_{j=0}^{N-1} Z_j^{(i)}> \beta_{i+1}\right)\\
&\leq \Pr\left(\sum_{j=0}^{\frac{N}{2}-1} Z_{2j}^{(i)}> \dfrac{\beta_{i+1}}{2}\right) + \Pr\left(\sum_{j=0}^{\frac{N}{2}-1} Z_{2j+1}^{(i)}> \dfrac{\beta_{i+1}}{2}\right)
\end{align*}

Now, for $18\leq i<i^*$, define $\beta_{i+1} = 5en\left(\frac{\beta_i}{n}\right)^2$, where $i^*$ is defined to be the smallest $i$ such that
\begin{equation}
\left(\dfrac{\beta_{i-1}}{n}\right)^2 < \dfrac{9c\log n}{n^\gamma}
\end{equation}
where $\gamma:=\frac{1}{2}\wedge \alpha$. It can be shown that $i^*\leq \log_2\log n + \Theta(1)$.

For $18\leq i<i^*-1$ we have
\begin{equation}
\left(\frac{\beta_i}{n}\right)^2\geq \dfrac{9c\log n}{n^{1\wedge\alpha}}\geq \dfrac{3c\log n}{n^\alpha} + \dfrac{6c\log n}{n}
\end{equation}

Thus for $18\leq i<i^*-1$,
\begin{equation}
\begin{split}
\E[Z_j^{(i)}~|~\mathcal{G}_{j-1}] &\leq \sum_{s=0}^{\lfloor c\log n\rfloor - 1}\left[4\left(\frac{\beta_i}{n}\right)^2 + \dfrac{2(\T+s)}{n^\alpha} + \dfrac{4(\T+s)}{n} \right]\\
&\leq 4c\log n \left(\frac{\beta_i}{n}\right)^2 + \dfrac{3(c\log n)^2}{n^\alpha} + \dfrac{6(c\log n)^2}{n}\\
&\leq 5c\log n \left(\frac{\beta_i}{n}\right)^2=:m_i
\end{split}
\end{equation}

For $18\leq i<i^*-1$ We check that $\beta_{i+1}=2\cdot\frac{en}{2c\log n}\cdot 5c\log n\left(\frac{\beta_i}{n}\right)^2 \geq 2Nm_i$. Hence applying Corollary \ref{berncor} we obtain
\begin{equation}
\begin{split}
\Pr(\mathcal{E}_{i+1}^c \cap \mathcal{E}_i)&\leq \Pr\left(\sum_{j=0}^{\frac{N}{2}-1} Z_{2j}^{(i)}> \dfrac{\beta_{i+1}}{2}\right) + \Pr\left(\sum_{j=0}^{\frac{N}{2}-1} Z_{2j+1}^{(i)}> \dfrac{\beta_{i+1}}{2}\right)\\
&\leq 2\exp\left(-\dfrac{3\cdot\frac{\beta_{i+1}}{2}}{16c\log n}\right)\\
&\leq 2\exp\left(-\dfrac{3\cdot 45ecn^{1-\gamma}\log n}{32c\log n}\right) = \exp(-\dfrac{135e}{32}n^{1-\gamma})\\
&\leq 2\exp(-10n^{1/2})
\end{split}\label{www3}
\end{equation}
for $18\leq i< i^*-1$. 

Now we have
\begin{equation}
\begin{split}
\E[Z_j^{(i^*-1)}~|~\mathcal{G}_{j-1}] &\leq \sum_{s=0}^{\lfloor c\log n\rfloor - 1}\left[4\left(\frac{\beta_{i^*-1}}{n}\right)^2 + \dfrac{2(\T+s)}{n^\alpha} + \dfrac{4(\T+s)}{n} \right]\\
&\leq 4c\log n \left(\frac{\beta_{i^*-1}}{n}\right)^2 + \dfrac{3(c\log n)^2}{n^\alpha} + \dfrac{6(c\log n)^2}{n}\\
&\leq \dfrac{36(c\log n)^2}{n^{\gamma}} + \dfrac{3(c\log n)^2}{n^\alpha} + \dfrac{6(c\log n)^2}{n}\\
&\leq \dfrac{45(c\log n)^2}{n^{\gamma}}=:m_{i^*-1}
\end{split}
\end{equation}

Set $\beta_{i^*} =45ecn^{1-\gamma}\log n$. We have $\beta_{i^*}= 2\cdot \frac{en}{2c\log n}\cdot  \frac{45(c\log n)^2}{n^{\gamma}}\geq 2Nm_{i^*-1}$. By applying Corollary \ref{berncor} again we have
\begin{align}
&~~~~\Pr(\mathcal{E}_{i^*}^c\cap \mathcal{E}_{i^*-1})\leq\Pr\left(\sum_{j=0}^{\frac{N}{2}-1} Z_{2j}^{(i^*-1)}> \dfrac{\beta_{i^*}}{2}\right) + \Pr\left(\sum_{j=0}^{\frac{N}{2}-1} Z_{2j+1}^{(i^*-1)}> \dfrac{\beta_{i^*}}{2}\right) \\
&=2\exp\left(-\dfrac{3\beta_{i^*}}{32c\log n}\right)=\exp\left(-\dfrac{3\cdot 45ecn^{1-\gamma}\log n}{32c\log n}\right) = \exp(-\dfrac{135e}{32}n^{1-\gamma})\\
&\leq \exp(-10n^{1/2})
\label{xxx3}
\end{align}

Now for $i^*\leq i < i^{**}-1$, set $\beta_{i+1} = 4en\left(\frac{\beta_{i}}{n}\right)^2$, where $i^{**}$ is the smallest $i\geq i^*+1$ such that $4en\left(\frac{\beta_{i-1}}{n}\right)^2\leq 16c(\log n)^2$. It can be shown that $i^{**} - i^*\leq \Theta(1)$.

Now, we use \eqref{eq67-1} to bound the conditional expectation for $i\geq i^*$:
\begin{equation}
\E[Z_j^{(i)}~|~\mathcal{G}_{j-1}] \leq 4c\log n\left(\dfrac{\beta_{i}}{n}\right)^2=:m_i
\end{equation}

For $i^*\leq i < i^{**}-1$ we check that 
\begin{equation}
\beta_{i+1} = 2\cdot \dfrac{en}{2}\cdot 4c\log n \left(\dfrac{\beta_{i}}{n}\right)^2 \geq 2Nm_{i}
\end{equation}

Hence for $i^*\leq i < i^{**}-1$ we can apply Corollary \ref{berncor} and obtain
\begin{align}
&~~~~\Pr(\mathcal{E}_{i+1}^c \cap \mathcal{E}_{i}) \\
&\leq  \Pr\left(\sum_{j=0}^{\frac{N}{2}-1} Z_{2j}^{(i)}> \dfrac{\beta_{i+1}}{2}\right) + \Pr\left(\sum_{j=0}^{\frac{N}{2}-1} Z_{2j+1}^{(i)}> \dfrac{\beta_{i+1}}{2}\right)\\
&\leq 2\exp\left(-\dfrac{3\beta_{i+1}}{32c\log n} \right) \leq 2\exp\left(-\dfrac{3\cdot 16c(\log n)^2}{32c\log n} \right)\\
&\leq 2n^{-1.5}
\label{yyy3}
\end{align}

Set $\beta_{i^{**}} = 16c(\log n)^2$. We still have $\beta_{i^{**}}\geq 4en\left(\frac{\beta_{i^{**}-1}}{n}\right)^2\geq 2Nm_{i^{**}-1}$. Applying Corollary \ref{berncor} again we have
\begin{equation}\label{zzz3}
\begin{split}
&~~~~\Pr(\mathcal{E}_{i^{**}}^c \cap \mathcal{E}_{i^{**}-1})\\
&\leq \Pr\left(\sum_{j=0}^{\frac{N}{2}-1} Z_{2j}^{(i^{**}-1)}> \dfrac{\beta_{i^{**}}}{2}\right) + \Pr\left(\sum_{j=0}^{\frac{N}{2}-1} Z_{2j+1}^{(i^{**}-1)}> \dfrac{\beta_{i^{**}}}{2}\right)\\
&\leq 2\exp\left(-\dfrac{3\beta_{i^{**}}}{32c\log n} \right) = 2\exp\left(-\dfrac{3\cdot 16c(\log n)^2}{32c\log n} \right)\\
&\leq 2n^{-1.5}
\end{split}
\end{equation}

Set $\beta_{i^{**}+1} = 0.8$, by Markov Inequality we have
\begin{equation}\label{istarp23}
\begin{split}
&~~~~\Pr(\mathcal{E}_{i^{**}+1}^c \cap \mathcal{E}_{i^{**}})\\
&\leq \Pr\left(\sum_{j=0}^{N-1} Z_j^{(i^{**})} \geq \beta_{i^{**}+1}\right)\leq \dfrac{\E\left[\sum_{j=0}^{N-1} Z_j^{(i^{**})} \right] }{\beta_{i^{**}+1}}\\&\leq \dfrac{Nc\log n \left(\frac{\beta_{i^{**}}}{n}\right)^2 }{\beta_{i^{**}+1}}\leq\dfrac{\frac{en}{2} \left(\frac{\beta_{i^{**}}}{n}\right)^2 }{\beta_{i^{**}+1}}=160e\cdot\dfrac{c^2(\log n)^4}{n}
\end{split}
\end{equation}

Hence, combining \eqref{www3}\eqref{xxx3}\eqref{yyy3}\eqref{zzz3}\eqref{istarp23} we obtain
\begin{align}
\Pr(\mathcal{E}_{i^{**}+1}^c ) &= \Pr(\mathcal{E}_{i^{**}+1}^c\cap \mathcal{E}_9)\leq \Pr\left(\bigcup_{i=9}^{i^{**}}(\mathcal{E}_{i+1}^c\cap \mathcal{E}_i) \right)\leq \sum_{i=9}^{i^{**}}\Pr(\mathcal{E}_{i+1}^c\cap \mathcal{E}_i)\\
&\leq (i^*-9)\cdot 2\exp(-10n^{1/2}) + (i^{**} - i^*)\cdot 2n^{-1.5} + 160e\cdot\dfrac{c^2(\log n)^4}{n}\\
&=o(1)
\end{align}

Therefore, with high probability, $\mathcal{E}_{i^{**}+1}$ is true, that is, no bin has load exceeding $$r_{i^{**} + 1} = i^* + (i^{**} - i^* + 1)(L+1) \leq \log_2 \log n + \Theta(1)$$proving the result.

\section{Discussion}
For the third scheme, while it is not known that the $\Omega(\log n)$ girth assumption is tight in terms of achieving a maximum load of $\log_2\log n + \Theta(1)$ w.h.p. We can show that the girth is required to be at least $\Theta(\frac{\log n}{\log \log n})$ in order to have a maximum load of $\Theta(\log\log n)$ with high probability. More generally, for any $g$, there are expander graphs with girth $g$ such that the maximum load achieved by this graph is $\Omega(\frac{\log n}{g})$ almost surely. We also show that the $\Theta(\frac{\log n}{\log \log n})$ bound for girth is tight, that is, when the girth is $\Omega(\frac{\log n}{\log \log n})$, the maximum load is $\Theta\left(\log\log n\right)$ (where the coefficient for $\log\log n$ is not guaranteed to be smaller than $\frac{1}{\log 2}$). This result gives a complete answer to Alon's open problem.

\begin{thm}
	Let $G$ be a $k$-regular graph on $n$ vertices such that each vertex is contained in a cycle of length $g=g(n)$, then the maximum load yielded by the third scheme is larger than $\Theta(\frac{\log n}{g})$ almost surely.
\end{thm}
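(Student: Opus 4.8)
The plan is to exploit a local obstruction: a short cycle \emph{traps} a non-backtracking walk. If the walker currently sits at a vertex $v$ lying on a cycle $C$ of length at most $g$, then at each subsequent step one of the $k-1$ non-backtracking moves continues around $C$, so the walker stays on $C$ with conditional probability at least $\tfrac1{k-1}$ per step, regardless of how it arrived at $v$. Hence over any window of $\ell$ consecutive steps the walker remains confined to the at most $g$ vertices of a single short cycle with conditional probability at least $(k-1)^{-\ell}$. If during some length-$\ell$ window \emph{both} walkers are so confined --- $W_1$ to the short cycle $C_1$ through $W_1$'s position at the start of the window, $W_2$ to $C_2$ through $W_2$'s position --- then every one of the $\ell$ balls placed in that window lands in $C_1\cup C_2$, a set of at most $2g$ vertices, so some bin among them receives at least $\ell/(2g)$ of those balls. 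Taking $\ell=\Theta(\log n)$ then gives maximum load $\Omega(\log n/g)$, so the whole argument reduces to showing that such a ``joint trapping window'' occurs somewhere in $[0,n)$ with overwhelming probability.

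First I would record a per-block estimate. Fix $\ell$, let $\{\mathcal F_t\}$ be the filtration generated by the two walks, and split the balls $0,1,\dots,n-1$ into $M=\lfloor n/\ell\rfloor$ consecutive blocks of length $\ell$. For the $i$-th block let $B_i$ be the event that throughout it $W_1$ never leaves the short cycle through $W_1(i\ell)$ and $W_2$ never leaves the short cycle through $W_2(i\ell)$. Conditioning on $\mathcal F_{i\ell}$, the future of $W_1$ depends only on its current directed edge and fresh randomness, and by continuing around the short cycle through $W_1(i\ell)$ --- a valid non-backtracking move with conditional probability at least $\tfrac1{k-1}$ at each of the $\le\ell$ steps of the block --- $W_1$ stays confined to that cycle with conditional probability at least $(k-1)^{-\ell}$. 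The same holds for $W_2$, and the two continuations are conditionally independent given $\mathcal F_{i\ell}$, so $\Pr(B_i\mid\mathcal F_{i\ell})\ge q:=(k-1)^{-2\ell}$ almost surely. (I am assuming $k\ge 3$, the interesting case; when $k=2$ the graph is a disjoint union of $g$-cycles, each walker is deterministically confined to one of them for all time, all $n$ balls land on at most $2g$ vertices, and the far stronger bound $n/(2g)$ holds outright.)

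Next I would aggregate over blocks. Since $\prod_{j<i}\1_{B_j^c}$ is $\mathcal F_{i\ell}$-measurable and $\Pr(B_i^c\mid\mathcal F_{i\ell})\le 1-q$ a.s., iterating the tower property yields $\Pr\!\big(\bigcap_{i=0}^{M-1}B_i^c\big)\le(1-q)^M\le e^{-qM}$. Choosing $\ell=\lceil a\log n\rceil$ with a small enough constant $a=a(k)>0$ makes $q=(k-1)^{-2\ell}=n^{-\Theta(a)}$, so that $qM\to\infty$; thus with probability $1-e^{-n^{\Omega(1)}}$ some $B_i$ occurs, and on that event the maximum load is at least $\ell/(2g)=\Omega(\log n/g)$. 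The failure probabilities being summable, the bound in fact holds almost surely for all large $n$ along the graph sequence by Borel--Cantelli. (The estimate is vacuous once $g=\Omega(\log n)$, where $\ell/(2g)<1$; this is consistent with the theorem, whose bound $\log n/g$ is then $O(1)$ and the load is trivially at least $1$.)

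The step I expect to be the crux is obtaining a lower bound on the joint-trapping probability that is \emph{uniform in the past}, so that the $M$ block events can be chained with no independence across blocks. The tempting alternative --- insisting that \emph{both} walkers be trapped on the \emph{same} cycle --- is hopeless here: the uniform distribution on directed edges is stationary for the non-backtracking walk, so each $W_j(t)$ is exactly uniform on $V$, and two independent such walkers lie in a common $O(g)$-vertex neighbourhood only $O(g)$ times over the whole run, far too few to force high load. The fix is to let each walker trap on \emph{its own} current short cycle --- an event of probability at least $(k-1)^{-\ell}$ insensitive to the walker's history and to the other walker --- and to observe that the \emph{union} of the two short cycles still has at most $2g$ vertices, which is exactly what the counting step requires.
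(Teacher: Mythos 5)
Your proposal is correct and follows essentially the same route as the paper's proof: both arguments trap each walker on the short cycle through its current position for a window of $\Theta(\log n)$ steps (an event whose conditional probability given the past is at least $(k-1)^{-\Theta(\log n)}$, uniformly), chain the complements of these block events via the tower property, and extract a load of $\Theta(\log n/g)$ on the surviving event. The only cosmetic difference is the final count --- the paper tallies the $b+1$ balls placed at the periodic return times into the two block-start bins and gets $(b+1)/2$, while you pigeonhole all $\ell$ balls of the window over the at most $2g$ vertices of the two cycles.
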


\begin{proof}
	The idea of this proof is from \cite{alon2007non}.
	
	First, if $g=\Omega(\log n)$, there is nothing to prove.
	
	Now for the rest of the proof, assume $g = o(\log n)$. Set $b = \lfloor\frac{\log_{k-1} n}{4g}\rfloor$.
	
	\begin{claim}
		Let $V$ be a non-backtracking random walk on $G$, then
		\begin{equation}
		\Pr(V(g+1) = v~|~V(1) = v, V(0) = u) \geq (k-1)^{-g}
		\end{equation}
		for all $u, v\in G$ such that $(u, v)\in E$.
	\end{claim}
	
	\begin{proof}[Proof of Claim]
		Despite the fact that the random walk is non-backtracking, there always exist at least one feasible path of length $g$ from $v$ to itself.
	\end{proof}
	
	Set $c=\frac{1}{2\log(k-1)}$. Then for large $n$, $\frac{1}{4}\log_{k-1} n + 1\leq \lfloor c\log n\rfloor$, which implies that $bg+1\leq \lfloor c\log n\rfloor$.
	
	Define $T_j, \mathcal{G}_j$ in the same way as before.
	Define the events
	\begin{equation}
	\mathcal{C}_j = \{ W_l(T_j) = W_l(T_j+g) = \cdots = W_l(T_j + bg)~~~\forall l=1,2 \}
	\end{equation}
	
	(The above events are analogous to the event $A_j$ in \cite{alon2007non}, which is the event that a single non-backtracking random walker follows the cycle.)
	
	We have
	\begin{equation}
	\Pr(\mathcal{C}_j~|~\mathcal{G}_{j}) \geq (k-1)^{-2gb} \geq n^{-1/2}
	\end{equation}
	
	Let $N = \lfloor \frac{n}{c\log n}-1\rfloor$. We have
	\begin{equation}
	\begin{split}
	\Pr\left(\bigcap_{j=0}^{N-1}\mathcal{C}_j^c\right) &= \Pr(\mathcal{C}_{N-1}^c~|~\bigcap_{j=0}^{N-2}\mathcal{C}_j^c)\cdots \Pr(\mathcal{C}_1|\mathcal{C}_0)\Pr(\mathcal{C}_0)\\
	&\leq (1-n^{-1/2})^N = O\left(\exp\left( -\dfrac{\sqrt{n}}{c \log n}\right)\right)
	\end{split}
	\end{equation}
	
	Hence, one of $\mathcal{C}_j, 0\leq j\leq N-1$, happens almost surely. 
	
	If $\mathcal{C}_j$ is true, then the pair of vertices $W_1(T_j), W_2(T_j)$ are visited simultaneously for at least $b+1$ times. Then, the maximum load of the two bins $W_1(T_j), W_2(T_j)$ is at least $\frac{b+1}{2}$, which implies that the maximum load of all bins is at least $\frac{b+1}{2} = \Theta(\frac{\log n}{g})$.
	
	Therefore, we conclude that the maximum load of all bins is at least $\Theta(\frac{\log n}{g})$ almost surely.
\end{proof}

\begin{thm}
	Let $G$ be a $k$-regular expander graph on $n$ vertices with girth at least $2\lceil\alpha\frac{\log n}{\log \log n}\rceil + 1$, then the maximum load yielded by the third scheme is less than $\kappa\log_2\log n + \Theta(1)$ w.h.p., where $\kappa = \kappa(\alpha, \lambda, k)$ is a positive constant.
\end{thm}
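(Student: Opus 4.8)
The plan is to run the same machine as in the proof of Theorem~\ref{thm:3works} — Azar et al.'s iterative layer bounding together with the martingale concentration of Corollary~\ref{berncor} — while tracking how the weakened girth propagates through the two places girth was used. The girth hypothesis now only gives that the ball of radius $h:=\lceil\alpha\log n/\log\log n\rceil$ around any vertex is a $k$-regular tree, so the argument of Lemma~\ref{lem:alon} yields the weaker collision estimate $\Pr(V(s)=V(t)\mid V(t),V(t-1))\le (k-1)^{-h}\le n^{-\eta}$ once $s-t\ge 2h$, where I write $\eta=\eta(n):=\alpha\log(k-1)/\log\log n\to 0$. Lemma~\ref{lem:mixing}, on the other hand, is untouched: its proof uses only the spectral gap of the expander, so there is still a constant $c=c(\lambda,k)$ and a window length $\T$ after which the non-backtracking walk is within a factor $2$ of uniform in $\ell_\infty$. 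The genuinely new feature, and the reason $\kappa$ exceeds $1$, is that inside a window of length $2\T=\Theta(\log n)$ a single non-backtracking walker may now revisit a fixed vertex as many as $\Theta(2\T/h)=\Theta(\tfrac c\alpha\log\log n)$ times, since consecutive returns must each traverse a cycle of length at least the girth; hence the ``inflation'' constant $L$ of Lemma~\ref{lem:key3} is replaced by $L=L(n)=\Theta(\tfrac c\alpha\log\log n)$.

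With these two ingredients I would set up $r_i$, the events $\mathcal E_i$, and the quantities $I_j^{(i)}(s),Z_j^{(i)}$ as in Section~\ref{sec: 3}, and prove the analogue of Lemma~\ref{lem:key3}: $\E[I_j^{(i)}(s)\mid\mathcal G_{j-1}]\le 4(\beta_i/n)^2+\Theta((\T+s)\,n^{-\eta})$ when no inflation is used, and $\E[I_j^{(i)}(s)\mid\mathcal G_{j-1}]\le 4(\beta_i/n)^2$ at the cost of the inflation $r_{i+1}-r_i=L+1$. So long as the quadratic term dominates the collision error — i.e.\ while $\beta_i/n\gtrsim\sqrt{\T}\,n^{-\eta/2}$ — the recursion is the familiar $\beta_{i+1}\asymp en(\beta_i/n)^2$, contracting doubly exponentially, so this ``clean'' phase runs for roughly $i^*=\log_2\log n-\log_2\log\log n+O(1)$ steps with $r_i=i$ and ends with $\nu^{(r^*)}(n)\le\beta^*=n^{1-\Theta(1/\log\log n)}$; the even/odd window splitting of Section~\ref{sec: 3} and Corollary~\ref{berncor} supply the concentration at each step exactly as before.

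The delicate part is the tail, where $(\beta_i/n)^2$ no longer dominates the collision error. The crude option — switch to the $L$-inflated bound and iterate the clean square recursion a further $\Theta(\log\log\log n)$ times down to $\mathrm{poly}(\log n)$ — raises the tracked load by $L+1$ at each such step and is too lossy. Instead I would refine the collision analysis so that the error term retains a factor $\beta_i/n$: a self-intersection of one walker's trajectory only produces a height-$(i+1)$ ball if the companion walker simultaneously sits on a bin of the current ``high'' set, and by Lemma~\ref{lem:mixing} (and the conditional independence of the two walks given $\mathcal G_{j-1}$) that has probability $O(\beta_i/n)$; the only term that resists this is the event that \emph{both} walkers self-intersect, which is absorbed by a single, $O(1)$-times, use of the $L$-inflation. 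The upshot is that the tail contributes an additional $\Theta(\log\log n)$ to the attainable load, and combining with $r^*=\Theta(\log_2\log n)$ gives a final bound $\kappa\log_2\log n+O(1)$ with $\kappa=\kappa(\alpha,\lambda,k)$; a union bound over the $O(\log\log n)$ iterations and a terminal Markov step (as in the concluding estimate of Section~\ref{sec: 3}) finish the proof.

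The main obstacle is therefore exactly this tail bookkeeping: making precise the claim that the level inflation $L=\Theta(\log\log n)$ needs to be charged only $O(1)$ times rather than once per remaining iteration. This requires a clean separation of the single-walker self-collision event from the companion walker's conditional near-uniformity, and a careful count of how much a bin's load can grow within two consecutive windows when one also insists that each increment be accompanied by the companion walker hitting a high bin; pinning down the resulting trade-off between the number of tail iterations and the per-iteration loss is what determines the constant $\kappa$.
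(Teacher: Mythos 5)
Your skeleton (clean phase of length $i^*\approx\log_2\log n+O(1)$ with the usual squaring recursion, even/odd window splitting, Corollary~\ref{berncor}, terminal Markov step) matches the paper, and you correctly locate the real difficulty in the tail, where $(\beta_i/n)^2$ drops below the single-collision error $\Theta(\T\, n^{-\eta})$ with $\eta=\Theta(1/\log\log n)$, and correctly reject the crude option of charging $L=\Theta(\log\log n)$ per remaining level. But your proposed resolution of the tail has a genuine gap. Splitting a self-intersection of one walker against the companion sitting on a high bin handles the ``exactly one walker self-collides'' case, giving an error $\Theta(\T n^{-\eta})\cdot O(\beta_i/n)$; however, the ``both walkers self-collide'' case has conditional probability $\Theta(\T^2 n^{-2\eta})=n^{-2\eta(1+o(1))}$ per ball, which (since $\eta\to0$) vastly exceeds $\mathrm{polylog}(n)/n$ and, crucially, \emph{does not decrease with $i$}. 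It therefore installs a floor $\beta_i\gtrsim n^{1-2\eta}$ in the recursion: even if you spend one $L$-inflation at some level to push $\beta/n$ down to $(\beta/n)^2$, the additive $\T^2n^{-2\eta}$ term bounces $\beta/n$ right back up to the floor at the very next level. So this event cannot be ``absorbed by a single, $O(1)$-times use of the $L$-inflation''; avoiding it at every remaining level forces you back to the per-level $L$ charge you already identified as too lossy ($\Theta(\log\log n\cdot\log\log\log n)$ total).

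The paper's missing ingredient is a \emph{higher-multiplicity} collision bound with geometrically growing thresholds. Let $\mathcal{D}^{(l)}_{j,s}$ be the event that one of the current positions was visited at least $l$ times since $T_{j-1}$; iterating Lemma~\ref{lem:alon} over the $l$ successive return times gives
\begin{equation}
\Pr\bigl(\mathcal{D}^{(l)}_{j,s}\,\big|\,\mathbf{W}(T_{j-1}),\mathbf{W}(T_{j-1}-1)\bigr)\leq 2\binom{\T+s}{l}n^{-l\alpha/\log\log n}+\dfrac{4(\T+s)}{n}.
\end{equation}
Taking $l=2^{i-i^*+1}$ and the per-level inflation $r_{i+1}-r_i=2^{i-i^*}+1$, a walker visited fewer than $l$ times gains fewer than $l$ balls, so off $\mathcal{D}^{(l)}_{j,s}$ the product bound $(2\beta_i/n)^2$ still applies, while $\Pr(\mathcal{D}^{(l)}_{j,s})\approx(\T n^{-\alpha/\log\log n})^{l}/l!$ now decays \emph{doubly exponentially} in $i-i^*$, in lockstep with $(\beta_i/n)^2$. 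The accumulated inflation $\sum_{i}(2^{i-i^*}+1)$ telescopes to $\Theta(2^{i^{**}-i^*})=\Theta(\log\log n)$ over the $i^{**}-i^*=\log_2\log\log n+\Theta(1)$ tail levels, and the full $L$-inflation is charged only once at the very end — which is exactly the $O(1)$-charge budget you were aiming for, achieved by a mechanism your proposal does not supply.
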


\begin{proof}
	Define $L:=\lfloor 2\left(\frac{c}{\alpha}\log(k-1)\log\log n + 1 \right) \rfloor$
	
	Define $r_i$ as follows:
	\begin{equation}
	r_i=\begin{cases}
	i&i\leq i^*\\
	r_{i-1} + 2^{i - i^*} + 1 &i^*<i\leq i^{**} \\
	r_{i-1} + L + 1 &i>i^{**}
	\end{cases}
	\end{equation}
	where $i^*, i^{**}\in\mathbb{N}$ is determined later. 
	
	(We have $r_{i^**+1} = i^{**} + 2^{i^{**} - i^* + 1} + L-2 $)
	
	Define the events $\mathcal{E}_i:=\{\nu^{(r_i)}(n)\leq \beta_i \}$ as before.
	
	Define
	\begin{equation}
	I_j^{(i)}(s):=\begin{cases}
	1&\text{if }Q_{W_l(T_j+s)}(T_j+s) \geq r_{i+1}- 1~\forall l=1,2\\
	&\text{and }\nu^{(r_i)}(T_j)\leq \beta_i\\
	0&\text{otherwise}
	\end{cases}
	\end{equation}
	
	Let $c>0$ be as specified in the proof of the third scheme. Define the filtration $\mathcal{G}_j$ to be the same as in Section \ref{sec: 3}.
	
	\begin{lemma}\label{lem: 7}
		\begin{enumerate}[(a)]
			\item For $i<i^*$ and $s = 0,1,\cdots,\T - 1$
			\begin{equation}
			\E[I_j^{(i)}~|~\mathcal{G}_{j-1}]\leq 4\left(\dfrac{\beta_i}{n}\right)^2 + 2(\T+s)n^{-\frac{\alpha}{\log\log n}} + 4(\T+s)n^{-1}
			\end{equation}
			
			\item For $i^*\leq i < i^{**}$ and $s = 0,1,\cdots,\T - 1$
			\begin{equation}
			\E[I_j^{(i)}~|~\mathcal{G}_{j-1}]\leq 4\left(\dfrac{\beta_i}{n}\right)^2 + 2 \binom{\T+s}{2^{i-i^*+1}} n^{-2^{i-i^*+1}\frac{\alpha}{\log\log n}} + 4(\T+s)n^{-1}
			\end{equation}
			
		\end{enumerate}
		
	\end{lemma}
	
	\begin{proof}[Proof of Lemma \ref{lem: 7}]
		The proof of part (a) is identical to the proof of Lemma \ref{lem:key3}.
		
		Define $\mathcal{D}_{j, s}^{(l)}$ to be the event that at least one of the vertices $W_1(T_j+s)$ and $W_2(T_j+s)$ was visited at least $l$ times by the random walkers between time $T_{j-1}$ and $T_j+s$. 
		
		Now, for $i^*\leq i < i^{**}$,
		\begin{equation}
		\begin{split}
		&~~~~\E[I_j^{(i)}(s)~|~\mathbf{Q}(T_{j-1}) = \mathbf{q},~\mathbf{W}(T_{j-1}),~\mathbf{W}(T_{j-1}-1)]\\
		&\leq \Pr(Q_{W_l(T_j+s)}(T_j+s)\geq r_{i+1} - 1~\forall l = 1,2~|~\mathbf{Q}(T_{j-1}) = \mathbf{q},~\mathbf{W}(T_{j-1}),~\mathbf{W}(T_{j-1}-1))\\
		&= \Pr(Q_{W_l(T_j+s)}(T_j+s)\geq r_{i} + 2^{i-i^*+1}~\forall l = 1,2~|~\mathbf{Q}(T_{j-1}) = \mathbf{q},~\mathbf{W}(T_{j-1}),~\mathbf{W}(T_{j-1}-1))\\
		&\leq \Pr(Q_{W_l(T_j+s)}(T_j)\geq r_{i} ~\forall l = 1,2~|~\mathbf{Q}(T_{j-1}) = \mathbf{q},~\mathbf{W}(T_{j-1}),~\mathbf{W}(T_{j-1}-1)) \\
		&~~~+ \Pr(\mathcal{D}_{j,s}^{(2^{i-i^*+1})}~|~\mathbf{Q}(T_{j-1}) = \mathbf{q},~\mathbf{W}(T_{j-1}),~\mathbf{W}(T_{j-1}-1)) \\
		&\leq \Pr(q_{W_l(T_j+s)}(T_j)\geq r_{i} ~\forall l = 1,2~|~\mathbf{W}(T_{j-1}),~\mathbf{W}(T_{j-1}-1)) + \Pr(\mathcal{D}_{j,s}^{(2^{i-i^*+1})}~|~\mathbf{W}(T_{j-1}),~\mathbf{W}(T_{j-1}-1)) \\
		&\leq 4\left(\dfrac{\beta_i}{n}\right)^2 + \Pr(\mathcal{D}_{j,s}^{(2^{i-i^*+1})}~|~\mathbf{W}(T_{j-1}),~\mathbf{W}(T_{j-1}-1)) 
		\end{split}
		\end{equation}
		
		We only need to prove the following claim.
		
		\begin{claim}\label{claim:3}
			\begin{equation}
			\Pr(\mathcal{D}_{j,s}^{(l)}~|~\mathbf{W}(T_{j-1}), \mathbf{W}(T_{j-1} -1)) \leq 2\binom{\T+s}{l} n^{-l\frac{\alpha}{\log\log n}} + 4(\T+s)n^{-1}
			\end{equation}
		\end{claim}
		
		\begin{proof}[Proof of Claim \ref{claim:3}]
			\begin{equation}
			\begin{split}
			&~~~~\Pr(\mathcal{D}_{j, s}^{(l)} ~|~\mathbf{W}(T_{j-1}), \mathbf{W}(T_{j-1} -1) )\\
			&\leq \sum_{-\T \leq r_1<\cdots<r_l<s}\Pr(W_1(T_j+r_1) = \cdots = W_1(T_j+r_l) = W_1(T_j+s)~|~\mathbf{W}(T_{j-1}), \mathbf{W}(T_{j-1} -1))\\
			&+\sum_{-\T \leq r_1<\cdots<r_l<s}\Pr(W_2(T_j+r_1) = \cdots = W_2(T_j+r_l) = W_2(T_j+s)~|~\mathbf{W}(T_{j-1}), \mathbf{W}(T_{j-1} -1))\\
			&+\sum_{r=-\T}^{s-1} \Pr(W_1(T_j+r) = W_1(T_j+s)~|~\mathbf{W}(T_{j-1}), \mathbf{W}(T_{j-1} -1))\\
			&+\sum_{r=-\T}^{s-1} \Pr(W_2(T_j+r) = W_2(T_j+s)~|~\mathbf{W}(T_{j-1}), \mathbf{W}(T_{j-1} -1))\\
			&\leq 2 \sum_{-\T \leq r_1<\cdots<r_l<s}\Pr(W_1(T_j+r_1) = \cdots = W_1(T_j+r_l) = W_1(T_j+s)~|~\mathbf{W}(T_{j-1}), \mathbf{W}(T_{j-1} -1))\\
			&+2\sum_{r=-\T}^{s-1} \Pr(W_1(T_j+r) = W_1(T_j+s)~|~\mathbf{W}(T_{j-1}), \mathbf{W}(T_{j-1} -1))
			\end{split}
			\end{equation}
			
			Using the same proof for Claim \ref{claim:1}, one can obtain
			\begin{equation}
			2\sum_{r=-\T}^{s-1} \Pr(W_1(T_j+r) = W_1(T_j+s)~|~\mathbf{W}(T_{j-1}), \mathbf{W}(T_{j-1} -1)) \leq \dfrac{4(\T+s)}{n}
			\end{equation}
			
			By the girth assumption and Lemma \ref{lem:alon}, we have
			\begin{equation}
			\Pr(W_1(T_j+r_1) = W_1(T_j+r_2)~|~\mathbf{W}(T_{j}+r_1), \mathbf{W}(T_{j}+r_1-1))\leq n^{-\frac{\alpha}{\log\log n}}
			\end{equation}
			
			Hence we have
			\begin{equation}
			\Pr(W_1(T_j+r_1) = \cdots = W_1(T_j+r_l) = W_1(T_j+s)~|~\mathbf{W}(T_{j-1}), \mathbf{W}(T_{j-1} -1))\leq (n^{-\frac{\alpha}{\log\log n}})^l
			\end{equation}
			
			We conclude that
			\begin{equation}
			\Pr(\mathcal{D}_{j, s}^{(l)} ~|~\mathbf{W}(T_{j-1}), \mathbf{W}(T_{j-1} -1) ) \leq 2\binom{\T+s}{l} n^{-l\frac{\alpha}{\log\log n}} +  \dfrac{4(\T+s)}{n}
			\end{equation}
		\end{proof}
		
		Given the claim, the rest of the proof is identical to that of Lemma \ref{lem:key3}.
	\end{proof}
	
	Let $N$ be the smallest even integer greater than $\frac{n}{\lfloor c\log n\rfloor}$. For large $n$ we have $N\leq \frac{en}{2c\log n}$. We have $T_N\geq n$. Thus $\sum_{j=0}^{N-1} Z_l^{(j)} \geq \mu^{(r_i+1)}(n)$ holds on $\mathcal{E}_i$.
	
	Let $i^*>18$ be determined later. Set $\beta_{18} = \dfrac{n}{6e}$. Then $\nu^{(18)}(n)\leq \beta_{18}$ is always true. Hence $\Pr(\mathcal{E}_{18}) = 1$.
	
	For $i\geq 18$ we have
	\begin{equation}
	\begin{split}
	\Pr(\mathcal{E}_{i+1}^c\cap \mathcal{E}_i) \leq \Pr\left(\sum_{j=0}^{\frac{N}{2}-1} Z_{2j}^{(i)}> \dfrac{\beta_{i+1}}{2}\right) + \Pr\left(\sum_{j=0}^{\frac{N}{2}-1} Z_{2j+1}^{(i)}> \dfrac{\beta_{i+1}}{2}\right)
	\end{split}
	\end{equation}
	as before in the proof for Theorem \ref{thm:3works}.
	
	Now, for $18 \leq i <i^*-1$, set $\beta_{i+1} = 5en\left(\frac{\beta_i}{n} \right)^2$, where $i^*$ is the smallest $i$ such that
	\begin{equation}
	\left(\dfrac{\beta_{i-1}}{n} \right)^2 < (9c\log n)n^{-\frac{\alpha}{2\log\log n}}
	\end{equation}
	It can be shown that $i^*\leq \log_2\log n + \Theta(1)$.
	
	For $18\leq i<i^*-1$, let $n$ be such that $\log\log n \geq \alpha$, we have
	\begin{equation}
	\left(\dfrac{\beta_i}{n}\right)^2 \geq 9(c\log n) n^{-\frac{\alpha}{2\log\log n}} \geq 3(c\log n)n^{-\frac{\alpha}{\log\log n}} + 6(c\log n)n^{-1}
	\end{equation}
	
	Thus for $18\leq i<i^*-1$, 
	\begin{equation}
	\begin{split}
	\E[Z_j^{(i)}~|~\mathcal{G}_{j-1}] &\leq \sum_{s=0}^{\T-1} \left[4\left(\dfrac{\beta_i}{n}\right)^2 + 2(\T+s)n^{-\frac{\alpha}{\log\log n}} + 4(\T+s)n^{-1} \right]\\
	&\leq 4c\log n\left(\dfrac{\beta_i}{n}\right)^2 + 3(c\log n)n^{-\frac{\alpha}{\log\log n}} + 6(c\log n)n^{-1}\\
	&\leq 5c\log n\left(\dfrac{\beta_i}{n}\right)^2=:m_i
	\end{split}
	\end{equation}
	
	For $18\leq i < i^*-1$ we check that $\beta_{i+1}\geq 2Nm_i$. Hence applying Corollary \ref{berncor} we obtain
	\begin{equation}
	\begin{split}
	\Pr(\mathcal{E}_{i+1}^c \cap \mathcal{E}_i)\leq 2\exp\left(-\dfrac{3(\beta_{i+1}/2)}{16c\log n} \right) \leq 2\exp\left(-\dfrac{135e}{32}n^{1-\frac{\alpha}{2\log\log n}}\right)
	\end{split}
	\end{equation}
	
	Now we have
	\begin{equation}
	\begin{split}
	\E[Z_j^{(i^*-1)}~|~\mathcal{G}_{j-1}] &\leq \sum_{s=0}^{\T-1} \left[4\left(\dfrac{\beta_{i^*-1}}{n}\right)^2 + 2(\T+s)n^{-\frac{\alpha}{\log\log n}  } + 4(\T+s) n^{-1}\right]\\
	&\leq 4c\log n \left(\dfrac{\beta_{i^*-1}}{n}\right)^2 + 3(c\log n)^2 n^{-\frac{\alpha}{\log\log n} } + 6(c\log n)^2 n^{-1}\\
	&\leq 36(c\log n)^2 n^{-\frac{\alpha}{2\log\log n}} + 3(c\log n)^2 n^{-\frac{\alpha}{\log\log n} } + 6(c\log n)^2 n^{-1}\\
	&\leq 45(c\log n)^2 n^{-\frac{\alpha}{2\log\log n}} =:m_{i^*-1}
	\end{split}
	\end{equation}
	
	Set $\beta_{i^*} = 45e(c\log n)n^{1-\frac{\alpha}{2\log\log n}}$. We have $\beta_{i^*} \geq 2Nm_{i^*-1}$. Applying Corollary \ref{berncor} again we have
	\begin{equation}
	\begin{split}
	\Pr(\mathcal{E}_{i^*}^c\cap \mathcal{E}_{i^*-1}) \leq 2\exp\left(-\dfrac{135e}{32}n^{1-\frac{\alpha}{2\log\log n}}\right)
	\end{split}
	\end{equation}
	
	Now, for $i^*\leq i < i^{**}-1$, set $\beta_{i+1} = 6en\left(\frac{\beta_i}{n}\right)^2$, where $i^{**}$ is the smallest $i\geq i^*+1$ such that
	\begin{equation}
	6en\left(\frac{\beta_{i-1}}{n}\right)^2 \leq 16c(\log n)^2
	\end{equation}
	(As a result, $\beta_i \geq 16c(\log n)^2$ for $i^*\leq i < i^{**}$)
	
	It can be shown that $i^{**} - i^* \leq \log_2\log \log n + \Theta(1)$. Therefore
	\begin{equation}
	(2\T)^{2^{i^{**} - i^*+1}} = O(\exp((\log\log n)^2)) = o\left(\exp\left(\dfrac{\alpha\log n}{\log\log n}\right)\right)= o(n^{\frac{\alpha}{\log\log n}})
	\end{equation}
	
	Therefore, there exist $\overline{n}\in\mathbb{N}$ be such that
	\begin{equation}\label{hbindbase}
	\left(\dfrac{\beta_{i^*}}{n}\right)^2 = (45e)^2(c\log n)^2 n^{-\frac{\alpha}{\log\log n}}\geq 2\cdot (2\T)^{2^{i^{**} - i^*+1}} n^{-\frac{2\alpha}{\log\log n}}
	\end{equation}
	for all $n\geq \overline{n}$.
	
	\begin{claim}
		$\left(\dfrac{\beta_{i}}{n}\right)^2 \geq 2\cdot (2\T)^{2^{i^{**} - i^*+1}} n^{-2^{i-i^*+1}\frac{\alpha}{\log\log n}}$ holds for all $i^*\leq i < i^{**}$ and $n\geq \overline{n}$.
	\end{claim}
	
	\begin{proof}[Proof of Claim]
		Proof by induction. We have already established the induction base in \eqref{hbindbase}.
		
		Now, suppose the result holds for $i$. we then have
		\begin{equation}
		\beta_{i+1} = 6en\left(\dfrac{\beta_{i}}{n}\right)^2 \geq 12en\cdot (2\T)^{2^{i^{**} - i^*+1}} n^{-2^{i-i^*+1}\frac{\alpha}{\log\log n}}
		\end{equation}
		hence
		\begin{align*}
		\left(\frac{\beta_{i+1}}{n}\right)^2 &\geq \left(12e\cdot (2\T)^{2^{i^{**} - i^*+1}} n^{-2^{i-i^*+1}\frac{\alpha}{\log\log n}}\right)^2\\
		&=\left(12e\cdot (2\T)^{2^{i^{**} - i^*+1}}\right)^2 n^{-2^{i-i^*+2}\frac{\alpha}{\log\log n}}\\
		&\geq  2\cdot (2\T)^{2^{i^{**} - i^*+1}} n^{-2^{(i+1)-i^*+1}\frac{\alpha}{\log\log n}}
		\end{align*}
		establishing the induction step.
	\end{proof}
	
	As a result of the claim, we have
	\begin{equation}
	\left(\dfrac{\beta_{i}}{n}\right)^2 \geq 2\cdot \binom{\T+s}{2^{i-i^*+1}}  n^{-2^{i-i^*+1}\frac{\alpha}{\log\log n}}
	\end{equation}
	for all $i^*\leq i < i^{**}$ and $s = 0 ,1,\cdots, \T-1$ for $n\geq \overline{n}$.
	
	Assume that $\overline{n}$ is sufficiently large, we also have
	\begin{equation}
	\left(\dfrac{\beta_{i}}{n}\right)^2 \geq \dfrac{16c(\log n)^2}{6en} \geq  \dfrac{4(\T+s)}{n}
	\end{equation}
	for $i^*\leq i < i^{**}-1$ and $s = 0 ,1,\cdots, \T-1$ for $n\geq \overline{n}$.
	
	Therefore, for $i^*\leq i<i^{**}-1$, we have
	\begin{equation}
	\begin{split}
	\E[Z_j^{(i)}~|~\mathcal{G}_{j-1}]&\leq \sum_{s=0}^{\T-1} \left[4\left(\dfrac{\beta_{i}}{n}\right)^2 + 2 \binom{\T+s}{2^{i-i^*+1}}  n^{-2^{i-i^*+1}\frac{\alpha}{\log\log n}} + \dfrac{4(\T+s)}{n} \right]\\
	&\leq \sum_{s=0}^{\T-1} 6\left(\dfrac{\beta_{i}}{n}\right)^2 \leq 6(c\log n) \left(\dfrac{\beta_{i}}{n}\right)^2 =:m_i
	\end{split}
	\end{equation}
	
	We check that $\beta_{i+1}\geq 2Nm_i$. Applying Corollary \ref{berncor} again we have
	\begin{equation}
	\begin{split}
	\Pr(\mathcal{E}_{i+1}^c\cap \mathcal{E}_i) &\leq 2\exp\left(-\dfrac{3(\beta_{i+1}/2)}{16c\log n}\right) \leq 2\exp\left(-\dfrac{24c(\log n)^2}{16c\log n}\right)\\ 
	&= 2n^{-1.5}
	\end{split}
	\end{equation}
	for $i^* \leq i < i^{**} - 1$.
	
	Define $\beta_{i^{**}} = 16c(\log n)^2 \geq 6en\left(\frac{\beta_{i^{**}-1}}{n} \right)^2$ we have
	\begin{equation}
	\begin{split}
	\E[Z_j^{(i^{**} - 1)}~|~\mathcal{G}_{j-1}]&\leq \sum_{s=0}^{\T-1} \left[4\left(\dfrac{\beta_{i^{**} - 1}}{n}\right)^2 + 2 \binom{\T+s}{2^{i^{**} -i^*}}  n^{-2^{i^{**}-i^*}\frac{\alpha}{\log\log n}} + \dfrac{4(\T+s)}{n} \right]\\
	&\leq \sum_{s=0}^{\T-1} \left[5\left(\dfrac{\beta_{i^{**} - 1}}{n}\right)^2 + \dfrac{4(\T+s)}{n} \right]\\
	&\leq \sum_{s=0}^{\T-1} \left[5\cdot \dfrac{16c(\log n)^2}{6en}  + \dfrac{4(\T+s)}{n} \right]\\
	&\leq \sum_{s=0}^{\T-1} \left[6\cdot \dfrac{16c(\log n)^2}{6en}  \right]~~~~~~\text{(for $n\geq \overline{n}$)}\\
	&\leq (c\log n)(16ec(\log n)^2) =: m_{i^{**}-1}
	\end{split}
	\end{equation}
	
	We check that $\beta_{i^{**}}\geq 2Nm_{i^{**}-1}$. Hence we apply Corollary \ref{berncor} again and obtain
	\begin{equation}
	\begin{split}
	\Pr(\mathcal{E}_{i^{**}}^c\cap \mathcal{E}_{i^{**}-1}) &\leq 2\exp\left(-\dfrac{3(\beta_{i^{**}}/2)}{16c\log n}\right) = 2\exp\left(-\dfrac{24c(\log n)^2}{16c\log n}\right)\\ 
	&= 2n^{-1.5}
	\end{split}
	\end{equation}
	
	Set $\beta_{i^{**} + 1} = 0.8$, , by Markov Inequality we have
	\begin{equation}
	\begin{split}
	&~~~~\Pr(\mathcal{E}_{i^{**}+1}^c \cap \mathcal{E}_{i^{**}})\\
	&\leq \Pr\left(\sum_{j=0}^{N-1} Z_j^{(i^{**})} \geq \beta_{i^{**}+1}\right)\leq \dfrac{\E\left[\sum_{j=0}^{N-1} Z_j^{(i^{**})} \right] }{\beta_{i^{**}+1}}\\&\leq \dfrac{Nc\log n \left(\frac{\beta_{i^{**}}}{n}\right)^2 }{\beta_{i^{**}+1}}\leq\dfrac{\frac{en}{2} \left(\frac{\beta_{i^{**}}}{n}\right)^2 }{\beta_{i^{**}+1}}=160e\cdot\dfrac{c^2(\log n)^4}{n}
	\end{split}
	\end{equation}
	
	Now we conclude that
	\begin{align}
	\Pr(\mathcal{E}_{i^{**}+1}^c ) &= \Pr(\mathcal{E}_{i^{**}+1}^c\cap \mathcal{E}_{18})\leq \Pr\left(\bigcup_{i=18}^{i^{**}}(\mathcal{E}_{i+1}^c\cap \mathcal{E}_i) \right)\leq \sum_{i=18}^{i^{**}}\Pr(\mathcal{E}_{i+1}^c\cap \mathcal{E}_i)\\
	&\leq (i^*-18)\cdot 2\exp\left(-\dfrac{135e}{32}n^{1-\frac{\alpha}{2\log\log n}}\right) + (i^{**} - i^*)\cdot 2n^{-1.5} + 160e\cdot\dfrac{c^2(\log n)^4}{n}\\
	&=o(1)
	\end{align}
	
	Therefore, with high probability, $\mathcal{E}_{i^{**}+1}$ is true, that is, no bin has load exceeding 
	\begin{equation}
	r_{i^**+1} = i^{**} + 2^{i^{**} - i^* + 1} + L-2 = \Theta(\log_2\log n)
	\end{equation}
	proving the result.
\end{proof}

\section{Conclusion}

In this paper we proposed and analyzed three schemes for ball-in-bins allocation. All three schemes are variants of the \emph{power-of-$d$} scheme, with bins sampled through $d$ random walk processes on some underlying graph. We show that both schemes can yield the same performance as power-of-$d$, that is, the maximum load is bounded by $\log\log n/\log d+\Theta(1)$ with high probability. Both schemes can be considered as a \emph{derandomized}, or \emph{pseudo-random} version of \emph{power-of-$d$} scheme.

This paper opens several future works. First, matching lower bounds for the proposed two schemes are needed. Secondly, as it is well known that when $d=\Theta(\log n)$, the maximum load is $\Theta(1)$ with high probability, it is natural to ask the question whether the maximum load will be $\Theta(1)$ with high probability when $d=\Theta(\log n)$ in both proposed schemes. Finally, the analysis of performance of the schemes in queuing system settings are needed.

\bibliographystyle{plain}
\bibliography{mybib}

\newpage
\appendix
\section{Appendix}

\begin{proof}[Proof of Lemma \ref{lem:nreset}]
	Let $p:=\Pr(T_1 = \lfloor c\log n\rfloor )$. Since $T_1\leq \lfloor c\log n\rfloor$ almost surely, condition (\ref{nllrst}) implies that $p\geq 0.9$.
	
	Let $\tau_j:=T_{j+1} - T_j$ for $j=0,1,2,\cdots, N-1$. $\tau_j$ are i.i.d. with $\tau_1\equiv T_1$. We have
	\begin{equation}
	\begin{split}
	\E[\tau_j]=\E[T_1] \geq p\lfloor c\log n\rfloor
	\end{split}
	\end{equation}
	and
	\begin{equation}
	\begin{split}
	\Var[\tau_j] &= \E[\tau_j^2] - (\E[\tau_j])^2\\
	&\leq (\lfloor c\log n \rfloor)^2 - p^2 (\lfloor c\log n \rfloor)^2\\
	&=(1-p^2) (\lfloor c\log n \rfloor)^2
	\end{split}
	\end{equation}
	
	We have
	\begin{equation}\label{21}
	\begin{split}
	\Pr(T_N \leq n) &= \Pr\left(\sum_{j=0}^{N-1} \tau_j \leq n \right)\\
	&=\Pr\left(\sum_{j=0}^{N-1} (\tau_j - \E[\tau_j]) \leq -(N\E[\tau_1] -n) \right)\\
	&\leq \Pr\left(\sum_{j=0}^{N-1} (\tau_j - \E[\tau_j]) \leq -(Np\lfloor c\log n\rfloor -n) \right)
	\end{split}
	\end{equation}
	
	For $n\geq \exp(\frac{10}{c})$, we have $0.1c\log n \geq 1$, hence
	\begin{equation}
	\lfloor c\log n\rfloor \geq c\log n - 1 \geq 0.9 c\log n
	\end{equation}
	
	Let $0<\alpha< 1$. For $n\geq 3 \vee \left(\dfrac{40c}{\alpha e}\right)^{1/(1-\alpha)}$, we have $\dfrac{n}{\log n}\geq n^{1-\alpha} \cdot \dfrac{e^{\alpha \log n}}{\log n} \geq \alpha n^{1-\alpha} \geq \dfrac{40 c}{e}$, hence
	\begin{equation}
	N \geq \dfrac{en}{2c\log n}-1\geq \dfrac{en}{2c\log n} - \dfrac{en}{40c\log n}=\dfrac{19en}{40c\log n}
	\end{equation}
	
	Hence under the above condition for $n$, we have
	\begin{equation}
	Np\lfloor c\log n\rfloor - n \geq 0.9\cdot\dfrac{171e n}{400} - n\geq \dfrac{n}{25}
	\end{equation}
	
	Now, apply Bernstein Inequality we have
	\begin{equation}\label{22}
	\begin{split}
	&~~~~\Pr\left(\sum_{j=0}^{N-1} (\tau_j - \E[\tau_j]) \leq -(Np\lfloor c\log n\rfloor -n) \right)\\
	&\leq \Pr\left(\sum_{j=0}^{N-1} (\tau_j - \E[\tau_j]) \leq -\dfrac{n}{25} \right)\\
	&\leq \exp\left(-\dfrac{(n/25)^2}{2N(1-p^2)(\lfloor c\log n\rfloor)^2 + \frac{2}{3}\lfloor c\log n\rfloor (n/25)} \right)\\
	&\leq \exp\left(-\dfrac{(n/25)^2}{\dfrac{en}{c\log n}(1-p^2)( c\log n)^2 + \frac{2}{3} c\log n\cdot (n/25)} \right)\\
	&=\exp\left(-\dfrac{n^2/625}{e(1-p^2)nc\log n + \frac{2}{75} nc\log n\cdot } \right)\\
	&=\exp\left(-\dfrac{1}{625\left[e(1-p^2) + \dfrac{2}{75}\right]}\cdot \dfrac{n}{c\log n} \right)\\
	&\leq \exp\left(-\dfrac{n}{340c\log n}\right)
	\end{split}
	\end{equation}
	
	Hence we conclude from \eqref{21}\eqref{22} that $\Pr(A^c) \leq \exp\left(-\dfrac{n}{340c\log n}\right)$.
\end{proof}

\begin{proof}[Proof of Lemma \ref{lem:mixing}]
	The mixing time result we need is slightly different from the one that Alon et al. proved. We also need a more precise result. Hence we provide the proof here.
	
	Let $A_{u,v}^{(t)}$ denote the number of non-backtracking walks of length $t$ from $u$ to $v$. Then we have the recurrence relation on matrices
	\begin{align*}
	A^{(1)} &= A\\
	A^{(2)} &= A^2 - dI\\
	A^{(t+1)}&= AA^{(t)} - (d-1)A^{(t-1)}~~~\forall t\geq 2
	\end{align*}
	
	Define
	\begin{equation}
	\tilde{P}^{(t)}=\dfrac{A^{(t)}}{k(k-1)^{t-1}}
	\end{equation}
	
	$\tilde{P}^{(t)}$ is the $t$-step transition probability matrix of a non-backtracking random walk, where the first step of the walk is not required to avoid any neighbor of the starting vertex. Let $\mu_1(t) = 1, \mu_2(t),\cdots, \mu_n(t)$ denote the eigenvalues of $\tilde{P}^{(t)}$ and $\mu(t):=\max\{|\mu_2(t)|,\cdots,|\mu_n(t)| \}$. 
	
	Alon et al. proved in their Claim 2.2 that
	\begin{equation}\label{expander0}
	\max_{u,v}\left|\tilde{P}_{uv}^{(t)}-\dfrac{1}{n} \right|\leq \mu(t)
	\end{equation}
	for every $n$ and $t$.
	
	Let $k=\lambda_1^{(n)}\geq \lambda_2^{(n)}\geq \cdots\geq \lambda_n^{(n)}$ be the eigenvalues of the adjacency matrix of the expander graph $G^{(n)}$. Let $\lambda$ be as defined in Definition \ref{def:expander}. Pick $\tilde{\lambda}\in (\lambda, k)$. We have
	\begin{equation}\label{expander1}
	\max\{|\lambda_2^{(n)}|,\cdots, |\lambda_n^{(n)}|\} \leq \tilde{\lambda}
	\end{equation}
	for all large $n$.
	
	Alon et al. have proved that
	\begin{equation}\label{expander2}
	\mu_i(t) = \dfrac{1}{\sqrt{k(k-1)^{t-1}}}q_t\left(\dfrac{\lambda_i^{(n)}}{2\sqrt{k-1}} \right)
	\end{equation}
	
	where $q_t(x)$ is a polynomial satisfying
	\begin{align*}
	q_{1}(x) &= \sqrt{\dfrac{k-1}{k}} \\
	q_{2}(x)& = \sqrt{\dfrac{k-1}{k}}(4x^2-1) - \dfrac{1}{\sqrt{k(k-1)}}\\
	q_{t+1}(x)&=2xq_t(x) - q_{t-1}(x)~~~~~~\forall t\geq 2
	\end{align*}
	
	It can also be shown (through induction) that, if $U_t(x)$ is the Chebyshev polynomial of the second kind, i.e.
	\begin{align*}
	U_t(\cos\theta) = \dfrac{\sin((t+1)\theta)}{\sin(\theta)}
	\end{align*}
	then
	\begin{align}\label{qk2ndform}
	q_t(x) = \sqrt{\dfrac{k-1}{k}}U_t(x) - \dfrac{1}{\sqrt{k(k-1)}}U_{t-2}(x)
	\end{align}
	
	\begin{claim}\label{claim:0}
		The polynomials $q_t(x)$ can be bounded by
		\begin{equation}\label{expander3}
		q_t(x)\leq \sqrt{\dfrac{k-1}{k}}(t+1)[\psi(|x|)]^{t} + \dfrac{1}{\sqrt{k(k-1)}}(t-1)[\psi(|x|)]^{t-2}
		\end{equation}
		where $\psi:\mathbb{R}_+\mapsto \mathbb{R}_+$ is defined as
		\begin{equation}
		\psi(x)=\begin{cases}
		1&x\leq 1\\
		x+\sqrt{x^2-1}&x\geq 1
		\end{cases}
		\end{equation}
	\end{claim}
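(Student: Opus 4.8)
The plan is to derive \eqref{expander3} from a single elementary estimate on the Chebyshev polynomials of the second kind, namely
$$|U_t(x)|\ \le\ (t+1)\,[\psi(|x|)]^{t}\qquad\text{for all }t\ge 1\text{ and all }x\in\mathbb{R}.$$
Granting this, the closed form \eqref{qk2ndform} together with the triangle inequality gives at once
$$q_t(x)\ \le\ \sqrt{\tfrac{k-1}{k}}\,|U_t(x)|+\tfrac{1}{\sqrt{k(k-1)}}\,|U_{t-2}(x)|\ \le\ \sqrt{\tfrac{k-1}{k}}(t+1)[\psi(|x|)]^{t}+\tfrac{1}{\sqrt{k(k-1)}}(t-1)[\psi(|x|)]^{t-2},$$
which is exactly the asserted bound; note that only an upper bound on $q_t$ is needed, so the signs of $U_t$ and $U_{t-2}$ are irrelevant, and for $t=1$ the second term simply vanishes. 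Thus the whole claim reduces to the displayed inequality for $|U_t|$.

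To prove that inequality I would first use the parity relation $U_t(-x)=(-1)^tU_t(x)$ to reduce to $x\ge 0$ (the right-hand side already depends only on $|x|$), and then treat separately the two ranges on which $\psi$ takes its two forms. For $0\le x\le1$, write $x=\cos\theta$ with $\theta\in[0,\pi/2]$ so that $U_t(x)=\sin((t+1)\theta)/\sin\theta$ (the value $t+1$ being taken when $\theta=0$); a one-line induction on $t$ via $\sin((t+1)\theta)=\sin(t\theta)\cos\theta+\cos(t\theta)\sin\theta$ shows $|\sin((t+1)\theta)|\le(t+1)|\sin\theta|$, hence $|U_t(x)|\le t+1=(t+1)[\psi(|x|)]^{t}$ since $\psi\equiv1$ on $[0,1]$.

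For $x\ge1$ I would use the Binet form of the Chebyshev recursion $U_{t+1}=2xU_t-U_{t-1}$: writing $\alpha=x+\sqrt{x^2-1}$ and $\beta=x-\sqrt{x^2-1}$ for the roots of $z^2-2xz+1=0$, we have $\alpha\beta=1$, $\alpha\ge1\ge\beta>0$, and $\alpha=\psi(x)$ on this range, and moreover $U_t(x)=\sum_{j=0}^{t}\alpha^{t-j}\beta^{j}$. Since $0<\beta\le\alpha$, every one of the $t+1$ summands is positive and at most $\alpha^{t}$, so $0<U_t(x)\le(t+1)\alpha^{t}=(t+1)[\psi(x)]^{t}$, which finishes the estimate on $|U_t|$ and hence the claim.

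The argument is routine throughout; the only minor care points are the matching of the two cases at $x=1$ (where both give $U_t(1)=t+1$ and $\psi(1)=1$) and keeping track of the degenerate $t=1$ term in \eqref{expander3}. I do not foresee any genuine obstacle.
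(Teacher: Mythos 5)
Your proof is correct and takes essentially the same route as the paper's: both reduce the claim via \eqref{qk2ndform} and the triangle inequality to the estimate $|U_t(x)|\leq (t+1)[\psi(|x|)]^{t}$, established for $|x|\leq 1$ by the inductive bound $|\sin(t\theta)|\leq t|\sin\theta|$ and for $|x|>1$ by the exponential representation (your Binet/geometric-sum argument is the same computation as the paper's bound $|\sinh(t\theta)/\sinh(\theta)|\leq t\,e^{(t-1)\theta}$). No gaps.
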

	
	Given the claim, observing that the right-hand-side of (\ref{expander3}) is an increasing function of $|x|$, combining \eqref{expander1}\eqref{expander2}\eqref{expander3} we have
	\begin{align*}
	\mu(t)&\leq \dfrac{1}{\sqrt{k(k-1)^{t-1}}}\left( \sqrt{\dfrac{k-1}{k}}(t+1)\left[\psi\left(\dfrac{\tilde{\lambda}}{2\sqrt{k-1}} \right)\right]^{t} + \dfrac{1}{\sqrt{k(k-1)}}(t-1)\left[\psi\left(\dfrac{\tilde{\lambda}}{2\sqrt{k-1}} \right)\right]^{t-2} \right)\\
	&=\dfrac{k-1}{k}(t+1)\beta^t + \dfrac{1}{k(k-1)}(t-1)\beta^{t-2}
	\end{align*}
	where
	\begin{align*}
	\beta := \dfrac{1}{\sqrt{k-1}}\psi\left(\dfrac{\tilde{\lambda}}{2\sqrt{k-1}} \right) < \dfrac{1}{\sqrt{k-1}}\psi\left(\dfrac{k}{2\sqrt{k-1}} \right)=1
	\end{align*}
	
	Thus we obtain a uniform estimate
	\begin{equation}
	\max_{u,v}\left|\tilde{P}_{uv}^{(t)}-\dfrac{1}{n} \right|\leq \dfrac{k-1}{k}(t+1)\beta^t + \dfrac{1}{k(k-1)}(t-1)\beta^{t-2}
	\end{equation}
	observe that the right-hand-side is monotonically decreasing in $t$ for large $t$.
	
	Pick $c = -\dfrac{2}{\log \beta} > 0$ and set $\tau = \lfloor c\log n\rfloor$, for sufficiently large $n$ we have
	\begin{equation}
	\max_{u,v}\left|\tilde{P}_{uv}^{(t)}-\dfrac{1}{n} \right|\leq \dfrac{k-1}{k}(\tau+1)\beta^\tau + \dfrac{1}{k(k-1)}(\tau-1)\beta^{\tau-2}=\Theta\left(\dfrac{\log n}{n^2}\right)~~~~~~\forall t\geq \tau
	\end{equation}
	
	Hence, for sufficiently large $n$, we have
	\begin{equation}\label{expander99}
	\max_{u,v}\left|\tilde{P}_{uv}^{(t)}-\dfrac{1}{n} \right|\leq \left(1-\dfrac{2}{k}\right)\dfrac{1}{n}~~~~~~\forall t\geq \tau
	\end{equation}
	
	The last step is to bound $\tilde{P}_{u_1, v, u_0}^{(t)}:=\Pr(V^{(n)}(t+1)=v~|V^{(n)}(0) = u_0, V^{(n)}(1) = u_1)$ via $\tilde{P}_{u_1, v}^{(t)}$: Let $A_{u_1, v, u_0}^{(t)}$ be the number of non-backtracking walks of length $t$ from $u_1$ to $v$ such that the second visited vertex is not $u_0$. We have
	\begin{equation}
	\tilde{P}_{u_1, v, u_0}^{(t)} = \dfrac{A_{u_1, v, u_0}^{(t)}}{(k-1)^t}\leq \dfrac{A_{u_1, v}^{(t)}}{(k-1)^{t}}=\dfrac{k}{k-1}\tilde{P}_{u_1, v}^{(t)}
	\end{equation}
	
	Thus
	\begin{equation}\label{expander100}
	\tilde{P}_{u_1,v}^{(t)} \leq \left(2-\dfrac{2}{k}\right)\dfrac{1}{n}~~~~~~\Rightarrow~~~~~~\tilde{P}_{u_1, v, u_0}^{(t)}\leq \dfrac{2}{n}
	\end{equation}
	
	Combining \eqref{expander99} and \eqref{expander100} we prove the result.
	
	\begin{proof}[Proof of Claim \ref{claim:0}]
		The proof is similar to the proof of Lemma 2.3 by Alon et al.
		
		Using the second form of $q_k$ in \eqref{qk2ndform}, we have
		\begin{align*}
		q_t(x) = \begin{cases}
		\sqrt{\dfrac{k-1}{k}}\dfrac{\sin((t+1)\theta)}{\sin(\theta)} - \dfrac{1}{\sqrt{k(k-1)}} \dfrac{\sin((t-1)\theta)}{\sin(\theta)}&|x|\leq 1\\
		\mathrm{sgn}(x)^t\left[\sqrt{\dfrac{k-1}{k}}\dfrac{\sinh((t+1)\theta)}{\sinh(\theta)} - \dfrac{1}{\sqrt{k(k-1)}} \dfrac{\sinh((t-1)\theta)}{\sinh(\theta)}\right]&|x|>1
		\end{cases} 
		\end{align*}
		where
		\begin{itemize}
			\item $\theta = \arccos(x)$ if $|x|\leq 1$
			\item $\theta = \mathrm{arccosh}(|x|):=\log(|x|+\sqrt{x^2-1})$ if $|x|>1$.
		\end{itemize}
		
		It can be shown through induction (by expanding $\sin(x+y)$) that
		\begin{align*}
		|\sin(t\theta)|\leq t|\sin\theta|~~~~~~\forall t\in\mathbb{N}
		\end{align*}
		
		We also have for $\theta > 0$
		\begin{align*}
		\left|\dfrac{\sinh(t\theta)}{\sinh(\theta)}\right| &= \left|\dfrac{e^{t\theta} -e^{-t\theta}  }{e^{\theta} -e^{-\theta}}\right| = e^{-(t-1)\theta} \left|\dfrac{e^{2t\theta} -1  }{e^{2\theta} -1}\right|\\
		&=e^{-(t-1)\theta}\left|\sum_{l=0}^{t-1} e^{2l\theta}\right|\leq e^{-(t-1)\theta}\cdot te^{2(t-1)\theta}\\
		&=t\cdot e^{(t-1)\theta} = t\cdot \psi(|x|)^{t-1}~~~~~~\forall t\in\mathbb{N}
		\end{align*}
		
		Thus we conclude that
		\begin{equation}
		|q_t(x)| \leq \begin{cases}
		\sqrt{\dfrac{k-1}{k}}(t+1) + \dfrac{1}{\sqrt{k(k-1)}}(t-1)&|x|\leq 1\\
		\sqrt{\dfrac{k-1}{k}}(t+1)[\psi(|x|)]^{t} + \dfrac{1}{\sqrt{k(k-1)}}(t-1)[\psi(|x|)]^{t-2}&|x|>1
		\end{cases} 
		\end{equation}
		
	\end{proof}
\end{proof}

\end{document}